\documentclass[times]{amsart}
\usepackage[utf8]{inputenc}
\usepackage[left=2.5cm,right=2.5cm, top=1cm,bottom=2.3cm,bindingoffset=0.5cm]{geometry}
\usepackage{ytableau}
\usepackage{xcolor}
\usepackage{wrapfig}

\usepackage{amscd, amsmath, amsfonts, amssymb, amsthm, fancyhdr, epsfig}
\usepackage[all,cmtip]{xy}
\usepackage{tikz}
\usetikzlibrary{decorations.pathreplacing}
\usepackage{bbm}
\usepackage{mathtools}
\usetikzlibrary{patterns}

\newcommand{\h}{{\mathfrak h}}
\newcommand{\gl}{{\mathfrak {gl}}}
\newcommand{\ssl}{{\mathfrak {sl}}}
\newcommand{\Z}{{\mathbb Z}}
\newcommand{\N}{{\mathbb N}}
\newcommand{\Q}{{\mathbb Q}}
\newcommand{\C}{{\mathbb C}}

\newcommand{\cf}{\mathcal F}
\newcommand{\cc}{\mathcal C}
\newcommand{\cp}{\mathcal P}
\newcommand{\ck}{\mathbbm{k}}
\newcommand{\one}{\mathbbm{1}}
\renewcommand{\bar}{\overline}
\newcommand{\Rep}{\operatorname{Rep}}

\newcommand{\Ker}{\operatorname{Ker}}
\newcommand{\colim}{\operatorname{colim}}

\renewcommand{\dim}{\operatorname{dim}}
\renewcommand{\deg}{\operatorname{deg}~}

\newcommand{\End}{\operatorname{End}}
\newcommand{\Hom}{\operatorname{Hom}}
\newcommand{\Ind}{\operatorname{Ind}}
\newcommand{\hc}{{HC}}
\newcommand{\nn}{{(n)}}
\newcommand{\tens}[1]{%
  \mathbin{\mathop{\otimes}\displaylimits_{#1}}%
}

\renewcommand{\l}{\lambda}
\newcommand{\bl}{{\boldsymbol\l}}
\newcommand{\bm}{{\boldsymbol\mu}}
\newcommand{\bn}{{\boldsymbol\nu}}
\newcommand{\bF}{{\boldsymbol F}}

\def\blacksquare{\hbox{\vrule width 5pt height 5pt depth 0pt}}

\theoremstyle{plain}
\newtheorem{theorem}{Theorem}[section]

\newtheorem{claim}[theorem]{Claim}
\newtheorem{hyp}[theorem]{Conjecture}
\newtheorem{lemma}[theorem]{Lemma}
\newtheorem{definition}[theorem]{Definition}
\newtheorem{corollary}[theorem]{Corollary}
\newtheorem{proposition}[theorem]{Proposition}

\theoremstyle{definition}
\newtheorem{remark}[theorem]{Remark}

\newtheorem{ex}[theorem]{Example}

\numberwithin{equation}{theorem}

\begin{document}

\title{Harish-Chandra bimodules of finite K-type in Deligne categories}

\author{Alexandra Utiralova\\ with an Appendix by Serina Hu and Alexandra Utiralova}

\begin{abstract}
	We continue the study of Harish-Chandra bimodules in the setting of the Deligne categories $\Rep(G_t)$, that we started in \cite{U}. In this work we construct a family of Harish-Chandra bimodules that generalize simple finite dimensional bimodules in the classical case. It turns out that they have finite $K$-type, which is a non-vacuous condition for the Harish-Chandra bimodules in $\Rep(G_t)$. The full classification of (simple) finite $K$-type bimodules is yet unknown.
	
	This construction also yields some examples of central characters $\chi$ of the universal enveloping algebra $U(\mathfrak{g}_t)$ for which the quotient $U_\chi$ is not simple, and, thereby, it allows us to partially solve Problem 3.23 posed in \cite{E}.
\end{abstract}	
	
\maketitle

\tableofcontents
\section{Introduction}

In this work we provide a construction of a family of Harish-Chandra bimodules in the setting of Deligne categories $\Rep(G_t)$, that generalize simple finite dimensional bimodules $\Hom_\ck(V_\bm,V_\bl)$ in the classical case.

\subsection{Deligne categories}
The Deligne categories are the interpolation of the categories of representations of classical groups (e.g. general linear group $GL_n,$ orthogonal group $O_n$, symplectic group $Sp_{2n},$ symmetric group $S_n$) to complex rank, i.e. to non-integer values of $n$.
They were first introduced as an example in the paper by Deligne and Milne \cite{DM}, where the category $\Rep(GL_t)$ for $t$ not necessarily integer was defined. In his later papers Deligne also introduced  the categories $\Rep(O_t), \Rep(Sp_{2t})$ and $\Rep(S_t)$, interpolating the categories of representations of the groups $O_n, Sp_{2n}$ and $S_n$ correspondingly, and also suggested the ultraproduct realization of these categories \cite{D2}, \cite{D1}. We give the definition of the categories $\Rep(G_t)$ for $G = GL, O, Sp$ in Section $2$, we will not work with the categories $\Rep(S_t)$ in this paper.

The Deligne categories provide means to study the stable behavior of representation theoretic constructions for classical groups $GL_n, O_n, Sp_{2n}$ as $n$ goes to infinity. In particular, one can transfer many classical notions and constructions to the complex rank and try to prove some analogues of classical theorems. This direction of study of the Deligne categories was suggested by Pavel Etingof in \cite{E1} and \cite{E}. 

\subsection{Harish-Chandra bimodules}

In this work we continue our study of Harish-Chandra bimodules in $\Rep(G_t)$ started in \cite{U}.

Let $\mathfrak{g}_t$ be the Lie algebra in $\Rep(G_t)$ defined in Subsection \ref{suniv}, it acts \textbf{naturally} on objects of $\Rep(G_t)$ (see Definition \ref{defnatact}). Let  $\mathfrak{g}_t^{\rm diag}\subset \mathfrak{g}_t\oplus\mathfrak{g}_t^{op}$ denote the diagonal copy of $\mathfrak{g}_t$. Let $U(\mathfrak{g}_t)$ be the universal enveloping algebra of $\mathfrak g_t$ (see \ref{defunivenvalg}).

Harish-Chandra bimodules for $G_t$ were defined in \cite{E} as follows. {A $(\mathfrak{g}_t,\mathfrak{g}_t)$-bimodule $M\in\Ind(\Rep(G_t))$ is a Harish-Chandra bimodule if it is finitely generated (that is to say, it is a quotient of $U(\mathfrak{g}_t)\otimes U(\mathfrak{g}_t^{\mathrm{op}})\otimes X$ for some $X\in\Rep(G_t)$), the action of $\mathfrak{g}_t^{\rm diag}$ on $M$ coincides with the \textbf{natural} action of $\mathfrak{g}_t$ and both copies of $Z(U(\mathfrak{g}_t))$ act finitely on $M$.} For more details see Definition \ref{defhcbimod}.

We say that a Harish-Chandra bimodule $M$ has finite $K$-type if for any simple object $X\in\Rep(G_t)$ the multiplicity
$$
\dim \Hom_{\mathfrak{g}_t}(X, M|_{\mathfrak g_t^{\rm diag}})
$$
is finite.

Every classical Harish-Chandra bimodule has finite $K$-type (see \cite{BG}, Proposition 5.3). However, in complex rank the situation is drastically different. It turns out that the finite $K$-type condition doesn't hold even for one of the easiest examples of Harish-Chandra bimodules, the quotient of $U(\mathfrak g_t)$ by the kernel of some central character $\chi$, which we denote by   $U_\chi$ (see Example \ref{exuniv}).

In this paper we construct a family of finite $K$-type Harish-Chandra bimodules $\underline\Hom(\bm, \bl)$ interpolating the finite-dimensional bimodules $\Hom_\ck(V_\bm, V_\bl)$ (see Section $3$ and Appendix).

When we put $\mu = \l$, the resulting bimodule $\underline\End(\l)$ is a quotient of $U_\chi$ for some central character $\chi$ computed in Section $4$. This allows us to construct a family of central characters $\chi$ such that $U_\chi$ is not a simple algebra, and thus, partially answer Problem 3.23 posed in \cite{E}.

\subsection{Categorical $\ssl_\Z$-action}

Following the work of Inna Entova-Aizenbud \cite{EA}, in which she introduced the categorical action of $\ssl_\Z$ on the Deligne category $\Rep(GL_t)$, we define two commuting categorical type $A$ actions on the category of Harish-Chandra bimodules $\hc(\gl_t)$. 

Let $\cc$ be some rigid symmetric monoidal category and $V\in {ob}~ \cc$. As defined in Section $6.2$ in \cite{EA}, the category $\Rep_\cc(\gl(V))$ of objects in $\cc$ with the action of $\gl(V) = V\otimes V^*$ enjoys the categorical type $A$ action given by the following data:
\begin{itemize}
    \item a pair ob biadjoint functors $(E,F)$, where $F= V\otimes (-), E=V^*\otimes (-)$;
    \item an operator $x\in \End(F)$, where $x_M\in \End_\cc(V\otimes M) \cong \Hom_\cc(V\otimes V^*\otimes M, M)$ is given by the action map $\gl_V\otimes M\to M$ for every $M\in {ob}~\cc$;
    \item an operator $\tau = \sigma_{V,V}\otimes 1 \in \End(F^2)$, where $\sigma_{-,-}$ is the symmetric braiding in $\cc$.
\end{itemize}

Bimodules over $\gl_t$ enjoy two commuting (left and right) actions of $\gl(V)=\gl_t$, which produce  two commuting categorical type $A$ actions on the category of Harish-Chandra bimodules.

In Section $6$ we consider the left  action on the subcategory $\hc_\bm$ generated (in a suitable sense) by simple bimodules $\underline\Hom(\bm,\bl)$ with a fixed $\bm$. We show that it induces commuting actions of multiple copies of $\ssl_\Z$ (the number of copies $m$ depends on $\bm$) on the complexified Grothendieck group $\mathbb C\tens{\mathbb Z} G(\hc_\bm)$. Theorems \ref{tgrmod1} and \ref{tgrmod2} provide a description of $\C\tens{\Z}Gr(\hc_\bm)$ as a module over $(\mathfrak{sl}_\Z)^{\otimes m}$.

\subsection{Structure of the paper}

This paper is structured as follows:
\begin{itemize}
    \item In Section 2 we give the definitions of the Deligne categories $\Rep(G_t)$, ultraproduct construction for them, universal enveloping algebra $U(\mathfrak{g}_t)$, its central characters, and Harish-Chandra bimodules.
    \item In Section 3 we prove a couple of technical lemmas allowing us to construct objects in $\Ind\Rep(G_t)$ by taking ultraproducts of classical objects.
    \item In Section 4 we study the stable behavior of finite dimensional Harish-Chandra bimodules $\Hom_\ck(V_\bm, V_\bl)$ and construct the bimodules $\underline\Hom(\bm, \bl)\in \Ind\Rep(G_t)$.
    \item In Section 5 we compute the central characters with which $U(\mathfrak{g}_t)$ acts on the bimodules $\underline\Hom(\bm,\bl)$.
    \item In Section 6 we construct nontrivial ideals in $U_\chi$ and provide a different construction of the bimodules $\underline\End(\bl)$.
    \item In Section 7 we introduce a categorical $\ssl_\Z$-action on the subcategory of bimodules of the form $\underline\Hom(\bm,\bl)$.
    \item In the Appendix (written jointly with Serina Hu) we generalize the above constructions to the categories $\Rep(O_t)$ and $\Rep(Sp_t)$.
    
\end{itemize}

\textbf{Remark.}
In Sections \ref{s3} -- \ref{sslz} we work with the category $\Rep(GL_t)$. 

\begin{section}*{Acknowledgements}
 We are thankful to Pavel Etingof for suggesting this problem and for many helpful discussions.

\end{section}

\section{Main definitions}

First, let us fix some notations.
From now on let $\mathbbm k := \bar\Q$. 

We will give a rather brief description of the Deligne categories below. For a more detailed explanation we refer the reader to the paper by Pavel Etingof \cite{E}. 

\begin{definition}
The Deligne category $\Rep(GL_t)$ is the Karoubian envelope of the rigid symmetric $\C$-linear tensor category generated by a single object $V$ (with dual $V^*$) of dimension $t\in \C$, such that for all $m\ge 1$ we have $\End(V^{\otimes m})=\C[S_m]$, where the symmetric group $S_m$ acts on $V^{\otimes m}$ by permuting the tensor factors; and $\Hom(V^{\otimes m}, V^{\otimes l}) = 0$ when $m\neq l$. 
\end{definition}

\begin{definition}
The Deligne category $\Rep(O_t)$ (resp. $\Rep(Sp_{t})$) is the Karoubian envelope of the rigid symmetric $\C$-linear tensor category generated by the single self-dual object $V$ of dimension $t$, such that $\End(V^{\otimes m})$ is isomorphic to the Brauer algebra $B_m(t)$ for all $m\ge 1$ and the isomorphism $V\to V^*$ is symmetric (resp. skew-symmetric). 
\end{definition} 

It turns out that for non-integer values of $t$ the categories $\Rep(GL_t),  \Rep(O_t)$ and $\Rep(Sp_t)$ are abelian and semisimple (see for example \cite{EGNO}, Subsection 9.12). 

\subsection{The ultraproduct construction}

Let $\cc_t$ denote $\Rep(GL_t), \Rep(O_t)$ or $\Rep(Sp_t)$. We denote by $G_n$ the group $GL_n, O_{2n+1}$, $Sp_{2n}$ respectively and let $\cc_n$ be the category of representations of $G_n$.  We denote by $V^{(n)}$ the defining representation of $G_n$ (it is $n$-dimensional for $GL_n$, $2n$-dimensional for $Sp_{2n}$, and $({2n+1})$-dimensional for $O_{2n+1}$). 

We will now state the result showing that $\cc_t$ can be constructed as a subcategory in the ultraproduct of the categories $\cc_n$. For more details on the ultrafilters and ulraproducts  see \cite{HK} or \cite{S}. A very detailed and nice explanation of the following construction can be found in \cite{K}.  The original statement for transcendental $t$ is due to Pierre Deligne \cite{D1} (but is left without proof). And the similar statement for all values of $t$ (requiring passing to positive characteristics) was proved in \cite{H} by Nate Harman.

Let $\cf$ be a nonprincipal ultrafilter on $\mathbb N$. We will fix some isomorphism of fields $ \prod_\cf \mathbbm k\simeq\mathbb C$.

\begin{theorem}{\cite{D1}}
\label{t1}
The category $\cc_t$ is equivalent to the full subcategory $\widetilde\cc$ in $\prod_{\mathcal F} \cc_n$ generated by $\widetilde V= \prod_\cf V^{(n)}$ under the operations of taking duals, tensor products, direct  sums, and direct summands, if $t$ is the image of $\prod_\cf  n$  under the isomorphism $\prod_\cf  \mathbbm{k} \simeq \C$. This equivalence is an equivalence of symmetric tensor categories over $\C$ and it sends $V$ to $\widetilde V$.
\end{theorem}

\begin{proof}
See, for example, Theorem 2.3. in \cite{U} for the proof.
\end{proof}

\begin{remark}
Note that in the case of $O_t$, one could take the ultraproduct exclusively over even $n$ or over odd $n$, which would produce equivalent categories after a sufficient choice of $\ck$-linear automorphism of $\prod_\cf \ck$ sending $\prod_\cf 2n$ to $\prod_\cf (2n+1)$.  
\end{remark}

\begin{remark}
Clearly, one can only obtain transcendental numbers $t$ as the image of $\prod_{\cf} n$ (or $\prod_{\cf} 2n$). To get this construction for algebraic $t$, one would need to consider the representations of $G_n$ over some fields of positive characteristic \cite{H}.

However, applying automorphisms of $\C$ over $\mathbbm k$, one can show that any transcendental $t$ can be obtained in this manner.
\end{remark}

So, from now on we assume for simplicity that $t$ is non-algebraic. We expect, however, that similar results hold for all non-integer values of $t$.

\subsection{The simple objects}

 In this subsection we give a description of simple objects in $\cc_t$, which coincide with indecomposable objects, as $t$ is assumed to be non-algebraic (thus, $\cc_t$ is semisimple). See \cite{E}, subsections 2.5 and 2.6 for a reference.

Let $\mathcal P_n$ denote the set of partitions of $n$. Let $\mathcal P = \bigcup \mathcal P_n$. For $\l\in \mathcal P$ let $\l_1\ge \l_2\ge\ldots\ge \l_k\ge\ldots$ denote its parts and let $\ell(\l)= \max(i~|~\l_i\neq 0)$ be the length of $\l$. For any $i>\ell(\l)$ we put $\l_i=0$. We will also use the notation $|\l|$ for the number $n$ such that $\l\in\mathcal P_n$.

Let $\rho$ denote the half-sum of all positive roots in the root system $X_n$, $X\in\{A,B,C,D\}$ and  let $W$ denote the corresponding Weyl group. Let $E_n$ be an $n$-dimensional Euclidean vector space with a choice of orthonormal basis $\{e_i\}_{i=1}^n$. Then we choose the following standard realization of root systems:
$$
A_{n-1}=\{e_i-e_j\}_{i\neq j}\subset E_n,
$$
$$
B_n=\{e_i\pm e_j\}_{i\neq j}\cup\{e_i\}_i\subset E_n,
$$
$$
C_n=\{e_i\pm e_j\}_{i\neq j}\cup\{2e_i\}_i\subset E_n,
$$
$$
D_n=\{e_i\pm e_j\}_{i \neq j}\subset E_n.
$$

Let $\mathfrak{g}_n$ denote the Lie algebra associated to $G_n$, and let $\mathfrak h_n\subset \mathfrak{g}_n$ be the Cartan subalgebra. We identify $E_n$ with $\simeq \h_n^*$.

\begin{theorem}\label{tsimpobj}(\cite{E}, Subsection 2.5)
The simple objects of $\Rep(GL_t)$ are labeled by the elements of $\mathcal P\times \mathcal P$. Moreover, if we let $V_\bl$ be the simple object corresponding to some $\bl=(\l,\bar\l)\in \mathcal P\times \mathcal P$, then
$$
V_{\bl} = \prod_\cf V^{(n)}_{\bl},
$$
where for big enough $n$ we define $V^{(n)}_{\bl}$ to be the simple $GL_n$-module of the highest weight $[\bl]_n$ with
$$
[\bl]_n= \underbrace{(\l_1, \ldots,\l_k, 0, \ldots, 0, -\bar\l_l,\ldots, -\bar\l_1)}_n = \sum_{i=1}^n \l_i e_i - \sum_{j=1}^n \bar\l_j e_{n-j+1}\in E_n.
$$

\begin{remark}
We will sometimes abuse the notation and omit the superscript $\nn$ in $V_\bl^\nn$.
\end{remark}

\end{theorem}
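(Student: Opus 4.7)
The idea is to combine the ultraproduct realization of Theorem~\ref{t1} with the classical stable decomposition of the mixed tensor powers $V^{\otimes p}\otimes (V^*)^{\otimes q}$ of the defining representation of $GL_n$.

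By Theorem~\ref{t1}, $\Rep(GL_t)$ is the full subcategory of $\prod_\cf \cc_n$ generated by $V=\prod_\cf V^{(n)}$ under duals, tensor products, direct sums, and summands. Each $\cc_n=\Rep(GL_n)$ is semisimple and the mixed tensor powers $V^{(n)\otimes p}\otimes (V^{(n)*})^{\otimes q}$ split as finite direct sums of simple $GL_n$-modules, so every simple object of $\Rep(GL_t)$ arises as an ultraproduct $\prod_\cf V^{(n)}_{\mu(n)}$, where $V^{(n)}_{\mu(n)}$ is, for $\cf$-almost all $n$, a simple summand of $V^{(n)\otimes p}\otimes (V^{(n)*})^{\otimes q}$ for some fixed $p,q$.

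Next I would invoke the classical fact that for $n\ge p+q$ the irreducible summands of $V^{(n)\otimes p}\otimes (V^{(n)*})^{\otimes q}$ are exactly the modules $V^{(n)}_\bl$ indexed by pairs $\bl=(\l,\bar\l)\in \mathcal P\times \mathcal P$ with $|\l|\le p$, $|\bar\l|\le q$, $p-|\l|=q-|\bar\l|$, and that their multiplicities are independent of $n$ (this is Koike's tensor product formula, or equivalently the generic semisimplicity of the walled Brauer algebra $B_{p,q}(n)$). In particular for fixed $\bl$ and $n\ge \ell(\l)+\ell(\bar\l)$ the weight $[\bl]_n$ is dominant and $V^{(n)}_\bl$ appears with multiplicity one inside $V^{(n)\otimes |\l|}\otimes (V^{(n)*})^{\otimes |\bar\l|}$. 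Hence $\prod_\cf V^{(n)}_\bl$ is a well-defined simple object of $\Rep(GL_t)$. Distinctness for different $\bl$ is immediate since distinct dominant weights $[\bl]_n$ define non-isomorphic $GL_n$-modules on an ultrafilter-large set; exhaustiveness follows because the stable decomposition forces any admissible family $\{\mu(n)\}$ to equal $\{[\bl]_n\}$ for a single $\bl$.

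The main technical input is the uniformity in $n$ of this decomposition: the indexing set of simple summands and their multiplicities must stabilize for $n$ large. This is what allows one to read the varying sequence $\mu(n)$ off as a single pair of partitions $\bl$, and thus to realize $\prod_\cf V^{(n)}_\bl$ as an object of the subcategory $\Rep(GL_t)\subset \prod_\cf\cc_n$ cut out by Theorem~\ref{t1}. Once this stability is granted, both the labeling of simples by $\mathcal P\times \mathcal P$ and the explicit formula $V_\bl=\prod_\cf V^{(n)}_\bl$ follow at once.
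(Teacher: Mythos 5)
The paper does not include its own proof of this classification; Section~2.2 opens with a pointer to \cite{E}, and the ultraproduct formula is essentially taken as read once Theorem~\ref{t1} is in place. Your argument is a legitimate and self-contained way to establish it inside the ultraproduct framework: reduce the classification of simples in $\Rep(GL_t)$ to the endomorphism algebra $\End\bigl(V^{\otimes p}\otimes (V^*)^{\otimes q}\bigr)$, identify it via the ultraproduct with $\prod_\cf B_{p,q}(n)$, and use the fact that for $n\ge p+q$ the walled Brauer algebra $B_{p,q}(n)$ is semisimple with blocks indexed by bipartitions $\bl$ satisfying $|\l|\le p$, $|\bar\l|\le q$, $p-|\l|=q-|\bar\l|$, with multiplicities independent of $n$. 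This matches what \cite{E} derives directly from the defining presentation of $\Rep(GL_t)$, so the two proofs use the same combinatorial input (stable branching of mixed tensor powers / semisimplicity of $B_{p,q}$ at generic parameter) with a different logical entry point: \cite{E} works abstractly in the Karoubi envelope, you pass through the classical categories and stabilize.

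One step deserves to be spelled out more carefully. You assert that an ``admissible family'' $\{\mu(n)\}$ must coincide with $[\bl]_n$ for a single $\bl$; the clean version is that a primitive idempotent $e\in B_{p,q}(t)=\prod_\cf B_{p,q}(n)$ is represented by idempotents $e_n\in B_{p,q}(n)$ that are primitive for $\cf$-almost all $n$ (otherwise the image of $e$ would split), and the map $n\mapsto(\text{the block of }e_n)$ takes values in the finite set $\{\bl : |\l|\le p,\ |\bar\l|\le q,\ p-|\l|=q-|\bar\l|\}$, hence is constant on a member of the ultrafilter $\cf$. This is exactly the ultrafilter pigeonhole argument the paper uses in Lemma~\ref{lwelldefup}, and making it explicit would close the small gap in the ``exhaustiveness'' part of your sketch.
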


\begin{theorem}{(\cite{E}, Subsection 2.6)}
The simple objects of $\Rep(O_t)$ and $\Rep(Sp_t)$ are labeled by the elements of $\mathcal P$. Moreover, if $V_\l$ is the simple object corresponding to some $\l\in\mathcal P$, we have
$$
V_\l=\prod_\cf V^{(n)}_\l,
$$
where for big enough $n$ we define $V_\l^{(n)}$ to be the simple $O_{2n+1}$- (or $Sp_{2n}$-) module with the highest weight $[\l_n]$, with
$$
[\l]_n = \sum_{i=1}^n \l_ie_i\in E_n.
$$

Note that for big enough $n$ the weight $[\l]_n$ is indeed integral and dominant. Also, $V_\l^{(n)}$ is a well defined irreducible submodule of $\mathbb S_\l V^{(n)}$ (where $\mathbb S_\l$ is the Schur functor corresponding to the partition $\l$).
\end{theorem}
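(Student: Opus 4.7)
The plan is to transfer the classical classification across the ultraproduct equivalence of Theorem \ref{t1}. Fix $\l\in\mathcal P$ and take $n\ge\ell(\l)$, so that $[\l]_n=\sum\l_i e_i$ is a dominant integral weight of $G_n$. Classically $V^{(n)}_\l$ appears with multiplicity one as the ``top'' summand of $\mathbb S_\l V^{(n)}$, cut out by the trace-killing idempotent that projects away from all contractions with the invariant bilinear form; the remaining summands of $V^{(n)\otimes|\l|}$ are of the form $V^{(n)}_\mu$ with $|\mu|<|\l|$ and $|\l|-|\mu|$ even. The first step is to observe that the primitive idempotent $e_\l^{(n)}\in\End_{G_n}(V^{(n)\otimes|\l|})$ projecting onto the $V^{(n)}_\l$-isotypic piece can be written uniformly in $n$ as an element of the Brauer algebra on $|\l|$ strands (e.g., as a Young symmetrizer in $\C[S_{|\l|}]$ composed with the standard trace-killing element). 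Consequently $e_\l\coloneqq\prod_\cf e_\l^{(n)}$ is a well-defined idempotent in $\End_{\cc_t}(V^{\otimes|\l|})$.

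Setting $V_\l\coloneqq e_\l(V^{\otimes|\l|})=\prod_\cf V^{(n)}_\l$, Theorem \ref{t1} guarantees $V_\l\in\cc_t$ since it is a direct summand of a tensor power of $V$. Simplicity is then forced by
$$
\End_{\cc_t}(V_\l)=\prod_\cf\End_{G_n}(V^{(n)}_\l)=\prod_\cf\ck\simeq\C,
$$
and, analogously, $\Hom_{\cc_t}(V_\l,V_\mu)=0$ for $\l\ne\mu$, both obtained by applying Schur's lemma componentwise and using the fixed isomorphism $\prod_\cf\ck\simeq\C$. The analysis is identical in the orthogonal and symplectic cases; in the $\Rep(O_t)$ case one has the additional freedom to take the ultraproduct over even or over odd indices, as the remark following Theorem \ref{t1} already notes.

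To show the $V_\l$ exhaust the simples, I would use that every simple object of $\cc_t$ occurs as a summand of some $V^{\otimes m}$, hence corresponds to a primitive idempotent in the endomorphism algebra $\End_{\cc_t}(V^{\otimes m})$, which is the Brauer algebra at parameter $t$ (with the sign convention appropriate to $O_t$ or $Sp_t$). For non-integer $t$ this algebra is semisimple with simple modules indexed by partitions $\l$ satisfying $|\l|\le m$ and $m-|\l|$ even, so it suffices to match this block decomposition with the ultraproduct of the classical block decompositions of $\End_{G_n}(V^{(n)\otimes m})$. The main obstacle is precisely this matching: one must lift a complete system of primitive idempotents in the Brauer algebra at parameter $t$ to a compatible family of primitive idempotents in the classical Brauer algebras, and this is where the cellularity and generic semisimplicity of Brauer algebras enters, ensuring that cellular primitive idempotents are defined by formulas that make sense uniformly in $n$ and so admit a well-defined ultraproduct.
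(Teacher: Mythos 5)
The paper does not supply a proof of this theorem: the subsection ``The simple objects'' opens with ``For the reference please see \cite{E}'', and both classification theorems are stated as imported facts. So there is no in-paper argument to compare against; your proposal has to be evaluated on its own.

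Your plan --- construct $V_\l$ as $\prod_\cf V_\l^{(n)}$ by realizing it as the image of an ultraproduct of classical idempotents, get simplicity from Schur's lemma componentwise, and get exhaustion from the identification of $\End_{\cc_t}(V^{\otimes m})$ with a Brauer algebra --- is the right one and essentially correct, but two points need adjustment. First, the idempotent $e_\l^{(n)}$ must project onto a \emph{single} copy of $V_\l^{(n)}$ inside $V^{(n)\otimes|\l|}$, not onto the full $V_\l^{(n)}$-isotypic piece; the isotypic projector is a central idempotent in $\End_{G_n}(V^{(n)\otimes|\l|})$ and is not primitive once the multiplicity exceeds one (which happens as soon as $\l$ admits more than one standard tableau). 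The ``Young symmetrizer composed with trace-killer'' you mention does give a primitive one --- just make the wording match. Second, the exhaustion step does not require lifting cellular idempotents across the Brauer-algebra tower. Once one knows that $\End_{\cc_t}(V^{\otimes m})\simeq\prod_\cf\End_{G_n}(V^{(n)\otimes m})$ (which follows from Theorem \ref{t1} together with the fact that $\dim\End_{G_n}(V^{(n)\otimes m})$ is eventually constant in $n$), every primitive idempotent $e$ on the left \emph{is} an ultraproduct $\prod_\cf e^{(n)}$, with $e^{(n)}$ primitive for almost all $n$: any orthogonal splitting $e=e_1+e_2$ with $e_i\ne 0$ would split almost all $e^{(n)}$ nontrivially. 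Each such $e^{(n)}$ cuts out some classical $V_\mu^{(n)}$, and since $\mu$ ranges over a fixed finite set once $m$ is fixed, it is constant on a set in $\cf$, so $e(V^{\otimes m})\simeq\prod_\cf V_\mu^{(n)}=V_\mu$. Cellularity and ``uniform-in-$n$'' idempotent formulas thus come out as byproducts rather than being needed as inputs, which removes the obstacle you flagged.
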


\begin{remark}
The categories $\Rep(O_t)$ and $\Rep(Sp_{-t})$ are equivalent as tensor categories and differ only by a change of the commutativity isomorphism. See the remark after Definition $2.10$ in \cite{E}. 
\end{remark}

\subsection{The universal enveloping algebra}\label{suniv}

\begin{definition}
We define
$$
\mathfrak{g}_t= \prod_\cf \mathfrak{g}_n.
$$
It is a Lie algebra object in the category $\cc_t$. 
\end{definition}

\begin{definition}\label{defnatact}
The Lie algebra $\mathfrak{g}_t$ acts on all objects of $\cc_t$ (as $\mathfrak{g}_n$ acts on objects of $\cc_n$). This action can be extended to the action on ind-objects. We will refer to it as the \textbf{natural} action of $\mathfrak{g}_t$.
\end{definition}

 When $G = GL$ we have
$$
\gl_t = V\otimes V^*
$$
as objects of $\cc_t$. 

Similarly, we have
$$
\mathfrak o_t = \Lambda^2 V,
$$
$$
\mathfrak{sp}_t = {\mathrm S}^2V
$$
as objects of their corresponding categories.

\begin{definition}\label{defunivenvalg}

We define the universal enveloping algebra $U(\mathfrak{g}_t)\in \Ind\cc_t$ to be the filtered ultraproduct of the universal enveloping algebras $U(\mathfrak{g}_n)$ with respect to the Poincar\'e-Birkhoff-Witt (PBW for short) filtration $F$. That is
$$
F^kU(\mathfrak{g}_t) = \prod_\cf F^kU(\mathfrak{g}_n)~~~~~~(*)
$$
and 
$$
U(\mathfrak{g}_t) = \colim_k F^kU(\mathfrak{g}_t).~~~~~~(**)
$$
\end{definition}

\textbf{Notation.} Whenever we have an ind-object $X$ in $\cc_t$ with filtration $F$ (i.e. $X  = \colim_k F^k X$ with $F^kX\in \cc_t$) and 
$$
F^k X = \prod_\cf X^{(n)}_k,
$$
we write 
$$
X=\prod_\cf^F X^{(n)},
$$
where $X^{(n)} = \colim_k X^{(n)}_k \in \Ind\cc_n$, and we put $F^kX^{(n)}= X^{(n)}_k$. We say $X$ is the filtered ultraproduct of $X^{(n)}$ with respect to the filtration $F$. (See Section \ref{s1} for details).

Thus, we have
$$
U(\mathfrak{g}_t) = \prod_\cf^F U(\mathfrak{g}_n),
$$
which is just a short notation for $(*)$ and $(**)$. As in the case of $\mathfrak{g}_t$, we have a natural action of $U(\mathfrak{g}_t)$ on all objects of $\cc_t$.

\begin{definition}
Let $Z(U(\mathfrak{g}_t))$ be the center of $U(\mathfrak g_t)$. That is, for every $k$ we define $F^k Z(U(\mathfrak g_t)) \subset F^k U(\mathfrak g_t)$ to be the maximal subobject $Z_k$, such that for any other subobject $X\subset U(
\mathfrak g_t)$ the commutator morphism $(m - m\circ \tau)$ restricted to $X\otimes Z_k$ is zero (here $m$ denotes the multiplication map $U(\mathfrak g_t)\otimes U(\mathfrak g_t)\to U(\mathfrak g_t)$ and $\tau$ is the braiding morphism for $U(\mathfrak g_t)\otimes U(\mathfrak g_t)$). We put $Z(U(\mathfrak g_t))= \colim_k F^k Z(U(\mathfrak{g}_t))$. It is naturally a filtered subalgebra of $U(\mathfrak g_t)$.
\end{definition}

Since $U(\mathfrak g_t)$ is generated by $\mathfrak g_t$, we have   $Z(U(\mathfrak{g}_t)) =  \Hom_{\mathfrak{g}_t}(\one, U(\mathfrak{g}_t)^{\rm {ad}}) = \Hom_{\Ind \cc_t}(\one, U(\mathfrak{g}_t))$ (where $\one$ denotes the unit object and we consider $U(\mathfrak g_t)$ as a $\mathfrak g_t$-module via the adjoint action), so it is an algebra over $\C$ (that is, it is an algebra in the category of vector spaces, which we consider as a full tensor subcategory of $\cc_t$ generated by $\one$). The embedding of $Z(U(\mathfrak g_t))$ into $U(\mathfrak g_t)$, then coincides with the natural map $\one \otimes \Hom_{\Ind\cc_t}(\one, U)\to U$ for $U= U(\mathfrak g_t)$.
We have
$$
Z(U(\mathfrak{g}_t))=\prod_\cf^F Z(U(\mathfrak{g}_n)).
$$

For any $n$ we have the Harish-Chandra isomorphism
$$
Z(U(\mathfrak{g}_n))\simeq\ck[\h_n^*]^{W},
$$
with the induced filtration by the degree of $W$-invariant polynomials. By the Chevalley–Shephard–Todd theorem, the ring of invariants is a polynomial ring. Let us consider two cases:

\textit{Case 1} ($G_n=GL_n$).

In this case we have $W=S_n$ and $Z(U(\mathfrak{g}_n))$ is isomorphic to the ring of symmetric polynomials in $n$ variables. It is generated by the power sum polynomials $p_k = \sum_i x_i^k$, which on the left hand side correspond to some central elements $C_k$, such that for any weight $\nu \in \h_n^*$ and the corresponding irreducible $\mathfrak{g}_n$-module $L(\nu)$ with the highest weight $\nu$ we have
$$
C_k|_{L(\nu)} = \sum_{i=1}^n \left((\nu_i+\rho_i)^k - \rho_i^k\right).
$$
(Note that this is just our choice of generators with particular normalization, there are many ways to do that). Now 
$$
Z(U(\mathfrak{g}_t))=\prod^F_\cf \ck[C_1,\ldots,C_n] \simeq \C[C_1,C_2,\ldots]
$$
- the polynomial algebra with generators $C_k, k\in\N$ with $\deg C_k = k$. 

\textit{Case 2} ($G_n=O_{2n+1}$ or $G_n=Sp_{2n}$).

In this case we have $W=(\Z/2\Z)^n\rtimes S_n$ and $Z(U(\mathfrak{g}_n))$ is isomorphic to the ring of symmetric polynomials in $n$ variables invariant under multiplying any of these variables by $-1$. That is, it is a polynomial algebra generated by the even power sum polynomials $p_{2k} = \sum_i x_i^{2k}$. On the left hand side we get central elements $C_{2k}$, such that for any weight $\nu\in E_{n}$  and the corresponding irreducible $\mathfrak{g}_n$-module $L(\nu)$ with the highest weight $\nu$ we have
\[
C_{2k}|_{L(\nu)} = \begin{cases}\sum \left( (\nu_i+\rho_i)^{2k}-\rho_i^{2k}\right), \text{ if } G_n = O_{2n+1},\\
\sum \left( (\nu_i+\rho_i-1)^{2k}-(\rho_i-1)^{2k}\right), \text{ if } G_n = Sp_{2n}.
\end{cases}
\]
(Note that it is again just a specific choice of normalization).

Then
$$
Z(U(\mathfrak{g}_t)) = \prod_\cf^F \ck[C_2,C_4,\ldots, C_{2n}] = \C[C_2,C_4,\ldots]
$$
- the polynomial algebra with generators $C_k, k\in 2\N$ with $\deg C_k=k$.

By central character $\chi$ of $U(\mathfrak{g}_t)$ we will mean any algebra homomorphism
$$
\chi:Z(U(\mathfrak{g}_t))\to \C.
$$
We will say $\chi$ is the ultraproduct of $\chi^{(n)}$ with 
$$
\chi^{(n)}: Z(U(\mathfrak{g}_n))\to \ck,
$$
if for any $k$
$$
\chi(C_k) = \prod_\cf \chi^{(n)}(C_k).
$$

\subsection{Harish-Chandra bimodules}

Let $\mathfrak g_t^{op}$ denote the object $\mathfrak g_t$ with the opposite Lie algebra structure. Let us denote by $\mathfrak{g}_t^{\rm diag}$ the diagonal copy of $\mathfrak{g}_t$ inside $\mathfrak{g}_t\oplus \mathfrak{g}_t^{op}$ via the embedding $(\text{id}_{\mathfrak g_t}, -\text{id}_{\mathfrak g_t})$.

\begin{definition}\label{defhcbimod}
A Harish-Chandra bimodule in $\cc_t$ is an object $M\in \Ind \cc_t$ with a structure of an $U(\mathfrak g_t)$-bimodule (or, in other words, a left $(U(\mathfrak{g}_t)\otimes U(\mathfrak{g}_t^{op})$-module), such that
\begin{itemize}
    \item it is a finitely generated $U(\mathfrak{g}_t)\otimes U(\mathfrak{g}_t^{op})$-module, i.e. it is a quotient of $(U(\mathfrak{g})\otimes U(\mathfrak{g}^{op}))\otimes X$ for some $X\in\cc_t$, 
    
    \item $\mathfrak{g}_t^{\rm diag}$ acts \textbf{naturally} on $M$ (in the sense of Definition \ref{defnatact}),
    
    \item both copies of $Z(U(\mathfrak{g}_t))$ act finitely on $M$ (that is the annihilator of $M$ in $Z((U(\mathfrak{g}_t)\otimes U(\mathfrak{g}_t^{op}))) = Z(U(\mathfrak{g}_t))\otimes Z(U(\mathfrak{g}_t))$ is an ideal of finite codimension).
\end{itemize}

\end{definition}

We denote the category of Harish-Chandra bimodules $\hc(\mathfrak{g}_t)$. 

For any $n$ we can similarly define a Harish-Chandra bimodule over $\mathfrak{g}_n$. By the \textbf{natural} action of $\mathfrak{g}_n^{\rm diag}$ we will mean the action of $\mathfrak{g}_n$ on ind-objects of $\cc_n$ that integrates to the action of $G_n$ (i.e. coincides with the derivative of the action of $G_n$), so that $M$ is a well-defined object in $\Ind\cc_n$. We will denote the corresponding category $\hc(\mathfrak{g}_n)$.

\begin{definition}
The category $\hc_{\chi,\psi}(\mathfrak{g}_t)$ is the full subcategory in $\hc(\mathfrak{g}_t)$ consisting of bimodules $M$ on which the left copy of $Z(U(\mathfrak{g}_t))$ acts via central character $\chi$ and the right copy of $Z(U(\mathfrak{g}_t))$ acts via central character $\psi$.
\end{definition}

\begin{ex}\label{exuniv}
Let $$U_\chi = U(\mathfrak{g}_t)/U(\mathfrak g_t) \Ker(\chi)$$ be the quotient of the universal enveloping algebra by the two-sided ideal generated by the kernel of a central character $\chi$. We equip $U_\chi$ with the structure of $U(\mathfrak{g}_t)$-bimodule via left and right multiplication. Then $U_{\chi}\in \hc_{\chi,\chi}(\mathfrak{g}_t)$.
\end{ex}

We similarly define the categories $\hc_{\chi,\psi}(\mathfrak{g}_n)$.

\begin{definition}

We say that a Harish-Chandra bimodule $M$ has \textbf{finite $\mathbf{K}$-type} (morally we let $K$  be the group corresponding to the Lie algebra $\mathfrak{g}_t^{\rm diag})$ if for any simple object $X\in \cc_t$ 
$$
\dim \Hom_{\Ind\cc_t}(X,M) <\infty.
$$

\end{definition}

\begin{remark}

The classical counterpart of this definition gives an empty condition on a Harish-Chandra bimodule $M\in\hc(\mathfrak{g}_n)$ (see \cite{BG}, Proposition 5.3). However, in the setting of the Deligne categories, things get more complicated. For example,  $U_\chi$ will not have a finite $K$-type (see \cite{E} Remark 3.21).
\end{remark}

The goal of this paper is to provide a construction of a family of Harish-Chandra bimodules of finite $K$-type, that will be a generalization, in some sense, of finite dimensional Harish-Chandra bimodules. That is, it will be a filtered ultraproduct of bimodules of the form
$$
\Hom_\ck (V_\bm^\nn, V_\bl^\nn) = V_\bl^\nn\otimes (V_\bm^\nn)^*.
$$

\section{Filtered ultraproducts}\label{s1}

In this section we will prove a few technical theorems, that will allow us to construct objects of $\Ind\cc_t$ as the filtered ultraproduct of objects of $\Ind\cc_n$.

\subsection{Definitions and notations}

\begin{definition}
For a collection $X_n$ of objects of $\Ind\cc_n$  with filtration $F$ we define the filtered ultraproduct as
$$
\prod^F_\cf X_n = \colim_k \prod_\cf F^kX_n.
$$
It is an object of $\Ind(\prod_\cf \cc_n)$.
\end{definition}

For any category $\cc$ we define the ind-completion $\Ind\cc$ as the full subcategory of presheaves on $\cc$ that are filtered colimits of representable objects. Then $\cc$ is a subcategory of $\Ind\cc$ via the Yoneda embedding $X\mapsto \Hom_{\cc}(\cdot, X)$.

Let $S$ denote the set of isomorphism classes of simple objects of $\cc_t$. That is, we identify $S$ with $\cp\times\cp$ if $G=GL$ and with $\cp$ when $G=O$ or $Sp$. We write $V_\bl$ for the simple module corresponding to $\bl\in S$.

Let us fix some filtration (by finite subsets) $F$ on $S$. It induces a filtration on each $\cc_n$ and on $\cc_t$. Since our category is semisimple, we have
$$
F^kX = \bigoplus_{\bl\in F^k S} V_{\bl}\otimes \Hom(V_\bl,X)
$$
for any $X\in\cc_n$ or $\cc_t$ (where we take $\Hom$ in the corresponding category).

Moreover, for any object $X= \prod_\cf X_n\in\cc_t$
$$
X\in F^k\cc_t \text{ if and only if } X_n\in F^k\cc_n \text{ for almost all } n, \text{ i.e. for all } n \text{ in some } U\in \cf.
$$

\begin{ex}
\label{exfiltr}
Let $G=GL$ and thus, $S=\cp\times \cp$.

Consider $F^k(\cp\times\cp) =\{\bl = (\l,\bar\l), \text{ such that } |\l|,|\bar\l|\le k\}$. Then if $l,m\le k$ 
$$
V^{\otimes l} \otimes (V^*)^{\otimes m} \in F^k\cc_t
$$
\end{ex}

\subsection{Some technical lemmas}

\begin{lemma}
\label{lwelldefup}
Let $X_n\in \Ind\cc_n$ be some filtered objects with filtration $\widetilde F$ and a function $c:S \times \Z_{\ge 0}\to \Z_{\ge 0}$, such that for almost all $n$, i.e. for all $n$ in some $U\in\cf$:
\begin{itemize}
    \item $\forall \bl \in S$ we have $[\widetilde F^k X_n: V^\nn_{\bl}]\le c(\bl,k)$,
    \item $\widetilde F^k X_n \in F^k \cc_n$.
\end{itemize}
Then 
$$
\prod_\cf^{\widetilde F} X_n\in \Ind\cc_t,
$$
that is, for all $k$
$$
\prod_\cf \widetilde F^kX_n\in \cc_t.
$$
\end{lemma}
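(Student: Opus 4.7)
The plan is to fix $k$ and show that $\prod_\cf \widetilde F^k X_n$ lies in $\cc_t$ (rather than merely in $\prod_\cf \cc_n$). The key observation is that the two hypotheses---containment in $F^k \cc_n$ and uniformly bounded multiplicities---together pin $\widetilde F^k X_n$ down, up to a finite direct sum of isotypic pieces indexed by the finite set $F^k S$, with multiplicity spaces of bounded $\ck$-dimension. The strategy is then to decompose, take the ultraproduct termwise, and recognize the result as an object of $\cc_t$ via Theorem \ref{t1}.

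Step 1 is the isotypic decomposition. Since $\cc_n$ is semisimple and $\widetilde F^k X_n \in F^k \cc_n$, for $n$ in some $U \in \cf$ we can write
$$
\widetilde F^k X_n \;\cong\; \bigoplus_{\bl \in F^k S} V_\bl^{(n)} \otimes_\ck M_\bl^{(n)}, \qquad M_\bl^{(n)} := \Hom_{\cc_n}(V_\bl^{(n)}, \widetilde F^k X_n),
$$
with $\dim_\ck M_\bl^{(n)} = [\widetilde F^k X_n : V_\bl] \le c(\bl,k)$. Step 2 is to take the ultraproduct; because $F^k S$ is finite and the multiplicity dimensions are uniformly bounded, the ultraproduct of this decomposition is
$$
\prod_\cf \widetilde F^k X_n \;\cong\; \bigoplus_{\bl \in F^k S} V_\bl \otimes_\C W_\bl,
$$
where $V_\bl = \prod_\cf V_\bl^{(n)}$ and $W_\bl = \prod_\cf M_\bl^{(n)}$. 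By Theorem \ref{t1}, each $V_\bl$ lies in $\cc_t$, and $W_\bl$ is a finite-dimensional $\C$-vector space (an ultraproduct of vector spaces of uniformly bounded dimension takes the dimension that is attained on some member of $\cf$, by a pigeonhole argument on a finite range of integers). Step 3: each summand $V_\bl \otimes_\C W_\bl$ is a finite direct sum of copies of $V_\bl \in \cc_t$, hence the whole direct sum lies in $\cc_t$, as required.

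The only point requiring real care is the commutation of $\prod_\cf$ with the finite direct sum and the tensor product against the multiplicity spaces in the second display. Once both the index set $F^k S$ and the multiplicity spaces have been made uniform in $n$ via the hypotheses, this commutation is essentially built into the componentwise definition of the ultraproduct of categories---equivalently, the idempotents projecting onto the isotypic components can be lifted componentwise and ultraproduced---so no further obstacle arises and the rest of the argument is structural.
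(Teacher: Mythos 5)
Your proposal is correct and takes essentially the same route as the paper: fix $k$, use that $F^k S$ is finite and the multiplicities $[\widetilde F^k X_n : V_\bl]$ are bounded by $c(\bl,k)$, and invoke the ultrafilter (pigeonhole on the finite range of possible values) to stabilize each multiplicity on some $U_{\bl,k}\in\cf$; intersecting finitely many of these gives a set on which the isotypic decomposition is constant, so the ultraproduct is a finite direct sum $\bigoplus_{\bl\in F^kS} V_\bl^{c'(\bl,k)}\in\cc_t$. Your phrasing in terms of multiplicity spaces $W_\bl=\prod_\cf M_\bl^{(n)}$ is just a repackaging of the same pigeonhole step.
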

\begin{proof}
It is enough to show that for all $k$ there exists $U_k\in \cf$, such that 
$$
\forall n\in U_k~\widetilde F^k X_n = \bigoplus_{\bl\in S} (V^\nn_\bl)^{\oplus c'(\bl,k)}
$$
for some $c':S\times\Z_{\ge 0}\to \Z_{\ge 0}$ independent of $n$. Let  $$f:U\to \{0,1,\ldots,c(\bl,k)\},$$ be defined by 
$$
f(n)=[\widetilde F^k X_n:V^\nn_\bl].
$$
Then, $U$ is the disjoint union of the sets $f^{-1}(a)$ for $0\le a\le c(\bl,k)$. So, there exists  $c'(\bl,k)\le c(\bl,k)$, such that $f^{-1}(c'(\bl, k)) = U_{\bl,k}\in\cf$ (otherwise, the compliment of $U$ is the finite intersection of sets in $\cf$, so it must be in $\cf$). Hence,
$$
\forall n\in U_{\bl,k}~ [\widetilde F^k X_n:V^\nn_\bl] = c'(\bl, k).
$$
Now we take $U_k = \bigcap_{\bl\in F^kS} U_{\bl,k}$.
\end{proof}

\begin{lemma}
\label{lequivup}
Let $X_n\in \cc_n$ be some filtered objects with filtration $\widetilde F$ and a function $c: S\to \Z_{\ge 0}$, such that for almost all $n$:
\begin{itemize}
    \item $\forall \bl\in S$ we have $[X_n:V^\nn_\bl]\le c(\bl)$,
    \item $\widetilde F^k X_n\subset F^kX_n$.
\end{itemize}
Then we have a natural isomorphism
$$
\prod_\cf^{\widetilde F} X_n \simeq \prod_\cf^F X_n
$$
as ind-objects.
\end{lemma}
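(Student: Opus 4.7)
The plan is to build the natural comparison morphism induced by the term-wise inclusions $\widetilde F^k X_n \subset F^k X_n$ and then check it is an isomorphism in $\Ind\cc_t$ by computing $V_\bl$-multiplicities on both sides.

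First I would verify that both sides are legitimate objects of $\Ind\cc_t$. The hypotheses $\widetilde F^k X_n \subset F^k X_n$ and $[X_n:V_\bl]\le c(\bl)$ together imply both $[\widetilde F^k X_n:V_\bl]\le c(\bl)$ and $\widetilde F^k X_n\in F^k\cc_n$, so Lemma~\ref{lwelldefup} applies to the filtered collection $(\widetilde F^k X_n)_n$; the analogous check for $(F^k X_n)_n$ is automatic. The term-wise inclusions ultraproduct to monomorphisms $\prod_\cf\widetilde F^k X_n\hookrightarrow\prod_\cf F^k X_n$ in $\cc_t$, and the filtered colimit in $k$ assembles them into a morphism $\alpha:\prod_\cf^{\widetilde F}X_n\to\prod_\cf^F X_n$ of $\Ind\cc_t$; exactness of filtered colimits in the Grothendieck abelian category $\Ind\cc_t$ then makes $\alpha$ a monomorphism.

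For surjectivity I would use semisimplicity of $\cc_t$ to reduce to comparing, for every simple $V_\bl$, the multiplicity $\dim\Hom(V_\bl,-)$ on source and target. On the target, pick $k_0$ with $\bl\in F^{k_0}S$; for $k\ge k_0$ the component $F^k X_n$ already contains the entire $V_\bl$-isotypic piece of $X_n$, so the $V_\bl$-multiplicity of $\prod_\cf F^k X_n$ stabilizes at the ultralimit value $m$ of the integer sequence $n\mapsto[X_n:V_\bl]\in\{0,\dots,c(\bl)\}$ (well defined because exactly one of the finitely many level sets $\{n:[X_n:V_\bl]=j\}$ lies in $\cf$). On the source, exhaustiveness of $\widetilde F$ on each $X_n$ combined with the uniform bound $c(\bl)$ forces the non-decreasing integer sequence $k\mapsto[\widetilde F^k X_n:V_\bl]$ to stabilize at $[X_n:V_\bl]$; passing to the ultralimit in $n$ and then to the colimit in $k$ yields the $V_\bl$-multiplicity of $\prod_\cf^{\widetilde F} X_n$.

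The crux, and the step I expect to be the main obstacle, is the interchange
\[
\colim_k\;\lim_\cf\,[\widetilde F^k X_n:V_\bl]\;=\;\lim_\cf\,[X_n:V_\bl]\;=\;m.
\]
This exchange uses the uniform bound $c(\bl)$ in an essential way: all the quantities involved live in the finite set $\{0,\dots,c(\bl)\}$, so on the $\cf$-large set $\{n:[X_n:V_\bl]=m\}$ a pigeonhole argument on the $c(\bl)+1$ possible ``current'' values identifies, for each $k$, a canonical element of $\cf$ on which $[\widetilde F^k X_n:V_\bl]$ is constant, and monotonicity in $k$ together with exhaustiveness pushes this common value up to $m$. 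Once the interchange is established, $\alpha$ is an isomorphism on every $V_\bl$-isotypic component and therefore an isomorphism in $\Ind\cc_t$.
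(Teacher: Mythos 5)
Your route is genuinely different from the paper's. You build the natural comparison monomorphism $\alpha\colon\prod_\cf^{\widetilde F}X_n\to\prod_\cf^F X_n$ and want to show it is an isomorphism by matching $V_\bl$-multiplicities on source and target. The paper instead computes $\colim_k\Hom_{\cc_t}(Y,\prod_\cf F^kX_n)=\prod_\cf\Hom_{\cc_n}(Y_n,X_n)$ and then argues this object carries the universal property of $\colim_k\Hom_{\cc_t}(Y,\prod_\cf\widetilde F^kX_n)$ by producing, for each cone $S(Y)=\prod_\cf S_n(Y_n)$, a per-$n$ factorization $\phi^\nn(Y_n)$ through $\colim_k\Hom_{\cc_n}(Y_n,\widetilde F^kX_n)=\Hom_{\cc_n}(Y_n,X_n)$ (the stabilization uses that each $X_n$ is a finite object in $\cc_n$) and then taking the ultraproduct. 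Both routes ultimately hinge on commuting $\colim_k$ past $\prod_\cf$; in your version this is exactly the step you correctly single out as the crux.

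That interchange, however, cannot be carried out from the two stated bullet hypotheses alone, and the uniform bound $c(\bl)$ does not rescue it. The pigeonhole step does produce, for each $k$, a value $m_k\coloneqq\lim_\cf[\widetilde F^kX_n:V_\bl]\in\{0,\dots,c(\bl)\}$, and monotonicity gives $m_k\le m_{k+1}\le\ldots\le m\coloneqq\lim_\cf[X_n:V_\bl]$. But ``monotonicity plus exhaustiveness pushes this common value up to $m$'' is where things break: exhaustiveness is a statement about each $X_n$ separately, and the stage at which $\widetilde F^kX_n$ first swallows the $V_\bl$-isotypic part of $X_n$ can run off to infinity with $n$. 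Since $\{n:n>k\}\in\cf$ for every fixed $k$ (as $\cf$ is non-principal), this cannot be ruled out. Concretely, set $X_n=V_\bl^\nn$, $\widetilde F^kX_n=0$ for $k<n$ and $\widetilde F^kX_n=X_n$ for $k\ge n$; both bullet hypotheses hold for almost all $n$ with $c(\bl)=1$, yet $\prod_\cf\widetilde F^kX_n=0$ for every $k$, so $\prod_\cf^{\widetilde F}X_n=0$, while $\prod_\cf^FX_n=V_\bl$. What your argument actually needs is a uniformity-in-$n$ condition, for instance: for each $\bl\in S$ there is $K(\bl)$ such that $[\widetilde F^{K(\bl)}X_n:V_\bl]=[X_n:V_\bl]$ for almost all $n$. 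You should surface that condition explicitly rather than fold it into ``pigeonhole plus monotonicity.'' In the situations where the lemma is actually invoked in the paper (images and quotients of the PBW filtration on $U(\g_n)$) such uniformity is available for representation-theoretic reasons, and it is what genuinely carries the weight at the step you flag.
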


{Note} that unlike Lemma \ref{lwelldefup}, here we must require that $X_n\in \cc_n$ and $[X_n:V^\nn_\bl]$ is bounded for almost all $n$, so that the ultraproduct $\prod_\cf^F X_n$ is a well-defined object in $\Ind\cc_t$.

\begin{proof}
Let 
$$
\boldsymbol F^k = \Hom_{\cc_t}(~\cdot~, \prod_\cf F^k X_n),
$$
$$
\widetilde \bF^k= \Hom_{\cc_t}(~\cdot~, \prod_\cf \widetilde F^k X_n).
$$
We want to show that $\colim \bF^k = \colim\widetilde\bF^k$. We can check this object-wise, so let us fix some $Y\in\cc_t$. Then there exists some $m$ such that $Y\in F^m\cc_t$, and thus, for any $k\ge m$
$$
\Hom_{\cc_t}(Y,\prod_\cf F^k X_n) = \Hom_{\cc_t}(Y, \prod_\cf F^mX_n). 
$$
Now, if $Y = \prod_\cf Y_n$, we have $Y_n\in F^m\cc_n$ for almost all $n$ and, therefore, 
$$
\Hom_{\cc_n}(Y_n,X_n) = \Hom_{\cc_n}(Y_n,F^mX_n).
$$
Thus, 
$$
\colim \bF^k(Y) = \Hom_{\cc_t}(Y,\prod_\cf F^mX_n) = \prod_\cf \Hom_{\cc_n}(Y_n,F^mX_n) = \prod_\cf \Hom_{\cc_n}(Y_n,X_n).~~~(\ref{lequivup}.1)
$$
On the other hand, since $\widetilde F^kX_n\subset F^k X_n$, there exists a natural map
$$
\colim {\widetilde \bF}^k(Y)\to \colim \bF^k(Y).
$$
It is left to check that $\colim \bF^k(Y)$ is indeed the colimit of $\widetilde \bF^k(Y)$ and satisfies the corresponding universal property.

Let $S$ be a $\C$-vector space with maps $\phi_k: \widetilde \bF^k(Y)\to S$, commuting with the natural inclusions $\widetilde \bF^k(Y)\to \widetilde \bF^{k+1}(Y)$. Since, $\mathrm{Vect}_\C = \prod_\cf \mathrm{Vect}_\ck$, we have 
$$
S = \prod_\cf S_n,
$$
for some $S_n\in \mathrm{Vect}_\ck$. Moreover, $\phi_k = \prod_\cf \phi_k^\nn$ with 
$$
\phi_k^\nn: \widetilde \bF^k(Y_n)\to S_n.
$$

For a fixed $n$ the collection of maps $\phi_k^{(n)}$ gives a map from the direct system of $\widetilde\bF^k(Y_n) = \Hom_{\cc_n}(Y_n, \widetilde F^k X_n)$ to $S_n$. Thus, there exists a map
$$
\phi^\nn: \colim \widetilde\bF^k(Y_n) = \Hom_{\cc_n} (Y_n, X_n)\to S_n.
$$
The equality coming from the fact that on each $X_n$ the filtration is finite.

Finally, define $\phi= \prod_\cf \phi^\nn: \prod_\cf \Hom_{\cc_n}(Y_n,X_n) \to S$. Thus, by $(\ref{lequivup}.1)$,
$$
\phi: \colim \bF^k(Y) \to S.
$$
Obviously, by construction, $\phi$ commutes with all appropriate  maps, showing that $\colim \bF^k(Y)$ indeed satisfies the universal property of the colimit of $\widetilde \bF^k(Y)$.
\end{proof}

\begin{ex}\label{expbw}
Let $\widetilde F$ be the PBW-filtration on $U(\gl_t)$. Then $\widetilde F^k U(\gl_t)$ is the image of $\bigoplus_{i=0}^k (V\otimes V^*)^{\otimes i} =  \bigoplus_{i=0}^k \gl_t^{\otimes i} \subset T(\gl_t)$ under the natural projection from the tensor algebra on $\mathfrak{g}_t$ to the universal enveloping algebra. So, if $F$ is the filtration from  Example \ref{exfiltr}, we have 
$$
\widetilde F^kU(\gl_t)\in F^k\cc_t.
$$
So, Lemma \ref{lequivup} will allow us to take ultraproducts of sub- and quotient-bimodules of $U_\chi$ with the induced filtration.
\end{ex}

\section{Finite dimensional Harish-Chandra bimodules}\label{s3}

For any $n$ and any integral dominant weights $\bl,\bm$ the space of homomorphisms
$$
\Hom_\ck(V_{\bm}^{(n)}, V^{(n)}_{\bl})
$$
is a simple finite-dimensional Harish-Chandra bimodule for $\mathfrak{g}_n$. As an $\mathfrak{g}_n^{\rm diag}$-module it is isomorphic to 
$$
V_{\bl}^\nn \otimes (V_{\bm}^\nn)^*.
$$
 
For any weight $\boldsymbol\nu\in \h_n^*$ let $\chi_{\boldsymbol\nu}$ denote the central character with which $Z(U(\mathfrak{g}_n))$ acts on the simple module $L(\nu)$. Now if $\chi= \chi_\bl$ and $\psi = \chi_\bm$, we have
$$
\Hom_\ck(V_\bm^{(n)}, V^{(n)}_\bl)\in\hc_{\chi,\psi}(\mathfrak{g}_n).
$$

We would like to take an ultraproduct of such modules to construct an HC-bimodule in $\Ind\cc_t$. However, we will need to impose some conditions on $\bl$ and $\bm$ and introduce some filtration in order to obtain a well-defined object in $\Ind\cc_t$.

From now on let us concentrate on the case $G = GL$. We will provide the corresponding constructions and theorems for $G=O$ and $Sp$ in the appendix.

\subsection{Classical case and multiplicities}

The dominant integral weights of $GL_n$ are in bijection with bipartions $\boldsymbol\l=(\l,\bar \l)\in \mathcal P\times \mathcal P$ with $\ell(\l)+\ell(\bar \l)\le n$ via the map
$$
\bl\mapsto \sum_{i=1}^n \l_ie_i - \sum_{j=1}^n \bar\l_j e_{n+1-j}.
$$

\begin{definition}
For $\l\in \cp$ let $d(\l)$ be the Dufree size (or rank) of $\l$, that is, the length of the main diagonal of the corresponding Young diagram.
\end{definition}

\begin{definition}

Let $\l'$ be the partition conjugate to $\l$ (that is, the Young diagram of $\l'$ is obtained from the diagram of $\l$ by reflecting about the main diagonal).
\end{definition}

For any partition $\l$ and some numbers $k,l\le d(\l)$ let us divide the diagram of $\l$ into four parts in the following way: we make a horizontal cut under the $k$-th row of $\l$ and a vertical cut to the right of the $l$-th column of $\l$.

The resulting four parts consist of a $k$-by-$l$ rectangle on the top left, diagram $\alpha$ on the top right, diagram $\beta'$ on the bottom left, and diagram $\gamma$ on the bottom right. See Figure \ref{fig1}.

\begin{figure}\caption{Cutting the diagram of a partition into four parts}\label{fig1}
\begin{tikzpicture}
\filldraw[fill=red!20!white, draw=red!20!white] (2,0) rectangle (3.5,-0.5);
\filldraw[fill=red!20!white, draw=red!20!white] (2,-0.5) rectangle (3,-1);
\filldraw[fill=red!20!white, draw=red!20!white] (2,-1) rectangle (2.5,-1.5);
\filldraw[fill=green!20!white, draw=green!20!white] (2,2.5) rectangle (7,2);
\filldraw[fill=green!20!white, draw=green!20!white] (2,2) rectangle (6,1.5);
\filldraw[fill=green!20!white, draw=green!20!white] (2,1.5) rectangle (5.5,1);
\filldraw[fill=green!20!white, draw=green!20!white] (2,1) rectangle (4.5,0.5);
\filldraw[fill=green!20!white, draw=green!20!white] (2,0.5) rectangle (4,0);
\filldraw[fill=yellow!20!white, draw=yellow!20!white] (0,0) rectangle (2,-2);
\filldraw[fill=yellow!20!white, draw=yellow!20!white] (0,-2) rectangle (1.5,-4.5);
\filldraw[fill=yellow!20!white, draw=yellow!20!white] (0,-4.5) rectangle (1,-5);
\filldraw[fill=yellow!20!white, draw=yellow!20!white] (0,-5) rectangle (0.5,-5.5);
\filldraw[fill=blue!20!white, draw=blue!20!white] (0,0) rectangle (2,2.5);
\draw (2,2.5) -- (2,0);
\draw (0,0) -- (2,0);
\draw (0,0) -- (0,2.5);
\draw (0,2.5) -- (7,2.5);
\draw (0,0) -- (4,0);
\draw (6,2) -- (7,2);
\draw (5.5,1.5) -- (6,1.5);
\draw (4.5,1) -- (5.5,1);
\draw (4,0.5) -- (4.5,0.5);
\draw (7,2.5) -- (7,2);
\draw (6,2) -- (6,1.5);
\draw (5.5,1.5) -- (5.5,1);
\draw (4.5,1) -- (4.5,0.5);
\draw (4,0.5) -- (4,0);
\node at (3.3,1.4){$\alpha$};
\node at (1,2.8){$l$};
\node at (-0.3,1.3){$k$};
\draw (0,0) -- (0,-5.5);
\draw (0,-5.5) -- (0.5,-5.5);
\draw (0.5,-5.5) -- (0.5,-5);
\draw (0.5,-5) -- (1,-5);
\draw (1,-5) -- (1,-4.5);
\draw (1,-4.5) -- (1.5,-4.5);
\draw (1.5,-4.5) -- (1.5,-2);
\draw (1.5,-2) -- (2,-2);
\draw (2,-2) -- (2,0);
\node at (0.8,-2.5){$\beta'$};
\draw (3.5,0) -- (3.5,-0.5);
\draw (3.5,-0.5) -- (3,-0.5);
\draw (3,-0.5) -- (3,-1);
\draw (3,-1) -- (2.5,-1);
\draw (2.5,-1) -- (2.5,-1.5);
\draw (2.5,-1.5) -- (2,-1.5);
\node at (2.5,-0.5){$\gamma$};
\node at (13,0){The resulting four parts consist of};
\node at (13, -1){a $k$-by-$l$ rectangle on the top left,};
\node at (13, -2){diagram $\alpha$ on the top right,};
\node at (13,-3){diagram $\beta'$ on the bottom left,};
\node at (13, -4){and diagram $\gamma$ on the bottom right.};
\end{tikzpicture}
\end{figure}

\noindent
We have for $i\le k$ and $j\le l$
$$
\alpha_i = \l_i - l,
$$
$$
\beta_j = \l_j'-k,
$$
and for $i\le \ell(\gamma) = \l'_{l+1}-k$
$$
\gamma_i = \l_{k+i} -l.
$$
Clearly, $\l$ is uniquely determined by the partitions $\alpha,\beta,\gamma$ (we recover $k$ and $l$ as $\ell(\alpha)$ and $\ell(\beta)$ correspondingly).

\begin{definition}\label{defcutdiag}
We write $\l=[\alpha, \beta, \gamma]$ if the partitions $\alpha, \beta$ and $\gamma$ are obtained from $\lambda$ by the cutting procedure described above for $k = \ell(\alpha), l = \ell(\beta)$. See Figure \ref{fig1}.
\end{definition}

 Note that not any triple of partitions produces a well-defined $\l$: we should also require that $\gamma_1\le \alpha_k$ and $\gamma'_1\le \beta_l$.

We will now show how one can construct two sequences of highest weights $\bl^\nn$ and $\bm^\nn$ such that for any $\bn\in\cp\times\cp$ the multiplicity
$$
\dim\Hom_{GL_n}(V^\nn_\bn,V^\nn_{\bl^\nn}\otimes(V^\nn_{\bm^\nn})^*)
$$
is constant for almost all $n$.

\begin{definition}
Let $\l,\mu$ be some partitions. Then we write $\l \subset \mu$ if $\l_i \le \mu_i$ for all $i$.
\end{definition}

Let us first consider the case of polynomial $GL_n$-representations. That is, we let $\bl = (\l, \emptyset)$ and $\bm=(\mu,\emptyset)$.

The following theorem is due to R.K. Brylinski \cite{B}. It was originally proved with the assumption $|\l|=|\mu|$. We will repeat the proof here, omitting this assumption. 

Let $s_\l$ denote the symmetric Schur polynomial corresponding to $\l\in\cp$, and let $(\cdot,\cdot)$ be the symmetric form making $\{s_\l\}$ into an orthonormal basis in the space of symmetric polynomials. Let $c_{\mu,\nu}^\l = c_{\nu, \mu}^\l$ denote the Littlewood-Richardson coefficient, i.e. the coefficient of $s_\l$ in the product $s_\mu s_\nu$:
$$
s_\mu s_\nu = \sum_\l c_{\mu,\nu}^\l  s_\l.
$$
Let $s_{\l/\mu}$ denote the skew Schur polynomial associated with the skew diagram $\l/\mu$. We have
$$
(s_\l,s_\mu s_\nu) = (s_{\l/\mu}, s_\nu).
$$

\begin{theorem}\cite{B} \label{tgupta} . Let $\bn=(\nu,\bar\nu)\in\cp\times\cp$ be some bipartition. Let $\bl=(\l,\emptyset), \bm=(\mu,\emptyset)$. \\ Suppose $n\ge \min(\ell(\l)+\ell(\bar\nu), \ell(\mu)+\ell(\nu))$. Then
$$
\dim\Hom_{Gl_n}(V_{\bn}^\nn, V^\nn_\bl\otimes (V_\bm^\nn)^*) = (s_{\l/\nu}, s_{\mu/\bar \nu}).
$$
\end{theorem}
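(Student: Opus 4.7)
The plan is to reduce the multiplicity computation to a polynomial character identity via a determinant-shift trick, and then apply the Littlewood--Richardson rule. First I would use Frobenius reciprocity to rewrite
$$
\dim\Hom_{GL_n}(V_\bn,\, V_\bl\otimes V_\bm^*) = \dim\Hom_{GL_n}(V_\bn\otimes V_\bm,\, V_\bl),
$$
so that the problem becomes extracting the multiplicity of the polynomial irreducible $V_\bl = V_\l$ in $V_\bn\otimes V_\bm$ — equivalently, the coefficient of the Schur polynomial $s_\l$ in the character of $V_\bn\otimes V_\bm$.

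To deal with the non-polynomial representation $V_\bn=V_{(\nu,\bar\nu)}$, I would tensor by $(\det V)^N$ for $N\geq\bar\nu_1$. This converts $V_\bn\otimes\det^N$ into a genuine polynomial representation $V_\gamma$ whose highest weight is obtained by adding $N$ to every coordinate of $[\bn]_n$, and simultaneously shifts the target $V_\bl$ to the polynomial $V_{[\bl]_n+N(1,\ldots,1)}$. The multiplicity is unchanged, and the problem becomes a classical polynomial Littlewood--Richardson computation. Applying the Littlewood--Richardson rule and then regrouping the sum by an intermediate partition $\tau$ (which encodes the overlap between the ``positive'' and ``negative'' parts of $\bn$), I would argue that in the stable range the LR expansion decouples into two independent LR coefficients, producing
$$
\sum_\tau c^\l_{\tau\nu}\, c^\mu_{\tau\bar\nu}.
$$
The identification of this sum with $(s_{\l/\nu},\, s_{\mu/\bar\nu})$ is then immediate from the standard expansion $s_{\alpha/\beta} = \sum_\tau c^\alpha_{\tau\beta}\, s_\tau$ and the orthonormality of Schur polynomials under $(\cdot,\cdot)$.

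The main obstacle will be verifying that the stability hypothesis $n\geq\min(\ell(\l)+\ell(\bar\nu),\,\ell(\mu)+\ell(\nu))$ is precisely what is required for the Littlewood--Richardson expansion to decouple in this way — i.e., that the ``modification rules'' which arise when partitions have more than $n$ rows do not introduce extra terms. The two alternatives inside the $\min$ correspond to two symmetric reductions: applying the determinant shift as above (which gives the bound involving $\ell(\mu)+\ell(\nu)$), or first invoking the duality
$$
\Hom_{GL_n}(V_\bn,\,V_\bl\otimes V_\bm^*)\cong \Hom_{GL_n}(V_{(\bar\nu,\nu)},\,V_\bm\otimes V_\bl^*)
$$
and then applying the shift on the other side (yielding $n\geq\ell(\l)+\ell(\bar\nu)$). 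The combinatorial bookkeeping to confirm that exactly the claimed terms survive — no more, no fewer — is the most delicate step.
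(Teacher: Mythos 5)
Your proposal is correct and follows essentially the same route as the paper: twist by a power of the determinant to land among polynomial representations, apply the Littlewood–Richardson rule, and observe that in the stable range the relevant skew diagram falls into two disconnected pieces, so its skew Schur polynomial factors as a product. The one step to make precise is that the ``decoupling'' is exactly this factorization of the skew Schur polynomial of a disconnected shape (the paper's $s_{\theta/(n\backslash\mu)}=s_\nu\,s_{\mu/\bar\nu}$), for which $n\geq\ell(\mu)+\ell(\nu)$ (or your symmetric $n\geq\ell(\l)+\ell(\bar\nu)$) is precisely the row-disjointness condition --- the auxiliary partition $\tau$ appears only afterward, when each skew Schur polynomial is expanded in the Schur basis, rather than through a regrouping of the LR sum itself.
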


\textit{Proof.}

For any bipartition $\bn=(\nu,\bar\nu)$ let  $^t\bn = (\bar\nu, \nu)$.  We have 
$$
(V_\bn^\nn)^* = V^\nn_{^t\bn}.
$$
Since 
\begin{equation}\label{eqdualmult}
[V_\bl^\nn\otimes (V^\nn_\bm)^*:V^\nn_\bn] = [V^\nn_\bm\otimes (V^\nn_\bl)^*: V^\nn_{^t\bn}],
\end{equation}
without loss of generality we can assume $n\ge \ell(\mu)+\ell(\nu).$

For an integer $a$, let $(a^n)$ denote the weight $(a,a,\ldots,a)\in \h_n^*$ and let $\boldsymbol\det^a$ denote the corresponding simple representation of $GL_n$. 
Let us take $a\ge \rm{max}(\bar\nu_1, \mu_1)$, then the weights $[\bn]_n+(a^n)$ and $[^t\bm]_n+(a^n)$ are dominant and all of their coordinates are non-negative, and thus $V_\bn^\nn\otimes \boldsymbol\det^a$ and $(V_\bm^\nn)^*\otimes \boldsymbol\det^a$ are polynomial representations of $GL_n$.

For any bipartition $\bn = (\nu, \bar\nu)$ and any $a\ge \bar\nu_1$ let  $\bn+(a^n)$ denote the partition corresponding to (the non-negative integral dominant weight)  $[\bn]_n + (a^n)$. 

Clearly,
$$
[V_\bl^\nn\otimes (V^\nn_\bm)^*:V^\nn_\bn] = [V_\bl^\nn\otimes (V^\nn_\bm)^*\otimes \boldsymbol\det^a:V^\nn_\bn\otimes \boldsymbol\det^a] = [V_\bl^\nn\otimes V^\nn_{^t\bm+(a^n)}:V^\nn_{\bn+(a^n)}],
$$
so we can use the Littlewood-Richardson rule to compute this multiplicity. 

One of the consequences of this is that $^t\bm + (a^n)\subset \bn + (a^n)$. Using equation \ref{eqdualmult}, we also see that $^t\bl+(b^n) \subset {^t\bn} + (b
^n)$ for $b\ge\rm{max}(\l_1, \nu_1)$. Thus, in order for the multiplicity $\dim\Hom_{Gl_n}(V_{\bn}^\nn, V^\nn_\bl\otimes (V_\bm^\nn)^*)$ to be nonzero, we must have $\nu\subset \l$ and $\bar\nu\subset \mu$ and $|\l|-|\mu|=|\nu|-|\bar\nu|$. So, we will assume these relations hold.

\begin{tikzpicture}
\draw[pattern=north west lines, pattern color = black!20!white!, draw=white, draw opacity=0.2] (0,1) rectangle (6,0.5);
\draw[pattern=north west lines, pattern color = black!20!white!, draw=white, draw opacity=0.2] (1.5,1.5) rectangle (6,1);
\draw[pattern=north west lines, pattern color = black!20!white!, draw=white, draw opacity=0.2] (2,2) rectangle (6,1.5);
\draw[pattern=north west lines, pattern color = black!20!white!, draw=white, draw opacity=0.2] (2.5,2.5) rectangle (6,2);
\draw[pattern=north west lines, pattern color = black!20!white!, draw=white, draw opacity=0.2] (3.5,3) rectangle (6,2.5);
\draw[pattern=north west lines, pattern color = black!20!white!, draw=white, draw opacity=0.2] (4.5,3.5) rectangle (6,3);
\filldraw[fill = blue!10!white!, draw = blue!10
!white!] (0,5) rectangle (6,3.5);
\filldraw[fill = blue!10!white!, draw = blue!10
!white!] (0,3.5) rectangle (4.5,3);
\filldraw[fill = blue!10!white!, draw = blue!10
!white!] (0,3) rectangle (3.5,2.5);
\filldraw[fill = blue!10!white!, draw = blue!10
!white!] (0,2.5) rectangle (2.5,2);
\filldraw[fill = blue!10!white!, draw = blue!10
!white!] (0,2) rectangle (2,1.5);
\filldraw[fill = blue!10!white!, draw = blue!10
!white!] (0,1.5) rectangle (1.5,1);
\draw (0,5)--(6,5);
\draw (6,5)--(6,3.5);
\draw (6,3.5)--(4.5,3.5);
\draw (4.5,3.5)--(4.5,3);
\draw (4.5,3)--(3.5,3);
\draw (3.5,3)--(3.5,2.5);
\draw (3.5,2.5)--(2.5,2.5);
\draw (2.5,2.5)--(2.5,2);
\draw (2.5,2)--(2,2);
\draw (2,2)--(2,1.5);
\draw (2,1.5)--(1.5,1.5);
\draw (1.5,1.5)--(1.5,1);
\draw (1.5,1)--(1,1);
\draw (1,1)--(0,1);
\draw (0,5)--(0,1);
\draw[dotted] (0,1)--(0,0.5);
\draw[dotted] (0,0.5)--(6,0.5);
\draw[dotted] (6,0.5)--(6,5);
\node at (3,5.65){$\mu_1$};
\node at (2.3,4){$(n\backslash\mu)$};
\node at (4,1.5){$\mu$};
\node at (11, 3){Let $(n\backslash \mu)$ denote the partition $^t\bm+(\mu_1^n)$.};

\draw[decorate,decoration={brace,amplitude=10pt, mirror, raise=4pt}](0,5)--(0,0.5);
\draw[decorate,decoration={brace,amplitude=10pt,raise=4pt}](0,5)--(6,5);
\node at (-0.7,2.75){$n$};
\end{tikzpicture}
$$
$$
We have
$$
\dim\Hom_{GL_n}(V^\nn_\bn,V^\nn_\bl\otimes V^\nn_{^t\bm}) = \dim\Hom_{GL_n}(V^\nn_{\bn+(\mu_1^n)}, V^\nn_\bl\otimes V^\nn_{(n\backslash \mu)}) = (s_{\bn+(\mu_1^n)}, s_\l s_{(n\backslash\mu)}).
$$
Let us denote the partition $\bn+(\mu_1^n)$ by $\theta$. Then
$$
(s_\theta, s_\l s_{(n\backslash \mu)})  = (s_{\theta/(n\backslash \mu)}, s_\l).
$$
Let us look at the diagrams of $\theta$ and $\theta/(n\backslash \mu)$. The latter is a well defined skew diagram, since the relation $(n\backslash \mu)\subset\theta$ is equivalent to $\bar\nu\subset \mu$.

\begin{tikzpicture}
\filldraw[fill=blue!10!white!,draw=blue!10!white!] (2,10) rectangle (6,4);
\filldraw[fill=blue!10!white!,draw=blue!10!white!] (2,4) rectangle (5.5,3.5);
\filldraw[fill=blue!10!white!,draw=blue!10!white!] (2,3.5) rectangle (5,3);
\filldraw[fill=blue!10!white!,draw=blue!10!white!] (2,3) rectangle (4.5,2.5);
\filldraw[fill=blue!10!white!,draw=blue!10!white!] (6,10) rectangle (8,9.5);
\filldraw[fill=blue!10!white!,draw=blue!10!white!] (6,9.5) rectangle (7.5,9);
\filldraw[fill=blue!10!white!,draw=blue!10!white!] (6,9) rectangle (7,8.5);
\draw (2,10)--(8,10);
\draw (8,10)--(8,9.5);
\draw (8,9.5)--(7.5,9.5);
\draw (7.5,9.5)--(7.5,9);
\draw (7.5,9)--(7,9);
\draw (7,9)--(7,8.5);
\draw (7,8.5)--(6,8.5);
\draw[dotted] (6,10)--(6,8.5);
\node at (6.8,9.3){$\nu$};
\draw (6,8.5)--(6,4);
\draw (6,4)--(5.5,4);
\draw (5.5,4)--(5.5,3.5);
\draw (5.5,3.5)--(5,3.5);
\draw (5,3.5)--(5,3);
\draw (5,3)--(4.5,3);
\draw (4.5,3)--(4.5,2.5);
\draw (4.5,2.5)--(2,2.5);
\draw (2,2.5)--(2,10);
\node at (3.9,7){$\theta$};
\draw[dotted] (4.5,2.5)--(6,2.5);
\draw[dotted] (6,2.5)--(6,4);
\node at (5.4,3.1){$\bar\nu$};
\node at (4.1,10.7){$\mu_1$};
\draw[decorate,decoration={brace,amplitude=10pt,raise=4pt}](2,10)--(6,10);
\filldraw[fill=green!10!white!,draw=green!10!white!] (14,10) rectangle (16,9.5);
\filldraw[fill=green!10!white!,draw=green!10!white!] (14,9.5) rectangle (15.5,9);
\filldraw[fill=green!10!white!,draw=green!10!white!] (14,9) rectangle (15,8.5);
\filldraw[fill=green!10!white!,draw=green!10!white!] (10,3) rectangle (12.5,2.5);
\filldraw[fill=green!10!white!,draw=green!10!white!] (11.5,3.5) rectangle (13,3);
\filldraw[fill=green!10!white!,draw=green!10!white!] (12,4) rectangle (13.5,3.5);
\filldraw[fill=green!10!white!,draw=green!10!white!] (13,4.5) rectangle (14,4);
\filldraw[fill=green!10!white!,draw=green!10!white!] (13.5,5) rectangle (14,4.5);
\draw[pattern=north west lines, pattern color = black!20!white!, draw=white, draw opacity=0.2] (10,10) rectangle (14,5);
\draw[pattern=north west lines, pattern color = black!20!white!, draw=white, draw opacity=0.2] (10,5) rectangle (13.5, 4.5);
\draw[pattern=north west lines, pattern color = black!20!white!, draw=white, draw opacity=0.2] (10,4.5) rectangle (13,4);
\draw[pattern=north west lines, pattern color = black!20!white!, draw=white, draw opacity=0.2] (10,4) rectangle (12,3.5);
\draw[pattern=north west lines, pattern color = black!20!white!, draw=white, draw opacity=0.2] (10,3.5) rectangle (11.5,3);
\draw[pattern=north west lines, pattern color = black!20!white!, draw=black, draw opacity=0.2] (15,6) rectangle (15.5,5.5);
\node at (16.3,5.75){--$~ (n\backslash\mu$)};
\filldraw[fill=green!10!white!,draw=black] (15,7) rectangle (15.5,6.5);
\node at (16.5,6.75){--$~
\theta/(n\backslash\mu)$};
\draw (14,10)--(16,10);
\draw (16,10)--(16,9.5);
\draw (16,9.5)--(15.5,9.5);
\draw (15.5,9.5)--(15.5,9);
\draw (15.5,9)--(15,9);
\draw (15,9)--(15,8.5);
\draw (15,8.5)--(14,8.5);
\draw (14,8.5)-- (14,10);
\draw (14,5)--(14,4);
\draw (14,4)--(13.5,4);
\draw (13.5,4)--(13.5,3.5);
\draw (13.5,3.5)--(13,3.5);
\draw (13,3.5)--(13,3);
\draw (13,3)--(12.5,3);
\draw (12.5,3)--(12.5,2.5);
\draw (12.5,2.5)--(10,2.5);
\draw (10,2.5)--(10,3);
\node at (14.7,9.3){$\nu$};
\draw (14,5)--(13.5,5);
\draw (13.5,5)--(13.5,4.5);
\draw (13.5,4.5)--(13,4.5);
\draw (13,4.5)--(13,4);
\draw (13,4)--(12,4);
\draw (12,4)--(12,3.5);
\draw (12,3.5)--(11.5,3.5);
\draw (11.5,3.5)--(11.5,3);
\draw (11.5,3)--(10,3);
\draw[dotted](10,3)--(10,10);
\draw[dotted](10,10)--(14,10);
\draw[dotted](14,8.5)--(14,5);
\draw[dotted](12.5,2.5)--(14,2.5);
\draw[dotted](14,2.5)--(14,4);
\node at (12.5, 3.5){$\mu/\bar\nu$};  
\end{tikzpicture} 

$$
$$

It is easy to see that the diagram of $\theta/(n\backslash \mu)$ is actually a disjoint union of the diagram of $\nu$ and the diagram of $\mu/\bar\nu$ rotated $180$ degrees (because $n\ge \ell(\mu)+\ell(\nu)$). Therefore, the skew Schur polynomial $s_{\theta/(n\backslash \mu)}$ splits into the product of two other skew Schur polynomials:
$$
s_{\theta/(n\backslash \mu)} = s_\nu s_{\mu/\bar\nu}.
$$
Thus, we get that
$$
\dim\Hom_{GL_n}(V^\nn_\bn, V^\nn_\bl\otimes (V^\nn_\bm)^*) = (s_\nu s_{\mu/\bar\nu}, s_\l) = (s_{\mu/\bar\nu},s_{\l/\nu}).
$$
\blacksquare

\begin{definition}\label{deftheta}
We say that a sequence of partitions $\theta^\nn$ is \textbf{nice} if for every positive integer $K$ the set 
\begin{equation}
\{n\in\mathbb N|~\forall~i\le\ell(\theta):~\theta^\nn_i-\theta^\nn_{i+1}>K\} 
\end{equation}
 lies in $\cf$. We denote this set $U_K(\theta^\nn)$.
\end{definition}

\begin{definition}\label{defll}
Given some sequence of real numbers $a^\nn$, we write $a^\nn \ll n$ if for every positive integer $K$ the set $$\{n\in \mathbb N|~n-a^\nn> K\}$$
lies in $\cf$.
\end{definition}

\begin{definition}
We say that a sequence $a^\nn$ of real numbers is bounded if there exists $K>0$ such that the set 
$$\{n\in \mathbb N|~n-a^\nn< K\}$$
lies in $\cf$.
\end{definition}

\begin{definition}\label{dboldlambda}
Let us fix $k,l \in \mathbb Z_{\ge 0}, \gamma\in \mathcal P$. Let $\l^{(n)}$ be a sequence of partitions with $\ell(\l^\nn)\ll n$, such that $$\l^\nn =[\alpha^\nn,\beta^\nn,\gamma],$$
(in the sense of Definition \ref{defcutdiag}) with $\ell(\alpha^\nn)=k, \ell(\beta^\nn)= l$ for almost all $n$. Assume that the sequences of partitions $\alpha^\nn, \beta^\nn$ are \textbf{nice} (in the sense of Definition \ref{deftheta}).

Let us fix some $\mathbf {a}=(a_1,\ldots,a_k)\in\Z^k$ and $\mathbf b=(b_1,\ldots,b_l)\in\Z^l$. We denote $|\mathbf a|= \sum_{i=1}^k a_i$ and $|\mathbf b|=\sum_{j=1}^l b_j.$ Then for almost all $n$ and any $\delta\in \cp$
$$
\mu^\nn = [\alpha^\nn+\mathbf a,\beta^\nn +\mathbf b,\delta]
$$
is a well-defined partition. 

Define $\bl^\nn= (\l^\nn,\emptyset), \bm^\nn= (\mu^\nn,\emptyset)$.
\end{definition}

\begin{proposition}\label{cgupta} Let $\bl^\nn, \bm^\nn$ be as in Definition \ref{dboldlambda}.

Then for any $\bn = (\nu, \bar \nu)\in \mathcal P\times \mathcal P$ the multiplicity of $V_\bn^\nn$ in $\Hom_\ck(V^\nn_{\bm^\nn}, V^\nn_{\bl^\nn})$ is constant for almost all $n$.
\end{proposition}
\begin{proof}
By Theorem \ref{tgupta}, this multiplicity is equal to
$$
(s_{\l^\nn/\nu}, s_{\mu^\nn/\bar\nu}) = \sum_{\eta\subset \l^\nn,~\mu^\nn} c^{\l^\nn}_{\nu, \eta}c^{\mu^\nn}_{\bar\nu,\eta}.
$$
The sum is taken over $\eta$ inside both $\l^\nn$ and $\mu^\nn$ with 
$$
|\l^\nn|-|\eta|=|\nu|,
$$
$$
|\mu^\nn|-|\eta|=|\bar\nu|.
$$
Clearly, it is nonzero only for $\bn$ with $|\bar\nu|-|\nu| = |\mu^\nn|-|\l^\nn|=|\mathbf a|+|\mathbf b|+|\delta|-|\gamma|.$

Since $\mathbf a$ and $\mathbf b$ are constant, $\alpha^\nn+\mathbf a,\beta^\nn+\mathbf b$ are \textbf{nice} as well.  Let $\max(|\nu|,|\bar\nu|)= K$ us take $U = U_K(\alpha^\nn)\cap U_K(\beta^\nn)\cap U_K(\alpha^\nn + \mathbf a)\cap U_K(\beta^\nn+\mathbf b) $, where the sets $U_K(\cdot)$ are as in Definition \ref{deftheta}. Then, by definition of a \textbf{nice} sequence of partitions, $U\in \cf$. From now on assume $n\in U$.

As $\eta\subset \l^\nn, \mu^\nn$, we can find $k'\le k,l'\le l$, such that  for the corresponding decomposition  $\eta = [\sigma, \tau, \epsilon]$,  we have $\sigma\subset\alpha^\nn, \alpha^\nn+\mathbf a$, $\tau\subset \beta^\nn, \beta^\nn+\mathbf b$, and $\epsilon\subset \gamma,\delta$. Thus, for all $n\in U$ we have that $\l^\nn/\eta$ is the disjoint union of skew diagrams $\alpha^\nn/\sigma, (\beta^\nn)'/\tau'$ and $\gamma/\epsilon$ (with the similar statement for $\mu^\nn/\eta$). Moreover, $\alpha^\nn/\sigma$ is the disjoint union of (at most $k$) rows of some lengths $c_1,\ldots, c_k\in\Z_{\ge 0}$; and $(\beta^\nn)'/\tau'$ is the disjoint union of (at most $l$) columns of some lengths $d_1,\ldots, d_l\in \Z_{\ge 0}$. Let us denote $\mathbf c= (c_1,\ldots,c_k)\in \Z_{\ge 0}^k$ and $\mathbf d= (d_1,\ldots, d_l)\in \Z_{\ge 0}^l$. Then $(\alpha^\nn+\mathbf a)/\sigma$ is the disjoint union of rows of lengths $\mathbf c+\mathbf a$ and $(\beta^\nn+\mathbf b)'/\tau'$ is the disjoint union of columns with lengths $\mathbf d+\mathbf b$. Thus we must have
$$
\mathbf c\in(-\mathbf a+\Z^k_{\ge 0})\cap \Z^k_{\ge 0},
$$
$$
\mathbf d\in(-\mathbf b+\Z^l_{\ge 0})\cap \Z^l_{\ge 0}.
$$

Clearly, $\eta$ is uniquely defined by $\epsilon\subset\gamma,\delta$ and $\mathbf c,\mathbf d$ as above. 

Given some $\mathbf c\in \Z^k_{\ge 0}$ and $\mathbf d\in \Z^l_{\ge 0}$ and $\epsilon\subset \gamma\in \cp$ let us construct a skew diagram $\widetilde \l/\widetilde \eta(\mathbf c,\mathbf d,\gamma/\epsilon)$ in the following way:

We let $\widetilde\l(\mathbf c,\mathbf d,\gamma) = [\widetilde\alpha,\widetilde\beta, \gamma]$ with 
$$
\widetilde\alpha_i = \gamma_1 + \sum_{j=i}^{k} c_{j}, 
$$
$$
\widetilde\beta_i = \gamma_1'+\sum_{j=i}^l d_j.
$$

Let $\widetilde\eta(\mathbf c,\mathbf d,\epsilon) = [\widetilde\alpha - \mathbf c, \widetilde\beta - \mathbf d, \epsilon]$. Then we put
\begin{equation}\label{eqskew}
 \widetilde \l/\widetilde\eta(\mathbf c,\mathbf d,\gamma/\epsilon) = \widetilde\l(\mathbf c,\mathbf d,\gamma)/\widetilde\eta(\mathbf c,\mathbf d,\epsilon).   
\end{equation}

\begin{tikzpicture}

\filldraw[fill=blue!10!white!, draw=blue!10!white!] (6,8) rectangle (7,7.5);
\filldraw[fill=blue!10!white!, draw=blue!10!white!] (4.5,7.5) rectangle (6,7);
\filldraw[fill=blue!10!white!, draw=blue!10!white!] (3,7) rectangle (4.5,6.5);
\filldraw[fill=green!10!white!, draw=green!10!white!] (0,2) rectangle (0.5,0);
\filldraw[fill=green!10!white!, draw=green!10!white!] (0.5,3.5) rectangle (1,2);
\filldraw[fill=green!10!white!, draw=green!10!white!] (1,4.5) rectangle (1.5,3.5);
\draw (0,8)--(7,8);
\draw (7,8)--(7,7.5);
\draw (7,7.5)--(6,7.5);
\draw (6,7.5)--(6,7);
\draw (6,7)--(4.5,7);
\draw (4.5,7)--(4.5,6.5);
\draw (4.5,6.5)--(3,6.5);
\draw[dotted] (0,6.5)--(3,6.5);
\draw (3,6.5)--(3,6);
\draw (3,6)--(2.5,6);
\draw (2.5,6)--(2.5,5.5);
\draw (2.5,5.5)--(2,5.5);
\draw (2,5.5)--(2,4.5);
\draw (2,4.5)--(1.5,4.5);
\draw (1.5,4.5)--(1.5,3.5);
\draw (1.5,3.5)--(1,3.5);
\draw (1,3.5)--(1,2);
\draw (1,2)--(0.5,2);
\draw (0.5,2)--(0.5,0);
\draw (0.5,0)--(0,0);
\draw (0,0)--(0,8);
\draw[dotted] (1.5,8)--(1.5,4.5);
\draw[pattern = dots, pattern color = blue!40!white!20!green!, draw=white, draw opacity=0.2] (1.5,6.5) rectangle (3,6);
\draw[pattern = dots, pattern color = blue!40!white!20!green!, draw=white, draw opacity=0.2] (1.5,6) rectangle (2.5,5.5);
\draw[pattern = dots, pattern color = blue!40!white!20!green!, draw=white, draw opacity=0.2] (1.5,5.5) rectangle (2,4.5);
\node at (6.5,7.75){$c_1$};
\node at (5.2,7.25){$\dots$};
\node at (3.85, 6.75){$c_k$};
\node at (2,5.9){$\gamma$};
\node at (1.28,4.05){$d_l$};
\node at (0.75,2.9){$\vdots$};
\node at (0.28,1.1){$d_1$};
\node at (3.5,8.6){$\Large{\widetilde\l(\mathbf c,\mathbf d,\gamma)}$};
\filldraw[fill=red!10!white!, draw=black] (14,8) rectangle (15,7.5);
\filldraw[fill=red!10!white!, draw=black] (12.5,7.5) rectangle (14,7);
\filldraw[fill=red!10!white!, draw=black] (11,7) rectangle (12.5,6.5);
\filldraw[fill=red!10!white!, draw=black] (8,2) rectangle (8.5,0);
\filldraw[fill=red!10!white!, draw=black] (8.5,3.5) rectangle (9,2);
\filldraw[fill=red!10!white!, draw=black] (9,4.5) rectangle (9.5,3.5);
\filldraw[fill=red!10!white!, draw=black] (10.5,6.5) rectangle (11,6);
\filldraw[fill=red!10!white!, draw=black] (10,6) rectangle (10.5,5.5);
\filldraw[fill=red!10!white!, draw=black] (9.5,5.5) rectangle (10,4.5);
\draw[pattern = crosshatch,  pattern color = black!20!white!, draw = white] (9.5,6.5) rectangle (10.45,6.05);
\draw[pattern = crosshatch,  pattern color = black!20!white!, draw = white] (9.5,6.05) rectangle (9.95,5.55);
\draw[dotted][draw = black!40!white] (9.5,6.5)--(10.45,6.5);
\draw[dotted][draw = black!40!white] (9.5,6.5)--(9.5,5.55);
\draw[dotted][draw = black!40!white] (10.45,6.5)--(10.45,6.05);
\draw[dotted][draw = black!40!white] (9.5,5.55)--(9.95,5.55);
\draw[dotted][draw = black!40!white] (9.95,5.55)--(9.95,6.05);
\draw[dotted][draw = black!40!white] (9.95,6.05)--(10.45,6.05);
\draw[pattern = vertical lines,  pattern color = blue!40!white!20!green!, draw = blue!20!white!, draw opacity = 0] (10.52,6.48) rectangle (10.98,6.02);
\draw[pattern = vertical lines,  pattern color = blue!40!white!20!green!, draw = blue!20!white!, draw opacity = 0] (10.02,5.98) rectangle (10.48,5.52);
\draw[pattern = vertical lines,  pattern color = blue!40!white!20!green!, draw = blue!20!white!, draw opacity = 0] (9.52,4.52) rectangle (9.98,5.48);
\node at (13.25,3.3){-- $~ \Large{\widetilde\l/\widetilde\eta(\mathbf c,\mathbf d,\gamma/\epsilon)}$};
\filldraw[fill = red!10!white!, draw = black] (11.2,3.55) rectangle (11.7,3.05);
\draw[pattern=crosshatch, pattern color = black!20!white!, draw = white] (11.2,2.55) rectangle (11.7,2.05);
\node at (12.15,2.3){-- $~\Large{\epsilon}$};
\draw[dotted][draw = black!40!white](11.2,2.55) rectangle (11.7,2.05);
\draw[pattern = vertical lines,  pattern color = blue!40!white!20!green!, draw = blue!20!white!, draw opacity = 0] (11.2,1.55) rectangle (11.7,1.05);
\node at (12.35,1.3){-- $~\Large{\gamma/\epsilon}$};
\end{tikzpicture}
$$
$$
Now, $\l^\nn/\eta$ and $\widetilde\l/\widetilde\eta (\mathbf c,\mathbf d,\gamma/\epsilon)$ have the same disjoint parts in the same order. Thus, the number of Littlewood-Richardson tableaux of weight $\nu$ on them is the same. Let us call this number $c_\nu({\mathbf c,\mathbf d,\gamma/\epsilon})$. We have
$$
\sum_{\eta\subset \l^\nn,~\mu^\nn} c^{\l^\nn}_{\nu, \eta}c^{\mu^\nn}_{\bar\nu,\eta} = \sum_{\mathbf c,\mathbf d,\epsilon} c_\nu({\mathbf c,\mathbf d,\gamma/\epsilon}) c_{\bar \nu}({\mathbf c+\mathbf a,\mathbf d+\mathbf b,\delta/\epsilon}),
$$
where the sum is taken over 
$$
\mathbf c\in(-\mathbf a+\Z^k_{\ge 0})\cap \Z^k_{\ge 0},
$$
$$
\mathbf d\in(-\mathbf b+\Z^l_{\ge 0})\cap \Z^l_{\ge 0}.
$$
and $\epsilon \subset \gamma,\delta$ with
$$
|\mathbf c|+|\mathbf d|+|\gamma|-|\epsilon|=|\nu|,
$$
$$
|\mathbf c|+|\mathbf d|+|\mathbf a|+|\mathbf b|+|\delta|-|\epsilon| =|\bar\nu|.
$$
Thus, this sum depends only on $k,l,\mathbf a,\mathbf b, \gamma, \delta$.

\end{proof}

Note that in the proof of Proposition \ref{cgupta} not only have we proved that certain multiplicities do not depend on $n$, but also provided the algorithm for computing them. We would like to use this result further in the paper, which motivates the following definition.

\begin{definition}\label{defskew}
For fixed $\mathbf c\in \mathbb Z^k_{\ge 0}, \mathbf d\in\mathbb Z^l_{\ge 0}$ and partitions $\epsilon, \gamma, \nu$ with $\epsilon\subset \gamma$ define the constant $c_\nu(\mathbf c, \mathbf d, \gamma/\epsilon)$ to be the number of Littlewood-Richardson tableaux of weight $\nu$ on the skew-diagram $\widetilde \l/\widetilde\eta(\mathbf c, \mathbf d, \gamma/\epsilon)$ defined in \ref{eqskew} in the proof of Proposition \ref{cgupta}.
\end{definition}

\begin{corollary}\label{cskew}
Let $\bl^\nn, \bm^\nn$ be as in Definition \ref{dboldlambda}.

Then for any $\bn = (\nu, \bar \nu)\in \mathcal P\times \mathcal P$ the multiplicity of $V_\bn^\nn$ in $\Hom_\ck(V^\nn_{\bm^\nn}, V^\nn_{\bl^\nn})$ is constant for almost all $n$ and equal to
$$
\sum_{\eta\subset \l^\nn,~\mu^\nn} c^{\l^\nn}_{\nu, \eta}c^{\mu^\nn}_{\bar\nu,\eta} = \sum_{\mathbf c,\mathbf d,\epsilon} c_\nu({\mathbf c,\mathbf d,\gamma/\epsilon}) c_{\bar \nu}({\mathbf c+\mathbf a,\mathbf d+\mathbf b,\delta/\epsilon}),
$$
where the sum is taken over 
$$
\mathbf c\in(-\mathbf a+\Z^k_{\ge 0})\cap \Z^k_{\ge 0},
$$
$$
\mathbf d\in(-\mathbf b+\Z^l_{\ge 0})\cap \Z^l_{\ge 0}.
$$
and $\epsilon \subset \gamma,\delta$ with
$$
|\mathbf c|+|\mathbf d|+|\gamma|-|\epsilon|=|\nu|,
$$
$$
|\mathbf c|+|\mathbf d|+|\mathbf a|+|\mathbf b|+|\delta|-|\epsilon| =|\bar\nu|.
$$

\end{corollary}

\subsection{Constructing the bimodules $\underline{\mathrm{Hom}}(\bm, \bl)$}

Our goal now is to construct an object of $\Ind\cc_t$ out of $\Hom_\ck(V^\nn_{\bm^\nn}, V^\nn_{\bl^\nn})\in \cc_n$ with $\bl^\nn,\bm^\nn$ as in Definition \ref{dboldlambda}. For this purpose we would like to introduce some filtration on those bimodules and then take the filtered ultraproduct. Note that if a $(\mathfrak g, \mathfrak g)$ bimodule is obtained as a quotient of $U(\mathfrak g)$ by some two-sided ideal (that is,  it is spherical as in Definition \ref{dspherical}), then it naturally inherits the PBW-filtration from $U(\mathfrak g)$. So, if we let $\bl^\nn = \bm^\nn$, we will get a natural filtration $\widetilde F$ on $\End_\ck(V^\nn_{\bl^\nn})$ induced from the PBW-filtration on $U(\mathfrak{g}_n)$ (as the action map $U(\mathfrak g_n)\to \End_\ck(V^\nn_{\bl^\nn})$ is surjective, since $V^\nn_{\bl^\nn}$ is simple). 

\begin{definition}
\label{dend}

Let $\bl^\nn$ be as in Definition \ref{dboldlambda}.   

Let $\widetilde F^k \End_\ck(V^\nn_{\bl^\nn})$ be the image of $\widetilde F^k U(\mathfrak{g}_n)$ under the action map
$$
U(\mathfrak{g}_n)\to \End_\ck(V^\nn_{\bl^\nn}),
$$
where $\widetilde F$ is the PBW-filtration on $U(\mathfrak{g}_n)$.

Define $\bl$ to be a triple $(\alpha, \beta, \gamma)$, where $\alpha\in \C^k, \beta\in \C^l$ with
$$
\alpha_i = \prod_\cf \alpha_i^\nn,
$$
$$
\beta_j = \prod_\cf \beta_j^\nn,
$$
for all $i\le k, j\le l$.

We write
$$
\bl = \prod_\cf \bl^\nn.
$$

We define
$$
\underline \End(\bl)= \prod_\cf^{\widetilde F} \End_\ck (V^\nn_{\bl^\nn})
$$
\end{definition}

\begin{theorem}\label{tend} 
Let $\underline\End(\bl)$ be the filtered ultraproduct of bimodules as in Definition \ref{dend}. Then $\underline\End(\bl)$ lies in $\Ind(\cc_t)$ and is a spherical (in the sense of Definition \ref{dspherical})  Harish-Chandra bimodule of finite K-type with
$$
\dim \Hom_{\Ind\cc_t}(V_\bn, \underline \End(\bl)) =  \sum_{\mathbf c\in \Z_{\ge 0}^k} \sum_{\mathbf d\in\Z^l_{\ge 0}} \sum_{\epsilon\subset \gamma} c_\nu(\mathbf c,\mathbf d,\gamma/\epsilon)c_{\bar\nu}(\mathbf c,\mathbf d,\gamma/\epsilon). 
$$
\end{theorem}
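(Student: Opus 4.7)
The plan is to verify the hypotheses of Lemma \ref{lwelldefup} so that $\underline\End(\bl)$ is a well-defined object of $\Ind\cc_t$, then invoke Lemma \ref{lequivup} to rewrite the Hom-spaces in terms of the classical multiplicities already computed in Corollary \ref{cgupta}, and finally check that the Harish-Chandra structural properties survive passage to the filtered ultraproduct.

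First I would verify the two hypotheses of Lemma \ref{lwelldefup} for $X_n=\End_\ck(V_{\bl^\nn})$ with the filtration $\widetilde F$ of Definition \ref{dend}. The containment $\widetilde F^k\End_\ck(V_{\bl^\nn})\in F^k\cc_n$ follows directly from Example \ref{expbw}: this piece is the image of $\widetilde F^k U(\g_n)\in F^k\cc_n$ under the natural surjection $U(\g_n)\twoheadrightarrow\End_\ck(V_{\bl^\nn})$. For the uniform bound on multiplicities, note that $[\widetilde F^k\End_\ck(V_{\bl^\nn}):V_\bn]\le[\End_\ck(V_{\bl^\nn}):V_\bn]$, and Corollary \ref{cgupta} applied with $\bm^\nn=\bl^\nn$ (so $\mathbf a=\mathbf b=0$ and $\delta=\gamma$) shows the right-hand side is eventually constant in $n$, giving the required bound $c(\bn,k)=c(\bn)$ independent of $k$ and of $n$ (in some $U\in\cf$).

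Since the total multiplicities $[\End_\ck(V_{\bl^\nn}):V_\bn]$ are uniformly bounded and $\widetilde F^k\subset F^k$, Lemma \ref{lequivup} then identifies
$$
\underline\End(\bl)\simeq\prod\nolimits_\cf^{F}\End_\ck(V_{\bl^\nn})
$$
as ind-objects. For any simple $V_\bn\in F^m\cc_t$, formula $(\ref{lequivup}.1)$ in the proof of Lemma \ref{lequivup} gives
$$
\Hom_{\Ind\cc_t}(V_\bn,\underline\End(\bl))=\prod\nolimits_\cf\Hom_{\cc_n}(V_\bn^\nn,\End_\ck(V_{\bl^\nn})),
$$
and Corollary \ref{cgupta} shows these classical dimensions stabilize to exactly $\sum_{\mathbf c,\mathbf d,\epsilon}c_\nu(\mathbf c,\mathbf d,\gamma/\epsilon)c_{\bar\nu}(\mathbf c,\mathbf d,\gamma/\epsilon)$. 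This yields simultaneously the multiplicity formula and the finite $K$-type property.

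To finish, I would transport the Harish-Chandra structure. Each $\End_\ck(V_{\bl^\nn})$ is a finite-dimensional $(\g_n,\g_n)$-bimodule on which both copies of $Z(U(\g_n))$ act by $\chi_{\bl^\nn}$ and whose diagonal action integrates to $G_n$. Taking the ultraproduct, the left and right $\g_t$-actions on $\underline\End(\bl)$ are ultraproducts of the corresponding $\g_n$-actions; both copies of $Z(U(\g_t))$ act via the single central character $\chi\coloneqq\prod_\cf\chi_{\bl^\nn}$, hence locally finitely; and $\g_t^{\rm diag}$ acts naturally because each $\g_n^{\rm diag}$-action integrates to $G_n$. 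Finite generation is immediate: $\End_\ck(V_{\bl^\nn})$ is cyclic as a left $U(\g_n)$-module, so $\underline\End(\bl)$ is a quotient of $U(\g_t)$. For sphericality, the $G_n$-invariant identity endomorphisms $1_{V_{\bl^\nn}}$ assemble into a nonzero morphism $\one\to\underline\End(\bl)$. The main point of care is the interplay between the two filtrations---$\widetilde F$ is needed so that Lemma \ref{lwelldefup} applies, while $F$ is needed for the Hom computation via Lemma \ref{lequivup}---and Example \ref{expbw} is exactly the ingredient that reconciles them. The substantive combinatorial content has already been packaged into Corollary \ref{cgupta}; the present theorem is essentially the bookkeeping required to turn a stable family of classical HC-bimodules into an object of $\Ind\cc_t$ with the desired properties.
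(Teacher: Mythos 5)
Your proposal is correct and follows essentially the same route as the paper: Example \ref{expbw} for the filtration compatibility $\widetilde F^k\subset F^k\cc_n$, Corollary \ref{cgupta} (with $\bm^\nn=\bl^\nn$) for the uniform multiplicity bound, and then Lemmas \ref{lwelldefup} and \ref{lequivup} to construct the ind-object and compute its $\Hom$ spaces. Your closing paragraph verifying that the Harish-Chandra and spherical structures pass through the filtered ultraproduct makes explicit what the paper's proof leaves implicit, which is a harmless refinement rather than a different method.
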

\begin{proof}
By the remark about the PBW-filtration in Example \ref{expbw}, $\widetilde F^k \End_\ck (V^\nn_{\bl^\nn}) \in F^k\cc_n$, where $F$ is the filtration from Example \ref{exfiltr}. Moreover, by Corollary \ref{cskew}, for any $\nu\in\cp\times\cp$ and almost all $n$ the multiplicity 
$$
[\End_\ck(V^\nn_{\bl^\nn}): V^\nn_\nu] = \sum_{\mathbf c\in \Z_{\ge 0}^k} \sum_{\mathbf d\in\Z^l_{\ge 0}} \sum_{\epsilon\subset \gamma} c_\nu(\mathbf c,\mathbf d,\gamma/\epsilon)c_{\bar\nu}(\mathbf c,\mathbf d,\gamma/\epsilon) 
$$
doesn't depend on $n$. Thus, we can use Lemma \ref{lwelldefup} to show that $\underline \End(\bl)$ is a well-defined Harish-Chandra bimodule in $\Rep(GL_t)$. It follows from Lemma \ref{lequivup} that 
$$
\underline \End(\bl) = \prod_\cf^F \End_\ck(V^\nn_{\bl^\nn}).
$$
Hence, for any $\nu\in F^k(\cp\times\cp)$
$$
[\underline \End(\bl): V_\nu] = [F^k\underline \End(\bl): V_\nu] = [F^k\End_\ck(V_{\bl^\nn}):V^\nn_\nu]=[\End_\ck(V^\nn_{\bl^\nn}):V^\nn_\nu]
$$
for almost all $n$.
\end{proof}

Now, Lemma \ref{lequivup} shows that it doesn't really matter which filtration we choose. Using the filtration $F$ from Example \ref{exfiltr} we give the following definition:

\begin{definition}
\label{dhom}
Let $\bl^\nn, \bm^\nn,\mathbf a, \mathbf b$ be as in Definition \ref{dboldlambda}.

That is, to remind you, we put $\bl^\nn=(\l^\nn,\emptyset), \bm^\nn = (\mu^\nn,\emptyset)$.

We fix $k,l\in \mathbb Z_{\ge 0}, \gamma\in\mathcal P$ and assume  that $$\l^\nn =[\alpha^\nn,\beta^\nn,\gamma],$$
with $\ell(\alpha^\nn)=k, \ell(\beta^\nn)= l$ and $\ell(\l^\nn)\ll n$. We further assume that $\alpha^\nn$ and $\beta^\nn$ are \textbf{nice} sequences of partitions in the sense of Definition \ref{deftheta}.

Finally, fix some $\mathbf a=(a_1,\ldots,a_k)\in\Z^k$ and $\mathbf b=(b_1,\ldots,b_l)\in\Z^l$ and let
$$
\mu^\nn = [\alpha^\nn+\mathbf a,\beta^\nn +\mathbf b,\delta].
$$

Then we define 
$$
\underline \Hom(\bm, \bl) = \prod_\cf^F \Hom_\ck (V^\nn_{\bm^\nn}, V^\nn_{\bl^\nn}),
$$
where $\bl,\bm\in \C^k\times \C^l\times \cp $ are the triples
$$
\bl=(\alpha, \beta, \gamma),
$$
$$
\bm=(\alpha+\mathbf a, \beta+\mathbf b, \delta),
$$
with
$$
\alpha = \prod_\cf \alpha^\nn,
$$
$$
\beta = \prod_\cf \beta^\nn.
$$

\end{definition}

\begin{theorem}
\label{thom}
The bimodule $\underline \Hom(\bm,\bl)$ is well defined and has finite K-type with
$$
\dim\Hom_{\Ind\cc_t}(V_\nu, \underline \Hom(\bm,\bl)) = \sum_{\mathbf c,\mathbf d,\epsilon} c_\nu({\mathbf c,\mathbf d,\gamma/\epsilon}) c_{\bar \nu}({\mathbf c+\mathbf a,\mathbf d+\mathbf b,\delta/\epsilon}),
$$
where the sum is taken over 
$$
\mathbf c\in(-\mathbf a+\Z^k_{\ge 0})\cap \Z^k_{\ge 0},
$$
$$
\mathbf d\in(-\mathbf b+\Z^l_{\ge 0})\cap \Z^l_{\ge 0}.
$$
and $\epsilon \subset \gamma,\delta$ with
$$
|\mathbf c|+|\mathbf d|+|\gamma|-|\epsilon|=|\nu|,
$$
$$
|\mathbf c|+|\mathbf d|+|\mathbf a|+|\mathbf b|+|\delta|-|\epsilon| =|\bar\nu|.
$$ 
Moreover, we have $\underline \End(\bl) = \underline \Hom(\bl, \bl)$.
\end{theorem}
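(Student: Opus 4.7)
The plan is to deduce the entire statement from the lemmas of Section~\ref{s1} combined with the multiplicity stabilization of Corollary~\ref{cgupta}. By construction of the filtration $F$ of Example~\ref{exfiltr} we have $F^k\Hom_\ck(V_{\bm^\nn},V_{\bl^\nn})\in F^k\cc_n$ for all $n,k$, and Corollary~\ref{cgupta} tells us that for each fixed $\bn\in\cp\times\cp$ the multiplicity $[\Hom_\ck(V_{\bm^\nn},V_{\bl^\nn}):V_\bn]$ is eventually constant in $n$, hence uniformly bounded for almost all $n$. The two hypotheses of Lemma~\ref{lwelldefup} are therefore satisfied (with the bound $c(\bn,k)$ furnished by Corollary~\ref{cgupta}), so $\underline\Hom(\bm,\bl)=\prod_\cf^F\Hom_\ck(V_{\bm^\nn},V_{\bl^\nn})$ is a well-defined object of $\Ind\cc_t$.

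For the multiplicity formula, fix $\bn\in F^kS$ for some $k$; then the multiplicity of $V_\bn$ in $\underline\Hom(\bm,\bl)$ equals its multiplicity in $F^k\underline\Hom(\bm,\bl)=\prod_\cf F^k\Hom_\ck(V_{\bm^\nn},V_{\bl^\nn})$, which, because ultraproducts of eventually constant integer sequences recover the stable value, is exactly the number computed in the proof of Corollary~\ref{cgupta}. That value is the sum in the statement, which is finite; this gives both the finite $K$-type property and the explicit formula. For the bimodule structure, the two $\g_t$-actions arise as filtered ultraproducts of the classical left and right actions, they coincide on the diagonal $\g_t^{\rm diag}$ (which acts naturally), and each copy of $Z(U(\g_t))=\prod_\cf^F Z(U(\g_n))$ acts by the ultraproduct scalar $\chi_\bl$ or $\chi_\bm$, giving local finiteness. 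Finite generation over $U(\g_t)\otimes U(\g_t)^{\rm op}$ follows by taking the ultraproduct of cyclic generators in each classical simple bimodule $\Hom_\ck(V_{\bm^\nn},V_{\bl^\nn})$.

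For the equality $\underline\End(\bl)=\underline\Hom(\bl,\bl)$, observe that when $\bm=\bl$ the two sides are ultraproducts of the same classical bimodules under different filtrations: the PBW-induced filtration $\widetilde F$ of Definition~\ref{dend} on the one hand, and the isotypic filtration $F$ of Definition~\ref{dhom} on the other. Example~\ref{expbw} supplies the containment $\widetilde F^k\End_\ck(V_{\bl^\nn})\subset F^k\End_\ck(V_{\bl^\nn})$, and the uniform bound on isotypic multiplicities comes once more from Corollary~\ref{cgupta}, so Lemma~\ref{lequivup} applies and yields the desired natural isomorphism of ind-objects. Since all of the combinatorial content has already been handled in Corollary~\ref{cgupta} and the formal content packaged in the lemmas of Section~\ref{s1}, the only place I would expect any residual subtlety is the verification of finite generation, and this reduces immediately to simplicity of the classical $\Hom$-bimodules.
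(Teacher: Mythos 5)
Your proposal is correct and follows essentially the same route as the paper: the paper proves Theorem~\ref{tend} by combining Example~\ref{expbw}, Corollary~\ref{cgupta}, and Lemmas~\ref{lwelldefup}--\ref{lequivup} in exactly the way you describe, and then declares the proof of Theorem~\ref{thom} ``identical.'' Your additional remarks on the bimodule structure and finite generation are not spelled out in the paper, but they are consistent with its implicit reasoning (in particular, finite generation really does reduce to simplicity of the classical $\Hom$-bimodules, recorded in the paper as a remark that the filtered ultraproduct of simples is simple).
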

\begin{proof}
Follows from the Corollary \ref{cskew} and is identical to the proof of Theorem \ref{tend}.
\end{proof}

\begin{lemma}\label{rextdefinitionofhom}
Let $\bl\in \C^k\times\C^l\times\cp$ be some triple
$$
\bl=(\alpha,\beta, \gamma).
$$

Suppose $\rm{trdeg}_{\mathbb Q}{\mathbb {Q}}(\alpha_1,\ldots,\alpha_k,\beta_1,\ldots,\beta_l,t) = k+l+1$ (that is, $\{\alpha_i\}_{i\le k}\cup \{\beta_i\}_{i\le l} \cup \{t\}$  can be extended to a transcendence basis of $\mathbb C$ over $\bar{\mathbb Q}$). Then we can always find a collection of partitions $\l^\nn$ with $\ell(\l)\ll n$ and
$$
\l^\nn = [\alpha^\nn,\beta^\nn,\gamma]
$$
with
$$
\alpha_i=\prod_\cf \alpha_i^\nn,
$$
$$
\beta_i = \prod_\cf \beta^\nn_i.
$$

Moreover, we can choose $\alpha^\nn, \beta^\nn$ to be \textbf{nice}.
\end{lemma}

\begin{proof}
Suppose we found \textbf{nice} sequences of partitions $\widetilde\alpha^\nn, \widetilde \beta^\nn$, such that $\text{trdeg}_{\mathbb Q} \mathbb Q(\widetilde\alpha_1,\ldots, \widetilde\alpha_k, \widetilde\beta_1,\ldots,\widetilde\beta_l, t) = k + l + 1 $ for $\widetilde\alpha_i = \prod_\cf \alpha_i^{\nn}, \widetilde\beta_i = \prod_\cf \beta_i^\nn. $ Then there exists an automorphism of $\mathbb C$ over $\bar{\mathbb Q}$ that fixes $t$ and sends $\widetilde\alpha_i$ to $\alpha_i$ and $\widetilde \beta_i$ to $\beta_i$.

Now, to construct $\widetilde\alpha_i, \widetilde \beta_j$ let us define a set
$$
K = \{\phi\in\mathbb C|~\phi=\prod_\cf \phi^\nn \text{ with } \phi^\nn\in \mathbb N, \lim_n \phi^\nn = \infty, \text{ and } \phi^\nn < n/2l\}.
$$
Then the cardinality of $K$ is continuum. Thus, the transcendence degree of $\mathbb Q(\phi)_{\phi\in K}$ cannot be finite. So, we can find $\varepsilon_1, \ldots, \varepsilon_k, \delta_1, \ldots, \delta_l\in K $, such that $\rm{trdeg}_{\mathbb Q} \mathbb Q(\varepsilon_1,\ldots, \varepsilon_k, \delta_1,\ldots, \delta_l, t) = k + l +1$. 
We put $\widetilde \alpha_i - \widetilde \alpha_{i+1} = \varepsilon_i$ and $\widetilde\beta_i - \widetilde\beta_{i+1} = \delta_i$ (where we define $\widetilde\alpha_{k+1}=\widetilde\beta_{l+1}=0$).

The conditions on elements of $K$ imply that $\widetilde \alpha^\nn, \widetilde \beta^\nn$ are \textbf{nice} sequences of partitions (see Definition \ref{deftheta}). Moreover, as $\lim_n \widetilde\alpha_k^\nn = \lim_n \widetilde\beta_l^\nn = \infty$, $\l^\nn = [\widetilde\alpha^\nn, \widetilde\beta^\nn, \gamma]$ is a well-defined partition for any $\gamma$ and for almost all $n$. We have $n -\ell(\l^\nn) = n - (k + \widetilde\beta_1^\nn) = n -k - \delta_1 - \ldots - \delta_l > n - k - l(n/2l) = n/2 - k$. So, $\lim_{n} (n-\ell(\l^\nn)) = \infty$.
\end{proof}

\begin{corollary}
We can define the bimodules 
$$
\underline \Hom(\bm,\bl)
$$
for any $\bl$ as in Lemma \ref{rextdefinitionofhom} and
$$
\bm=(\alpha+\mathbf a,\beta+\mathbf b,\delta)
$$
for some $\mathbf a\in\Z^k, \mathbf b\in\Z^l$ and $\delta\in\cp$.

\end{corollary} 

\begin{hyp}\label{h}
Let $\bl^\nn = (\l^\nn,\bar{\l^\nn}), \bm^\nn = (\mu^\nn,\bar{\mu^\nn})\in \cp\times\cp$ be two sequences of bipartitions, such that both  $\{\l^\nn, \mu^\nn\}$ and $\{\bar{\l^\nn},\bar{\mu^\nn}\}$ satisfy the conditions of Proposition \ref{cgupta}. Then for any $\bn\in\cp\times\cp$ the multiplicity
$$
[V^\nn_{\bl^\nn}\otimes (V^\nn_{\bm^\nn})^*: V^\nn_\bn]
$$
is constant for almost all $n$, thus we can define the bimodule $\underline \Hom(\bm,\bl)$, for $\bl=(\l,\bar\l)$
\begin{align*}
    \l =(\alpha, \beta, \gamma),
    \bar\l =(\bar\alpha, \bar\beta, \bar\gamma)
\end{align*}
with $\alpha\in \C^k, \bar\alpha\in \C^{\bar k}, \beta\in\C^l, \bar\beta\in \C^{\bar l}, \gamma, \bar\gamma\in \cp$. For $\bm = (\mu, \bar\mu)$ with
\begin{align*}
    \mu =(\alpha+\mathbf a, \beta+\mathbf b, \delta),
    \bar\mu =(\bar\alpha+\bar {\mathbf a}, \bar\beta+ \bar {\mathbf b}, \bar\delta),
\end{align*}
where $\mathbf a,\bar{\mathbf a}, \mathbf b ,\bar {\mathbf b}$ are integer sequences and we place the same restriction on $\l,\bar \l$ as in Lemma \ref{rextdefinitionofhom}.
\end{hyp}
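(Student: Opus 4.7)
My plan is to reduce the conjecture to Corollary~\ref{cgupta} using a stable mixed-tensor decomposition formula for $GL_n$. First, identifying $V_{\bm^\nn}^{*}\cong V_{(\bar\mu^\nn,\mu^\nn)}$, the multiplicity $[V_{\bl^\nn}\otimes V_{\bm^\nn}^{*}:V_{\bn}^\nn]_{GL_n}$ becomes
$$
[V_{(\l^\nn,\bar\l^\nn)}\otimes V_{(\bar\mu^\nn,\mu^\nn)}:V_{(\nu,\bar\nu)}]_{GL_n}.
$$
For $n$ larger than the sum of the lengths of the six partitions involved (a bound that is uniform in $n$ under the running assumptions, since $\ell(\l^\nn)=k+\ell(\gamma)$ and its three analogues are all fixed), Koike's formula for mixed tensor products expresses this multiplicity as
$$
\sum_{\kappa_1,\ldots,\kappa_6}
c^{\l^\nn}_{\kappa_1,\kappa_2}\,c^{\mu^\nn}_{\kappa_6,\kappa_2}\,
c^{\bar\l^\nn}_{\kappa_3,\kappa_4}\,c^{\bar\mu^\nn}_{\kappa_5,\kappa_4}\,
c^{\nu}_{\kappa_1,\kappa_5}\,c^{\bar\nu}_{\kappa_3,\kappa_6}.
$$
The key structural feature is that the growing partitions $\l^\nn,\mu^\nn$ are coupled only through $\kappa_2$, and $\bar\l^\nn,\bar\mu^\nn$ only through $\kappa_4$; the remaining indices $\kappa_1,\kappa_3,\kappa_5,\kappa_6$ are constrained to a finite range by $|\kappa_1|+|\kappa_5|=|\nu|$ and $|\kappa_3|+|\kappa_6|=|\bar\nu|$.

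Next, for each fixed $4$-tuple $(\kappa_1,\kappa_3,\kappa_5,\kappa_6)$ in that finite range, I would compute the inner partial sums
$$
\sum_{\kappa_2}c^{\l^\nn}_{\kappa_1,\kappa_2}c^{\mu^\nn}_{\kappa_6,\kappa_2}\quad\text{and}\quad \sum_{\kappa_4}c^{\bar\l^\nn}_{\kappa_3,\kappa_4}c^{\bar\mu^\nn}_{\kappa_5,\kappa_4}.
$$
These are exactly the sums $\sum_\eta c^{\l^\nn}_{\nu,\eta}c^{\mu^\nn}_{\bar\nu,\eta}$ analysed in the proof of Corollary~\ref{cgupta}, with $(\kappa_1,\kappa_6)$ (respectively $(\kappa_3,\kappa_5)$) in place of $(\nu,\bar\nu)$. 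By the hypothesis that both $\{\l^\nn,\mu^\nn\}$ and $\{\bar\l^\nn,\bar\mu^\nn\}$ satisfy the Corollary~\ref{cgupta} setup, each inner sum stabilizes for $n\gg 0$ to an $n$-independent value determined only by $(k,l,\mathbf a,\mathbf b,\gamma,\delta;\kappa_1,\kappa_6)$ and $(\bar k,\bar l,\bar{\mathbf a},\bar{\mathbf b},\bar\gamma,\bar\delta;\kappa_3,\kappa_5)$ respectively.

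Summing the product of these stable quantities against the fixed coefficients $c^\nu_{\kappa_1,\kappa_5}c^{\bar\nu}_{\kappa_3,\kappa_6}$ over the finite range of $(\kappa_1,\kappa_3,\kappa_5,\kappa_6)$ produces the desired $n$-independent total multiplicity. With stability in hand, the definition of $\underline\Hom(\bm,\bl)$ via filtered ultraproduct is well-posed by Lemma~\ref{lwelldefup}, is filtration-independent by Lemma~\ref{lequivup}, and its Harish-Chandra and finite-$K$-type properties follow verbatim from the proof of Theorem~\ref{thom}.

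The main technical obstacle is the invocation of Koike's formula in the precise mixed-tensor form written above, together with verifying that its ``large $n$'' hypothesis matches the standing assumption $\ell(\l^\nn),\ell(\bar\l^\nn),\ell(\mu^\nn),\ell(\bar\mu^\nn)\ll n$; the uniform boundedness of those lengths noted above makes this immediate. A smaller but necessary check is that the proof of Corollary~\ref{cgupta} extends to arbitrary fixed pairs $(\kappa_1,\kappa_6)$ in the role of $(\nu,\bar\nu)$, which it does, since that argument only uses the sizes of those two partitions and is otherwise insensitive to their specific values.
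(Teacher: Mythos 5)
The statement you are proving is stated in the paper as a \emph{Conjecture} (the environment \texttt{hyp}), and the paper supplies no proof of it; so there is no in-paper argument to compare against. Your proposal is therefore an attempt to actually settle the conjecture, and it does look essentially correct. The key idea — that Koike's stable mixed-tensor formula factors the multiplicity into two pieces each having the shape of the inner sum $\sum_\eta c^{\l^\nn}_{\nu,\eta}c^{\mu^\nn}_{\bar\nu,\eta}$ analysed in Corollary~\ref{cgupta}, coupled only through a finite outer sum governed by $c^\nu_{\kappa_1,\kappa_5}c^{\bar\nu}_{\kappa_3,\kappa_6}$ — is sound, and the subsequent reduction to Lemmas~\ref{lwelldefup}, \ref{lequivup} and the proof of Theorem~\ref{thom} is the right way to finish. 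This gives more than the paper does, since the paper only states the stabilization conjecturally.

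One factual slip needs fixing, though it does not break the argument. You assert that $\ell(\l^\nn)=k+\ell(\gamma)$ is fixed; this is false. From the decomposition $\l^\nn=[\alpha^\nn,\beta^\nn,\gamma]$ one has $\ell(\l^\nn)=(\l^\nn)'_1=k+\beta^\nn_1$, and condition~(\ref{eqtheta}) forces $\beta^\nn_1\to\infty$, so $\ell(\l^\nn)\to\infty$ (this is why Definition~\ref{dboldlambda} only imposes $\ell(\l^\nn)\ll n$, not boundedness). Consequently the ``sum of the lengths of the six partitions'' is not a fixed constant; it grows. The conclusion survives because that sum is still $o(n)$ under the running assumptions, and the stable range for Koike's formula only requires $n$ to exceed some expression in those lengths, which is eventually dominated by $n$. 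You should replace the ``uniform bound'' justification by exactly this $o(n)$ observation. With that repair, and with an explicit citation for the six-Littlewood--Richardson-coefficient form of Koike's product rule (including its stable-range hypothesis), the argument appears complete.
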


\begin{remark}
Defined as a filtered ultraproduct of simple bimodules, the bimodules $\underline{\Hom}(\bm,\bl)$ are clearly simple as well.
\end{remark}

\section{Exponential central characters}

In this section we will compute the central character with which $U(\mathfrak{g}_t)\otimes U(\mathfrak{g}_t)^{op}$ acts on the bimodules $\underline \Hom(\bm,\bl)$. 

\subsection{Definitions and notations}

\begin{definition}
An exponential central character $\chi(z)$ of $U(\mathfrak{g}_t)$ is the generating function 
$$\chi(z)= \frac{1}{e^{z}-1} \sum_{k\ge 0} \frac{\chi(C_k)}{k!}z^k,
$$ where $\chi:Z(U(\mathfrak{g}))\to \mathbbm{k}$ is some central character and $C_k$ are the generators of $Z(U(\mathfrak{g}_t))$ defined in Section \ref{suniv}. 
\end{definition}

\begin{remark}
We can similarly define the generating function for any central character $\chi^\nn$ of $U(\mathfrak{g}_n)$. To do this we need to say what $C_k$ for $k>n$ are. Using the isomorphism
$$
Z(U(\mathfrak{g}_n))\simeq \ck[\h_n^*]^{S_n},
$$
we define $C_k$ to be the element of $Z(U(\mathfrak{g}_n))$ acting on each simple module $L(\nu)$ with the constant
$$
\sum_{i=1}^n \left((\nu_i+\rho_i)^k-\rho_i^k\right).
$$
That is, we can just extend our definition of $C_k$ to all values of $k$.
\end{remark}

\begin{ex}\label{exhcnonzero}
Theorem 3.17 in \cite{U} says that the category $\hc_{\chi,\psi}(\mathfrak{g}_t)$ is non-zero if and only if there exist some complex numbers $b_1,\ldots, b_r$ and $c_1,\cdots, c_s$, such that
$$
\chi(z)-\psi(z) = \sum_{i=1}^r e^{b_iz} - \sum_{j=1}^s e^{c_j z}.
$$
\end{ex}

\subsection{Computation of central characters}

\begin{claim}\label{cl1}
Let $\chi_\bn$ be the central character of $V_\bn\in \cc_t$ for $\bn = (\nu, \bar\nu)\in\cp\times\cp$. We have
$$
\chi_\bn(z) = \frac{1}{e^z-1} \left(\sum_{j=1}^{\ell(\nu)} (e^{\nu_jz} -1) e^{\frac{t+1}{2}z-jz}+\sum_{j=1}^{\ell(\bar\nu)} (e^{-\bar\nu_j z} -1)e^{-\frac{t+1}{2}z+jz}\right).
$$
\end{claim}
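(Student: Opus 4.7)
The plan is to pull the central character back to the classical setting via the ultraproduct construction of Section \ref{suniv}, carry out the character computation for $GL_n$ explicitly, and then recognize the exponential generating function.

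First I would recall that $V_\bn = \prod_\cf V_\bn^{(n)}$, so by the definition of the ultraproduct on central characters given in Section \ref{suniv}, we have $\chi_\bn(C_k) = \prod_\cf \chi_{[\bn]_n}^{(n)}(C_k)$, where $[\bn]_n = \sum_i \nu_i e_i - \sum_j \bar\nu_j e_{n-j+1}$ and $\chi_{[\bn]_n}^{(n)}$ is the central character of the $GL_n$-module $V_\bn^{(n)}$. By the formula in Case 1 of Section \ref{suniv},
$$
\chi_{[\bn]_n}^{(n)}(C_k) = \sum_{i=1}^n \left(([\bn]_n)_i + \rho_i\right)^k - \rho_i^k,
$$
with $\rho_i = \tfrac{n+1}{2}-i$ for $GL_n$.

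Next I would plug in the explicit shape of $[\bn]_n$: only the first $\ell(\nu)$ coordinates and the last $\ell(\bar\nu)$ coordinates are nonzero, and all contributions from the middle coordinates cancel between the $((\cdot)+\rho)^k$ and $\rho^k$ terms. This gives
$$
\chi_{[\bn]_n}^{(n)}(C_k) = \sum_{j=1}^{\ell(\nu)} \!\left(\!\left(\nu_j+\tfrac{n+1}{2}-j\right)^{\!k}\!\!-\left(\tfrac{n+1}{2}-j\right)^{\!k}\right) + \sum_{j=1}^{\ell(\bar\nu)}\!\left(\!\left(-\bar\nu_j+j-\tfrac{n+1}{2}\right)^{\!k}\!\!-\left(j-\tfrac{n+1}{2}\right)^{\!k}\right),
$$
using that the reindexing $i = n-j+1$ turns $\rho_i = \tfrac{n+1}{2}-i$ into $j-\tfrac{n+1}{2}$. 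Taking the ultraproduct (and using $\prod_\cf n = t$ under our fixed identification $\prod_\cf \ck \simeq \C$) yields
$$
\chi_\bn(C_k) = \sum_{j=1}^{\ell(\nu)} \!\left(\!\left(\nu_j+\tfrac{t+1}{2}-j\right)^{\!k}\!\!-\left(\tfrac{t+1}{2}-j\right)^{\!k}\right) + \sum_{j=1}^{\ell(\bar\nu)}\!\left(\!\left(-\bar\nu_j+j-\tfrac{t+1}{2}\right)^{\!k}\!\!-\left(j-\tfrac{t+1}{2}\right)^{\!k}\right).
$$

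Finally I would assemble the generating function: using $\sum_k \tfrac{a^k}{k!}z^k = e^{az}$, the sums over $k$ convert each power to an exponential, giving
$$
\sum_k \frac{\chi_\bn(C_k)}{k!}z^k = \sum_{j=1}^{\ell(\nu)} \bigl(e^{\nu_j z}-1\bigr) e^{\left(\frac{t+1}{2}-j\right)z} + \sum_{j=1}^{\ell(\bar\nu)} \bigl(e^{-\bar\nu_j z}-1\bigr) e^{\left(j-\frac{t+1}{2}\right)z},
$$
and dividing by $e^z-1$ produces the claimed formula. There is no serious obstacle here; the only care needed is the bookkeeping for $\rho$ and the reindexing of the last $\ell(\bar\nu)$ coordinates, together with the observation that swapping $\prod_\cf n$ for $t$ is legitimate because each $\chi^{(n)}_{[\bn]_n}(C_k)$ is a polynomial in $n$ with coefficients independent of $n$, so its ultraproduct is the same polynomial evaluated at $t$.
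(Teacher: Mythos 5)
Your proposal is correct and follows essentially the same route as the paper: express $\chi_\bn$ as the ultraproduct of the classical central characters $\chi_{[\bn]_n}^{(n)}$, evaluate these on the generators $C_k$ using the explicit $\rho$ for $GL_n$ (with $\rho_{n-j+1} = j - \tfrac{n+1}{2}$), substitute $t$ for $n$ via $\prod_\cf n = t$, and sum the resulting exponential series. The only difference is presentational — you spell out the generating-function assembly and the polynomial-in-$n$ justification that the paper leaves implicit.
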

\begin{proof}
Let us denote by $\chi_\bn^{(n)}$ the central character with which $U(\mathfrak{g}_n)$ acts on the irreducible module $V_{\bn}^\nn$. Clearly, we have $\chi_\bn = \prod_\cf \chi_\bn^\nn$. Moreover, for big enough $n$
$$\chi_\bn^\nn(C_k) = \sum_{j=1}^{\ell(\nu)}\left( (\nu_j+\rho_j)^k - \rho_j^k\right) + \sum_{j=1}^{\ell(\bar\nu)} \left((-\bar\nu_j +\rho_{n-j+1})^k - \rho_{n-j+1}^k\right).$$

For $GL_n$ we have
$$
\rho_i = \frac{n+1}{2}-i.
$$
In particular,
$$
\rho_{n-i+1} = \frac{n+1}{2} - (n+1) + i = -\frac{n+1}{2} + i.
$$

Now, since $\prod_\cf n = t$, we have
\begin{align*}
 \chi_\bn(C_k)=\prod_\cf \chi^\nn_\bn(C_k) = \sum_{j=1}^{\ell(\nu)}\left( (\nu_j+\tfrac{t+1}{2} - j)^k - (\tfrac{t+1}{2}-j)^k\right) +\\+ \sum_{j=1}^{\ell(\bar\nu)}\left( (-\bar\nu_j -\tfrac{t+1}{2}+j)^k - (-\tfrac{t+1}{2}+j)^k\right).   
\end{align*}

The result follows.
\end{proof}

\begin{remark}
Let $\chi_\nu$ be the central character of the ``polynomial" module $V_\nu$ for $\nu\in \cp$. That is, if $\bn=(\nu, \emptyset)$, we will simply write $\nu$ instead of $\bn$.

The computation in Claim \ref{cl1} showed that if $\bn=(\nu,\bar\nu)$, then

$$
\chi_\bn(z) = \chi_\nu(z) - e^{-z}\chi_{\bar\nu}(-z).
$$
Similarly,
$$
\chi^\nn_\bn(z) = \chi^\nn_\nu(z) - e^{-z} \chi^\nn_{\bar\nu}(-z).
$$
\end{remark}

Let $\l = [\alpha,\beta, \gamma]$ be some partition. We want to express $\chi_\l$ in terms of $\alpha_i,\beta_i,\gamma_i$.

Let us introduce another notation to simplify our computation.

\begin{definition}\label{dqnum}
We put
$$
q= e^z.
$$
We will also make use of the $q$-numbers:
$$
[k]_q= \frac{q^k-1}{q-1}.
$$
They satisfy the following property: if $k>l$ 

\begin{equation}
[k-l]_q = [k]_q-[l]_q q^{k-l}.
\end{equation}
\end{definition}

We will write $\chi(\log(q))$ for the generating function of $\chi$ expressed in terms of $q$.

\begin{ex}\label{exchilam}
For $\l\in\cp$ we have 
$$
\chi_\l(\log(q))= \frac{q^{\frac{t+1}{2}}}{q-1} \sum_{j=1}^{\ell(\l)} (q^{\l_j}-1)q^{-j} = q^{\frac{t+1}{2}}\sum_{j=1}^{\ell(\l)} [\l_j]_q q^{-j}.
$$
\end{ex}

\begin{lemma}\label{ltransposeddiagram}
Let $\l\in\cp$. Then
$$
\sum_{j=1}^{\ell(\l)} [\l_j]_q q^{-j} = \sum_{k=1}^{\ell(\l')}[\l'_k]_q q^{-\l_k'+k-1}.
$$
\end{lemma}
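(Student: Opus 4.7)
The plan is to interpret both sides of the identity as the same double sum over the boxes of the Young diagram of $\l$. Concretely, I will show that each side equals
$$
S(\l) \;\coloneqq\; \sum_{(r,c)\in \l} q^{\,c-r-1},
$$
where $(r,c)$ ranges over row/column indices of boxes in $\l$, i.e.\ $1\le r\le \ell(\l)$ and $1\le c\le \l_r$. Once this is established, the identity reduces to the tautology that summing $S(\l)$ row-by-row gives the same answer as summing it column-by-column.

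For the left-hand side, I would expand $[\l_r]_q = \sum_{c=1}^{\l_r} q^{c-1}$ and then multiply by $q^{-r}$. This gives
$$
\sum_{r=1}^{\ell(\l)} [\l_r]_q\, q^{-r} \;=\; \sum_{r=1}^{\ell(\l)}\sum_{c=1}^{\l_r} q^{\,c-r-1} \;=\; S(\l),
$$
which is simply the row-by-row enumeration of the boxes of $\l$.

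For the right-hand side, I would use that the boxes of $\l$ lying in the $k$-th column are precisely the pairs $(r,k)$ with $1\le r\le \l'_k$. Starting from $[\l'_k]_q = \sum_{s=0}^{\l'_k -1} q^{s}$ and setting $r \coloneqq \l'_k - s$, I get
$$
[\l'_k]_q\, q^{-\l'_k + k - 1} \;=\; \sum_{s=0}^{\l'_k-1} q^{\,s-\l'_k+k-1} \;=\; \sum_{r=1}^{\l'_k} q^{\,k-r-1}.
$$
Summing over $k$ from $1$ to $\ell(\l')$ then yields $\sum_{(r,k)\in\l} q^{\,k-r-1}=S(\l)$, and the identity follows.

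There is no real obstacle here: this is a standard bookkeeping argument, essentially the same one that shows the $q$-content statistic of a partition is invariant under conjugation. The only place a small care is needed is the reindexing $r = \l'_k - s$ in the column computation, which is what converts the natural expansion of $[\l'_k]_q$ into a sum that matches the row-by-row expression term-for-term.
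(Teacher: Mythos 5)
Your proof is correct, and it takes a genuinely different route from the paper's. The paper groups rows by their common length: it introduces $J_k = \{\,j : \l_j = k\,\}$, rewrites the left-hand side as $\sum_k [k]_q[\l'_k - \l'_{k+1}]_q q^{-\l'_k}$, and finishes with an Abel-summation manipulation based on the $q$-number identity $[k-l]_q = [k]_q - [l]_q q^{k-l}$. You instead expand each $[\l_j]_q$ into monomials and interpret both sides as the generating function
$S(\l) = \sum_{(r,c)\in\l} q^{\,c-r-1}$
over the boxes of the Young diagram, so the identity reduces to enumerating boxes row-by-row versus column-by-column. Your reindexing $r = \l'_k - s$ on the right-hand side is the only nontrivial step, and it checks out: $[\l'_k]_q q^{-\l'_k+k-1} = \sum_{r=1}^{\l'_k} q^{\,k-r-1}$ is exactly the column-$k$ contribution to $S(\l)$. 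Compared to the paper's argument, yours is more elementary (no $q$-number algebra, no telescoping) and makes the combinatorial content of the lemma --- invariance of the $q$-content statistic under conjugation --- explicit, whereas the paper's version is a shorter formal manipulation once one is comfortable with $q$-number identities.
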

\begin{proof}
Let $J_k = \{j|~\l_j =k\}$. As $\{1,\ldots, \lambda'_k\} = \{j|~\l_j\ge k\}$, we have
$$
J_k=\{\l'_{k+1}+1,\ldots, \l'_k\}.
$$
Now,
$$
\sum_{j=1}^{\ell(\l)} [\l_j]_qq^{-j}=\sum_{k=1}^{\l_1} \sum_{j\in J_k} [k]_q  q^{-j} = \sum_{k=1}^{\l_1} [k]_q(q^{-\l_{k+1}'-1}+\ldots+q^{-\l'_k}) = \sum_{k=1}^{\ell(\l')} [k]_q [\l'_{k}-\l'_{k+1}]_q q^{-\l_k'}.
$$
By $(\ref{dqnum}.1)$, the above is equal to 
$$
\sum_{k=1}^{\ell(\l')} [k]_q([\l_k']_q q^{-\l_k'}-[\l_{k+1}']_q q^{-\l'_{k+1}}) = \sum_{k=1}^{\ell(\l')} [\l_k']_q q^{-\l'_k}([k]_q  - [k-1]_q) = \sum_{k=1}^{\ell(\l')} [\l_k']_q q^{-\l'_k+k-1}.
$$
\end{proof}

\begin{claim}\label{cl2}
Let $\l=[\alpha, \beta, \gamma]$ (in the sense of Definition \ref{defcutdiag}) with $\ell(\alpha) = k, \ell(\beta) = l, \ell(\gamma)=m$. Then
$$
\chi_\l(\log(q)) = q^{\frac{t+1}{2}}( \sum_{j=1}^k [\alpha_j+l]_q q^{-j} + \sum_{j=1}^l [\beta_j-m]_q q^{-\beta_j+j-1-k} + \sum_{j=1}^{m} [\gamma_j+l]_q q^{-k-j}).
$$
\end{claim}
\begin{proof}
We have (by Example \ref{exchilam})
$$
\chi_\l(\log(q))q^{-\frac{t+2}{2}} = \sum_{j=1}^{\ell(\l)} [\l_j]_q q^{-j} = \sum_{j=1}^k [\alpha_j+l]_q q^{-j} + \sum_{j=k+1}^{\ell(\l)}[\l_j]_q q^{-j},
$$
by the definition of $\alpha$. Next, by the definition of $\gamma$, the above is equal to
$$
\sum_{j=1}^k [\alpha_j+l]_q q^{-j} +  q^{-k}(\sum_{j=1}^m [\gamma_j+l]_q q^{-j} + \sum_{j=m+1}^{\ell(\l)-k} [\l_{j+k}]_q q^{-j}).
$$
Lastly, let $\mu$ be the partition obtained from $\l$ by removing the first $k+m$ rows. That is,
$$
\mu_j = \l_{j+k+m}.
$$
Then, by Lemma \ref{ltransposeddiagram} and the definition of $\beta$,
$$
\chi_\l(\log(q))q^{-\frac{t+1}{2}} - \sum_{j=1}^k [\alpha_j+l]_q q^{-j} -  \sum_{j=1}^m [\gamma_j+l]_q q^{-j-k} = q^{-k-m}\sum_{j=1}^{\ell(\l)-k-m} [\l_{j+k+m}]_q q^{-j} =
$$
$$ =  q^{-k-m}\sum_{j=1}^{\ell(\l)-k-m} [\mu_j]_q q^{-j} = 
 q^{-k-m} \sum_{j=1}^l [\mu_j']_q q^{-\mu_j'+j-1} = 
$$
$$
= q^{-k-m} \sum_{j=1}^l [\beta_j - m]_q q^{-\beta_j + m + j -1} = 
 \sum_{j=1}^l [\beta_j - m]_q q^{-\beta_j  + j -1-k}.
$$
\end{proof}

\begin{theorem}
Let $\bl,\bm \in \C^k\times \C^l\times \cp$ be some triples
$$
\bl= (\alpha,\beta,\gamma),
$$
$$
\bm = (\alpha+a, \beta+b,\delta),
$$
 with $\mathbf a\in\Z^k, \mathbf b\in\Z^l$, such that $\rm{trdeg}_{\mathbb Q} \Q(\alpha_1,\ldots, \alpha_k,\beta_1,\ldots,\beta_l, t) = k + l + 1$. 

Then
$$
\underline \Hom(\bm,\bl)\in \hc_{\chi,\psi}(\mathfrak{g}_t),
$$
where
$$
\chi(\log(q)) = q^{\frac{t+1}{2}}( \sum_{j=1}^k [\alpha_j+l]_q q^{-j} + \sum_{j=1}^l [\beta_j-m]_q q^{-\beta_j+j-1-k} + \sum_{j=1}^{m} [\gamma_j+l]_q q^{-k-j})
$$
and
$$
\psi(\log(q)) = q^{\frac{t+1}{2}}( \sum_{j=1}^k [\alpha_j+a_j+l]_q q^{-j} + \sum_{j=1}^l [\beta_j-m+b_j]_q q^{-\beta_j-b_j+j-1-k} + \sum_{j=1}^{m} [\delta_j+l]_q q^{-k-j}).
$$
\end{theorem}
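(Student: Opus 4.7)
The plan is to reduce the theorem directly to Claim~\ref{cl2} via the ultraproduct construction. The key observation is that for each $n$ in a set belonging to the ultrafilter, the classical bimodule $\Hom_\ck(V_{\bm^\nn}, V_{\bl^\nn})$ is a $(\g_n, \g_n)$-bimodule on which the left copy of $Z(U(\g_n))$ acts by the central character $\chi_{\bl^\nn}$ and the right copy by $\chi_{\bm^\nn}$. Since $U(\g_t)$ is the filtered ultraproduct of $U(\g_n)$ and, by definition, $Z(U(\g_t)) = \prod_\cf^F Z(U(\g_n))$, the filtered ultraproduct $\underline\Hom(\bm, \bl)$ carries a left $Z(U(\g_t))$-action by the central character $\chi \coloneqq \prod_\cf \chi_{\bl^\nn}$ (evaluated on each generator $C_k$) and a right action by $\psi \coloneqq \prod_\cf \chi_{\bm^\nn}$. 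This step uses nothing more than the compatibility of the ultraproduct with the $Z(U(\g_n))$-action and the fact that being annihilated by $z - \chi(z)$ for each $z$ in the center is a condition that passes through the ultrafilter.

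Next, I apply Claim~\ref{cl2} to $\l^\nn = [\alpha^\nn, \beta^\nn, \gamma]$, which has $\ell(\alpha^\nn) = k$, $\ell(\beta^\nn) = l$, and $\ell(\gamma) = m$ for all sufficiently large $n$, to obtain
\[
\chi_{\l^\nn}(\log q) = q^{\frac{n+1}{2}}\!\left( \sum_{j=1}^k [\alpha_j^\nn + l]_q q^{-j} + \sum_{j=1}^l [\beta_j^\nn]_q q^{-\beta_j^\nn + j - 1 - k} + \sum_{j=1}^m [\gamma_j + l]_q q^{-k-j} \right),
\]
and similarly for $\mu^\nn = [\alpha^\nn + \mathbf{a}, \beta^\nn + \mathbf{b}, \delta]$, where $\alpha_j^\nn$ is replaced by $\alpha_j^\nn + a_j$ and $\beta_j^\nn$ by $\beta_j^\nn + b_j$.

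Finally, I take the ultraproduct of these generating functions coefficient by coefficient in $z = \log q$. The symbols $q^{\alpha_j^\nn}, q^{\beta_j^\nn}, q^{n}$ expand as $e^{\alpha_j^\nn z}$, $e^{\beta_j^\nn z}$, $e^{nz}$, each coefficient of $z^N$ being a polynomial in $\alpha_j^\nn$, $\beta_j^\nn$, $n$. Since $\alpha_j = \prod_\cf \alpha_j^\nn$, $\beta_j = \prod_\cf \beta_j^\nn$, and $t = \prod_\cf n$, and ultraproducts commute with polynomial expressions in $\bar\Q$, each $z^N$-coefficient passes through the ultraproduct to yield the claimed formula for $\chi(\log q)$. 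The same argument, applied to $\mu^\nn$, yields $\psi(\log q)$.

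There is really no substantial obstacle; the computation was done once and for all in Claim~\ref{cl2}, and the remaining work is purely bookkeeping. The one point requiring care is the verification that the generating function $\chi(z)$ of the ultraproduct character agrees coefficient-wise with the ultraproduct of the generating functions $\chi_{\l^\nn}(z)$, which is immediate from $Z(U(\g_t)) = \prod_\cf^F Z(U(\g_n))$, together with the assumption (guaranteed by the algebraic independence hypothesis of Remark~\ref{rextdefinitionofhom}) that $\alpha_j$ and $\beta_j$ are indeed the well-defined ultraproduct limits of $\alpha_j^\nn$ and $\beta_j^\nn$.
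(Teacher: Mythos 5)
Your proposal is correct and follows essentially the same route as the paper: realize $\underline\Hom(\bm,\bl)$ as the filtered ultraproduct of the classical bimodules $\Hom_\ck(V_{\mu^\nn},V_{\l^\nn})$ (via Remark~\ref{rextdefinitionofhom}), note that the $n$-th factor lies in $\hc_{\chi_{\l^\nn},\chi_{\mu^\nn}}(\g_n)$, apply the Claim~\ref{cl2} computation to the finite-rank characters, and pass to the ultraproduct. The only difference is that you spell out the coefficient-wise compatibility of the generating function with the ultraproduct, which the paper leaves implicit.
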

\begin{proof}
By Lemma \ref{rextdefinitionofhom}, we can find two sequences of partitions $\l^\nn, \mu^\nn$, so that
$$
\underline \Hom(\bm, \bl) = \prod_\cf^F \Hom_\ck(V^\nn_{\mu^\nn}, V^\nn_{\l^\nn}).
$$

For each $n$
$$
\Hom_\ck(V^\nn_{\mu^\nn}, V^\nn_{\l^\nn})\in \hc_{\chi^\nn,\psi^\nn},
$$
where
$$
\chi^\nn = \chi_{\l^\nn},
$$
$$
\psi^\nn= \chi_{\mu^\nn}.
$$
The proof of Claim \ref{cl2} works also for central characters of $U(\mathfrak{g}_n)$. So, we know that
$$
\chi^\nn(\log(q)) = q^{\frac{n+1}{2}}( \sum_{j=1}^k [\alpha^\nn_j+l]_q q^{-j} + \sum_{j=1}^l [\beta^\nn_j-m]_q q^{-\beta^\nn_j+j-1-k} + \sum_{j=1}^{m} [\gamma_j+l]_q q^{-k-j})
$$
and
$$
\psi^\nn(\log(q))= q^{\frac{n+1}{2}}( \sum_{j=1}^k [\alpha^\nn_j+a_j+l]_q q^{-j} + \sum_{j=1}^l [\beta^\nn_j-m+b_j]_q q^{-\beta^\nn_j-b_j+j-1-k} + \sum_{j=1}^{m} [\delta_j+l]_q q^{-k-j}).
$$
We get the desired result by taking the ultraproduct.
\end{proof}

\section{Spherical bimodules}
\label{sspher}

In this section we will provide another approach to constructing the bimodules $\underline \End(\bl)$ by looking at the two-sided ideals in $U_\chi$.

\begin{definition}\label{dspherical}
A bimodule $M\in \hc(\mathfrak{g}_t)$ is called spherical if it contains a subobject isomorphic to $\one\in\cc_t$, which generates $M$ as a $(\mathfrak{g}_t,\mathfrak{g}_t)$-bimodule.
\end{definition}

Proposition $3.22.$ in \cite{E} says that any irreducible spherical bimodule is a quotient of $U_\chi$ for some $\chi$. Thus, the description of all irreducible spherical bimodules is equivalent to the description of all maximal two-sided ideals in $U_\chi$. 

It was proved in \cite{E}, Subsection 3.5, that for a generic $\chi$ the algebra $U_\chi$ is simple. However, we have seen that $\underline \End(\bl)$ is a nontrivial quotient of $U(\mathfrak{g})$. Thus, central characters $\chi$ as in Claim \ref{cl2} provide us with an example of non-simple algebras $U_\chi$. 

Our aim is to take the filtered ultraproduct of the annihilators $\mathrm{Ann}(M_n)$ of some $\mathfrak{g}_n$-modules to obtain two-sided ideals in $U(\mathfrak{g}_t)$. However, the problem one immediately encounters when testing this idea is that the ideals $\mathrm{Ann}(M_n)$ could be generated  by elements of degrees that depend on $n$. Hence, in most cases the filtered ultraproduct will turn out to be zero. We would like to understand an upper bound for the degree $k$, such that $F^k\mathrm{Ann}(M_n)$ is nonzero for a given $\mathfrak{g}_n$-module $M$ ($F$ being the PBW-filtration). Unfortunately, this is unknown in general.

However, if $M = V^\nn_\bn$ is a finite-dimensional $\mathfrak{g}_n$-module, one can easily provide such a bound, which we will do below.

\subsection{Annihilators of finite-dimensional representations}

Let us fix the standard basis $\{E_{ij}\}_{i,j\le n}$ of $\gl_n$.

\begin{lemma}\label{lannsim}
For any $m,n\in\N$ and any $i,j,k,l\le n$ the elements
$$
E_{ij}(E_{kl}+\delta_{kl})-E_{il}(E_{kj}+\delta_{kj})
$$
lie in $\mathrm{Ann}(S^mV^\nn)\subset U(\gl_n)$ and the elements
$$
(E_{ij}-\delta_{ij})E_{kl}-(E_{il}-\delta_{il})E_{kj}
$$
lie in $\mathrm{Ann}(S^m(V^\nn)^*)$.
\end{lemma}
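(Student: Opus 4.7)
The plan is to realize both representations concretely and verify that each claimed element is in fact the zero operator on the nose. Identify $S^mV$ with the space of degree-$m$ homogeneous polynomials in commuting variables $x_1,\dots,x_n$, on which $E_{ij}$ acts via the vector field $x_i\partial_{x_j}$. Using only $[\partial_{x_j},x_k]=\delta_{jk}$ one rewrites
\[
E_{ij}E_{kl}\;=\;x_i\partial_{x_j}\,x_k\partial_{x_l}\;=\;x_ix_k\partial_{x_j}\partial_{x_l}+\delta_{jk}\,E_{il}.
\]
The second-order piece $x_ix_k\partial_{x_j}\partial_{x_l}$ is symmetric in the pair $(j,l)$, hence cancels when one subtracts $E_{il}E_{kj}$; what remains is $\delta_{jk}E_{il}-\delta_{lk}E_{ij}$. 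Adding the scalar corrections $\delta_{kl}E_{ij}-\delta_{kj}E_{il}$ gives
\[
E_{ij}(E_{kl}+\delta_{kl})-E_{il}(E_{kj}+\delta_{kj})
=(\delta_{jk}E_{il}-\delta_{lk}E_{ij})+(\delta_{kl}E_{ij}-\delta_{kj}E_{il})=0,
\]
so this element acts as the \emph{zero} operator on every $S^mV$, which is strictly stronger than lying in the annihilator.

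For the dual claim I would perform the mirror computation. Realize $S^mV^*$ as polynomials in $y_i:=v_i^*$; the standard dual action $(E_{ij}\phi)(v)=-\phi(E_{ij}v)$ gives $E_{ij}\cdot y_k=-\delta_{ik}y_j$, i.e.\ $E_{ij}$ acts as the differential operator $-y_j\partial_{y_i}$. The same move now yields
\[
E_{ij}E_{kl}\;=\;y_jy_l\partial_{y_i}\partial_{y_k}-\delta_{il}\,E_{kj},
\]
whose second-order part is again symmetric in $(j,l)$, so subtracting $E_{il}E_{kj}$ kills it and leaves $E_{ij}E_{kl}-E_{il}E_{kj}=\delta_{ij}E_{kl}-\delta_{il}E_{kj}$. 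The correction $-\delta_{ij}E_{kl}+\delta_{il}E_{kj}$ coming from expanding $(E_{ij}-\delta_{ij})E_{kl}-(E_{il}-\delta_{il})E_{kj}$ then cancels this exactly, proving the identity on $S^mV^*$. The sign flip in the dual action is precisely what converts the shift $E_{kl}\mapsto E_{kl}+\delta_{kl}$ on the inner factor (for $S^mV$) into the shift $E_{ij}\mapsto E_{ij}-\delta_{ij}$ on the outer factor (for $S^mV^*$).

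There is essentially no obstacle here: both identities reduce to a one-line Kronecker-delta calculation once the symmetric-tensor realization is in place, and the only care needed is to track the signs correctly in the dual case. Conceptually, the expression $E_{ij}(E_{kl}+\delta_{kl})-E_{il}(E_{kj}+\delta_{kj})$ is the column-determinant of the Capelli-type shifted minor $\bigl(\begin{smallmatrix}E_{ij}&E_{il}\\ E_{kj}+\delta_{kj}&E_{kl}+\delta_{kl}\end{smallmatrix}\bigr)$, and its vanishing on symmetric powers of the defining representation is a classical consequence of the Capelli identity; an analogous interpretation holds in the dual case. This viewpoint is optional—the direct calculation sketched above is entirely self-contained and is the route I would take.
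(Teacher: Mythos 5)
Your proof is correct and takes exactly the same route as the paper: realize $S^mV$ and $S^mV^*$ as polynomial spaces with $E_{ij}$ acting as $x_i\partial_{x_j}$ and $-y_j\partial_{y_i}$ respectively, then verify the identity by a Kronecker-delta computation. The paper merely records the differential-operator realization and leaves the verification implicit; you have spelled it out, correctly, including the observation that the elements are in fact identically zero as operators, not just annihilating.
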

\begin{proof}
It follows from the fact that $E_{ij}$ acts as $x_i\frac{\partial}{\partial x_j}$ on $S^mV^\nn$ considered as the space of polynomial functions on $(V^\nn)^*$ of degree $m$. Similarly, $E_{ij}$ acts as $-x^*_j\frac{\partial}{\partial x^*_i}$ on $S^m(V^\nn)^*$.
\end{proof}
\begin{remark}
If all indices $i,j,k,l$ are pairwise distinct for both $S^mV^\nn$ and $S^m(V^\nn)^*$, we get just the corresponding 2-by-2 minor of the matrix $A=(E_{ij})_{ij}\subset U(\gl_n)\otimes Mat_n$.
\end{remark}

\begin{corollary}\label{cann}
For any $\l_1,\ldots,\l_k, \mu_1,\ldots, \mu_l \in\N$ and for any $I=\{i_0,\ldots, i_{p}\}, J=\{j_0,\ldots,j_{p}\}$ sets of $p+1$ pairwise distinct indices, where $p=k+l$, the $(p+1)$-by-$(p+1)$ minor $A_{I,J}$ of the matrix $A=(E_{ij})_{ij}\in U(\gl_n)\otimes Mat_n$ annihilates 
$$
W_{\l,\mu}:=S^{\l_1}V\otimes\ldots\otimes S^{\l_k}V\otimes S^{\mu_1}V^*\otimes\ldots \otimes S^{\mu_l}V^*.
$$
\end{corollary}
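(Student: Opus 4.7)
My plan is to reduce the statement to a classical commutative-rank argument by passing to the polynomial realization from the proof of Lemma \ref{lannsim}. Represent each factor $S^{\l_a} V$ as the space of polynomials of degree $\l_a$ in variables $x^{(a)}_1, \dots, x^{(a)}_n$, and each $S^{\mu_b} V^*$ as the polynomials of degree $\mu_b$ in $y^{(b)}_1, \dots, y^{(b)}_n$. On $W_{\l, \mu}$ the element $E_{ij}$ then acts by the Leibniz rule as
$$
E_{ij} \;=\; \sum_{a=1}^{k} x^{(a)}_i\, \partial_{x^{(a)}_j} \;-\; \sum_{b=1}^{l} y^{(b)}_j\, \partial_{y^{(b)}_i},
$$
i.e.\ as a sum of $p = k + l$ first-order differential operators, each of which is a ``rank one'' tensor of the form $u_i w_j$.

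I interpret ``pairwise distinct'' to mean $|I \cup J| = 2(p+1)$, matching the analogous phrasing in the Remark following Lemma \ref{lannsim}. Under this hypothesis, $[E_{i_r, j_s}, E_{i_{r'}, j_{s'}}] = \delta_{j_s, i_{r'}} E_{i_r, j_{s'}} - \delta_{j_{s'}, i_r} E_{i_{r'}, j_s}$ vanishes identically, since $I \cap J = \emptyset$. Hence the $(p+1) \times (p+1)$ submatrix $\widetilde A = (E_{i_r, j_s})_{r,s=0}^{p}$ has pairwise commuting entries, and $A_{I,J}$ is an ordinary commutative determinant. Acting on $W_{\l, \mu}$, it decomposes as a sum of $p$ rank-one operator-valued matrices,
$$
\widetilde A \;=\; \sum_{a=1}^{k} \mathbf{x}^{(a)}_I \,(\boldsymbol\partial^{(a)}_J)^{T} \;-\; \sum_{b=1}^{l} \mathbf{y}^{(b)}_J \,(\boldsymbol\partial^{(b)}_I)^{T},
$$
where $\mathbf x^{(a)}_I = (x^{(a)}_{i_r})_r$, $\boldsymbol \partial^{(a)}_J = (\partial_{x^{(a)}_{j_s}})_s$, and similarly for the $y$-variables. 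Within each summand the row vector commutes with the column vector because $I \cap J = \emptyset$ forces $i_r \ne j_s$; across summands everything commutes since different variable families are involved. Therefore we have a sum of $p$ rank-one matrices over a commutative ring, and a standard multilinear expansion together with pigeonhole (each term involves assigning $p+1$ rows to only $p$ summands, so two rows come from the same rank-one summand and are proportional) gives $\det \widetilde A = 0$ on $W_{\l,\mu}$, which is the claim.

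The main obstacle, which I expect to be essentially bookkeeping, is pinning down precisely which minor of $A = (E_{ij})$ is meant by $A_{I,J}$: for noncommuting entries there is no canonical determinant, so the statement implicitly depends on a convention. The pairwise-distinctness hypothesis is exactly what makes this ambiguity disappear, since it forces the entries of $\widetilde A$ to commute and every reasonable ordering convention collapses to the same commutative determinant. If the statement were intended to allow $I \cap J \neq \emptyset$, one would need to replace $A_{I,J}$ with a Capelli-type minor carrying the same $\delta$-shifts as in Lemma \ref{lannsim}; the proof would extend by recording the resulting diagonal corrections in each rank-one summand before running the same commutative-determinant argument.
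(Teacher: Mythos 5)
Your argument is correct and is, at its core, the same proof as the paper's: you expand the $(p+1)\times(p+1)$ determinant multilinearly in rows, use pigeonhole to force two rows from the same one of the $p$ summands, and observe those two rows are proportional (equivalently, the corresponding $2\times 2$ minor kills $W_{\l,\mu}$). The paper realizes this in $U(\gl_n)^{\otimes p}$ via $\Lambda^{p+1}B$ acting on $\Lambda^{p+1}V$ and then invokes Lemma~\ref{lannsim}, whereas you pass to the polynomial/differential-operator realization at the start, which makes the commutativity and the ``sum of $p$ rank-one matrices'' structure explicit; this is a cleaner repackaging but not a different route. Your explicit remark that $I\cap J=\emptyset$ is what makes the entries of $\widetilde A$ commute (and what lets the Capelli $\delta$-shifts in Lemma~\ref{lannsim} disappear) correctly identifies an assumption the paper uses only implicitly, via the remark following that lemma.
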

\begin{proof}
$U(\gl_n)^{\otimes p}$ acts naturally on $W_{\l,\mu}$ and the action of $E_{ij}\in U(\gl_n)$ on $W_{\l,\mu}$ coincides with the action of $\Delta^{p-1}E_{ij}\subset (U(\gl_n))^{\otimes p}$, where 
$$
\Delta E_{ij}= E_{ij}\otimes 1+1\otimes E_{ij},
$$
i.e. $\Delta$ is the comultiplication on $U(\gl_n)$. 

For $1\le m\le p$ let $E_{ij}^{(m)}\subset (U(\gl_n))^{\otimes p}$ be the elements
$$
E_{ij}^{(m)} := \underbrace{1\otimes\ldots \otimes 1}_{m-1} \otimes E_{ij} \otimes 1 \otimes \ldots \otimes 1.
$$
Then
$$\Delta^{p-1}(E_{ij}) = \sum_{m=1}^{p} E_{ij}^{(m)}. $$

We note that $E_{ij}^{(a)}$ and $E_{kl}^{(b)}$ commute for all $i,j,k,l\le n$ and distinct $a$ and $b$ in $\{1,\ldots, p\}$.

Let $A^{(m)} = (E_{ij}^{(m)})_{ij} \in (U(\gl_n))^{\otimes p}\otimes Mat_n = (U(\gl_n))^{\otimes p}\otimes \End_\ck(V^\nn)$. Then the action of the minor $A_{I,J}$ on $W_{\l,\mu}$ coincides with the action of $(A^{(1)}+\ldots+A^{(p)})_{I,J}$.

Let $B:= \sum_{m=1}^p A^{(m)}$. Then $\Lambda^{p+1} B\in (U(\gl_n))^{\otimes p}\otimes \End_\ck(\Lambda^{p+1} V^\nn)$ is well-defined and acts naturally on $$(U(\gl_n))^{\otimes p}\otimes \Lambda^{p+1} V^\nn$$ (with $(U(\gl_n))^{\otimes p}$ acting on itself via the left multiplication). We have $$B_{I,J} = \langle v_{i_0}\wedge \ldots \wedge v_{i_p}| \Lambda^{p+1}B(v_{j_0}\wedge \ldots\wedge v_{j_p})\rangle, $$
where, given some algebra $R$ and some basis $f_i$ in a free $R$-module $M$, for any $m\in M$ we denote by  $\langle f_i|~m\rangle$ the coefficient of $f_i$ in the basis decomposition of $m$.

Now 
$$
\Lambda^{p+1}B(v_{j_0}\wedge \ldots\wedge v_{j_p}) = \sum_{1\le m_0,\ldots,m_p\le p} A^{(m_0)}(v_{j_0})\wedge\ldots\wedge A^{(m_p)}(v_{j_p}).
$$

In each summand at least one of the matrices $A^{(m)}$ occurs twice. Since matrix elements of distinct $A^{(m)}$ commute, we can move these two to the left. So, up to permutation of indices, each summand is equal to
$$
A^{(m)}v_{j_1}\wedge A^{(m)}v_{j_2}\wedge\ldots = \sum A^{(m)}_{\{k_1,k_2\},\{j_1,j_2\}}v_{k_1}\wedge v_{k_2}\wedge \ldots
$$

Since we only care about the summands with $k_1,k_2\in I$ and $k_1\neq k_2$, we get that by the previous lemma $$A^{(m)}_{\{k_1,k_2\},\{j_1,j_2\}} = E_{k_1 j_1}^{(m)}E_{k_2 j_2}^{(m)} - E_{k_1 j_2}^{(m)}E_{k_2 j_1}^{(m)} $$ acts trivially on $W_{\l,\mu}$. Hence, $A_{I,J}\in \mathrm{Ann}(W_{\l,\mu})$.

\end{proof}

Now let us take some $\bl=(\l,\bar\l)\in \cp\times\cp$. Let $d = d(\l), \bar d = d(\bar\l)$, where $d(\nu)$ denotes the length of the main diagonal of the Young diagram of $\nu$. For $1\le i\le d$ put 
$$
\mu_i= \l_i - d,
$$
$$
\nu_i= \l'_i. 
$$

Similarly define $\bar\mu,\bar\nu.$

Then $V_\bl$ is a submodule in
$$
\bigotimes_{i=1}^{d} S^{\mu_i}V^\nn\otimes  \bigotimes_{i=1}^{d} \Lambda^{\nu_i}V^\nn \otimes \bigotimes_{i=1}^{\bar d} S^{\bar\mu_i}(V^\nn)^*\otimes  \bigotimes_{i=1}^{\bar d} \Lambda^{\bar\nu_i}(V^\nn)^*.
$$

Put $k = \max(d,\bar d)$. By the discussion above $V_\bl$ is a submodule in
$$
R_k= (S^{\bullet}V^\nn)^{\otimes k}\otimes(\Lambda^{\bullet}V^\nn)^{\otimes k}\otimes (S^\bullet (V^\nn)^*)^{\otimes k}\otimes(\Lambda^\bullet (V^\nn)^*)^{\otimes k}.
$$

\begin{lemma}\label{lann} If $R_k$ is as above, then
$$F^{(2k+1)(2k(2k+1)+1)}\mathrm{Ann}(R_{k}) \neq 0.$$
\end{lemma}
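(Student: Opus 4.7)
My plan is to construct an explicit nonzero element of $\mathrm{Ann}(R_k)\cap F^{M}U(\gl_n)$ for $M=(2k+1)(2k(2k+1)+1)$, and to verify both annihilation and non-vanishing separately. I decompose $R_k = R_k^S \otimes R_k^\Lambda$, with $R_k^S = (S^\bullet V)^{\otimes k}\otimes (S^\bullet V^*)^{\otimes k}$ consisting of $2k$ symmetric-power factors and $R_k^\Lambda = (\Lambda^\bullet V)^{\otimes k}\otimes (\Lambda^\bullet V^*)^{\otimes k}$ of $2k$ exterior-power factors. By Corollary \ref{cann} applied to $R_k^S$, any $(2k+1)\times(2k+1)$ minor $D$ of the matrix $A=(E_{ij})$ lies in $\mathrm{Ann}(R_k^S)$ and has PBW-degree $2k+1$.

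For $R_k^\Lambda$, the key observation is that each diagonal generator $E_{ii}$ has only finitely many eigenvalues: on $\Lambda^\bullet V$ its eigenvalues are in $\{0,1\}$ and on $\Lambda^\bullet V^*$ in $\{-1,0\}$, so on $R_k^\Lambda$ the spectrum of $E_{ii}$ lies in $\{-k,\ldots,k\}$. Consequently the polynomial $\prod_{j=-k}^{k}(E_{ii}-j)$ of degree $2k+1$ annihilates $R_k^\Lambda$. Thus on each of the two tensor halves of $R_k$, we have degree-$(2k+1)$ annihilators of very different types.

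I then form $\Xi = D^{N}$ with $N = 2k(2k+1)+1$, giving an element of PBW-degree $(2k+1)N = M$. The claim is that $\Xi$ annihilates the full $R_k$: while $D$ only kills $R_k^S$ directly, iterating $D$ on $R_k = R_k^S\otimes R_k^\Lambda$ uses the Hopf coproduct of $U(\gl_n)$ to distribute $D$ across the tensor factors; roughly, each application of $D$ advances a finite filtration on $R_k^\Lambda$ indexed by the values of the $2k+1$ relevant $E_{ii}$-eigenvalues, and after $N$ iterations this filtration collapses. Alternatively, one can view $\Xi$ as constructed by combining the minor relations on $R_k^S$ with the spectral polynomial relations on $R_k^\Lambda$ in such a way that in the expansion of $\Delta^{4k-1}(\Xi)$, every summand has a tensor factor acting as zero on one of $R_k^S$ or $R_k^\Lambda$.

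The main obstacle is verifying the annihilation $\Xi \cdot R_k = 0$ rigorously, since this requires a careful combinatorial analysis of how the noncommutative product $D^N$ interacts with the coproduct on the mixed symmetric/exterior tensor product. Non-vanishing of $\Xi$ in $U(\gl_n)$ is comparatively straightforward: passing to the associated graded $\gr U(\gl_n) = S(\gl_n) = \C[x_{ij}]$, the principal symbol of $D^N$ is the $N$-th power of the classical commutative $(2k+1)\times(2k+1)$ minor of the matrix $(x_{ij})$, which is a nonzero irreducible polynomial as soon as $n\geq 2k+1$; hence $\Xi$ has PBW-degree exactly $M$ and is nonzero in $U(\gl_n)$.
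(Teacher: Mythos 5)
Your proposal identifies the right degree bound and has a correct (and correctly reasoned) non-vanishing argument via $\gr U(\gl_n) = S(\gl_n)$, but the crucial annihilation claim $\Xi\cdot R_k = 0$ is not proved — you acknowledge this yourself — and the strategy you sketch for it does not straightforwardly work.

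The difficulty is that your decomposition $R_k = R_k^S\otimes R_k^\Lambda$ is not aligned with how $U(\gl_n)$ acts. You observe that $D$ kills $R_k^S$ and that spectral polynomials in the $E_{ii}$ kill $R_k^\Lambda$, but $D^N$ acts on the tensor product $R_k^S\otimes R_k^\Lambda$ through the Hopf coproduct, so that $D^N\cdot(v\otimes w) = \sum D'v\otimes D''w$ with $D', D''$ running through the pieces of $\Delta(D^N)$; having a separate annihilator for each tensor factor does not produce an annihilator for the product, and the ``finite filtration on $R_k^\Lambda$ indexed by $E_{ii}$-eigenvalues'' that $D$ is supposed to advance is never defined, nor is it clear why $D$ — a $(2k+1)\times(2k+1)$ minor involving off-diagonal $E_{ij}$ — would lower such a filtration.

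The paper's argument avoids this by never separating the factors. Writing $R_k = S^\bullet(V^{\oplus k}\oplus V^{*\oplus k})\otimes\Lambda^\bullet(V^{\oplus k}\oplus V^{*\oplus k})$, one chooses index sets $I,J$ of size $2k+1$ with $I\cap J = \emptyset$, so that the $E_{ij}$ with $i\in I$, $j\in J$ pairwise commute and the subalgebra $U_{I,J}$ they generate is a polynomial ring. The action of $U_{I,J}$ on $R_k$ is a map $\phi$ into a single free supercommutative algebra with even generators $x,y$ and odd generators $\xi,\eta$, where $\phi(E_{ij})$ is a sum of $xy$-terms and $\xi\eta$-terms. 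Because $A_{I,J}$ annihilates the purely even (symmetric) part by Corollary~\ref{cann}, setting $\xi=\eta=0$ in $\phi(A_{I,J})$ gives $0$, so $\phi(A_{I,J})$ lies in the ideal generated by the odd variables; moreover every monomial of $\phi(A_{I,J})$ has an equal number of $\xi$'s and $\eta$'s. Since there are only $N = 2k(2k+1)$ variables of each type, $\phi(A_{I,J})^{N+1} = 0$, i.e.\ $A_{I,J}^{N+1}\in\mathrm{Ann}(R_k)$, and $A_{I,J}^{N+1}\neq 0$ for exactly the reason you give. The conceptual gain over your sketch is that the interplay between the symmetric and exterior factors is captured automatically by working in the supercommutative target of $\phi$, rather than having to be re-assembled coproduct by coproduct after the fact.
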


\begin{proof}

We have $R_{k}=S^\bullet((V^\nn)^{\oplus k}\oplus (V^\nn)^{*\oplus k})\otimes \Lambda^\bullet((V^\nn)^{\oplus k}\oplus (V^\nn)^{*\oplus k})$. Then $E_{ij}$ acts on the symmetric algebra  as 
$$
\sum_{a=1}^{k} x_{ia}\frac{\partial}{\partial x_{ja}} - \sum_{b=k+1}^{2k} x_{jb}\frac{\partial}{\partial x_{ib}},
$$
where 
$$
S^\bullet (V^{\nn\oplus k}) = \ck[x_{ia}]_{i\le n, a\le k},
$$
$$
S^\bullet((V^\nn)^{*\oplus k})= \ck[x_{ib}]_{i\le n, k< b\le 2k}.
$$
and $x_{ia},x_{ib}$ are even variables.

Similarly, it acts on the exterior algebra as 
$$
\sum_{a=1}^{k} \xi_{ia}\frac{\partial}{\partial \xi_{ja}} - \sum_{b=k+1}^{2k}  \xi_{jb}\frac{\partial}{\partial \xi_{ib}},
$$
where $\xi_{ia},\xi_{ib}$ for $i\le n, a\le k, k<b\le 2k$ are odd variables. 

Let us fix $I=\{i_0,\ldots,i_{2k}\},J=\{j_0,\ldots,j_{2k}\}$: some sets of $2k+1$ pairwise distinct indices with $I\cap J=\emptyset$. For distinct $i,j$ multiplication by $x_{ia}$ (or $\xi_{ia}$) commutes with differentiation $\frac{\partial}{\partial x_{ja}}$ (or $\frac{\partial}{\partial \xi_{ja}}$). Let us denote $$y_{ja}= \frac{\partial}{\partial x_{ja}},$$ $$\eta_{ja}= \frac{\partial}{\partial \xi_{ja}}.$$

Then the representation of the subalgebra $U_{I,J}$ of $U(\mathfrak{g})$ generated by $E_{ij}$ with $i\in I$ and $j\in J$ on $R_{k}$ is given by a homomorphism
$$
\phi:U_{I,J} \to \mathbbm{k}[x_{ia},x_{jb},y_{ib},y_{ja},\xi_{ia},\xi_{jb},\eta_{ib},\eta_{ja}],
$$
$$
E_{ij}\mapsto \sum_{a=1}^{k} x_{ia} y_{ja} - \sum_{b=k+1}^{2k} x_{jb}y_{ib} + \sum_{a=1}^{k} \xi_{ia}\eta_{ja} - \sum_{b=k+1}^{2k}  \xi_{jb}\eta_{ib},
$$
where the algebra on the right-hand-side is a free supercommutative algebra with even generators $x_{ia},x_{jb},y_{ib},y_{ja}$  and odd generators $\xi_{ia},\xi_{jb},\eta_{ib},\eta_{ja}$, and the indices run over the following sets:$$i\in I,j\in J, a\in \{1,\ldots,k\}, b\in\{k+1,\ldots,2k\}.$$

By Corollary \ref{cann}, $A_{I,J}$ (the corresponding $(2k+1)$ by $(2k+1)$ minor) annihilates $S^\cdot(V^{\nn\oplus k}\oplus (V^\nn)^{*\oplus k})$. So, $\phi(A_{I,J})$ is a differential operator that acts by zero on all polynomials in variables $x_{l,a}, x_{l,b}$ for $l\le n,$ and $a\le k, k<b\le 2k$. Therefore, $\phi(A_{I,J})$ lies in the nilradical of our algebra (i.e. it lies in the ideal generated by variables $\xi,\eta$). Moreover, each nonzero monomial in $\phi(A_{I,J})$ must contain equal number of variables $\xi$ and $\eta$, because it is true for $\phi(E_{ij})$. Since the number of odd variables in our algebra is equal to $4k(2k+1)$, we have $\phi(A^{2k(2k+1)+1}_{I,J})=0$.

Finally, since $\deg A_{I,J} = 2k+1$ and $A_{I,J}^{2k(2k+1)+1}\in \mathrm{Ann}(R_k)$ we get that
$$
F^{(2k+1)(2k(2k+1)+1)}\mathrm{Ann} (R_k)\neq 0.
$$

\end{proof} 

\subsection{The bimodules $\underline{\mathrm{Ann}}(\bl)$}

\begin{corollary}
If $\bl^\nn=(\l^\nn,\bar{\l^\nn})$ is a sequence of bipartitions, such that $d(\l^\nn), d(\bar{\l^\nn})$ are bounded, then 
there exists $N\in\N$ with
$$
F^N\mathrm{Ann}(V^\nn_{\bl^\nn})\neq 0
$$
for all $n$.

The filtered ultraproduct
$$
\prod^F_\cf \mathrm{Ann}(V^\nn_{\bl^\nn})
$$
is nonzero and lies in $\Ind\cc_t$.
\end{corollary}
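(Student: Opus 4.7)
The plan is to apply Lemma~\ref{lann} uniformly in $n$ so as to obtain a single degree $N$ at which the annihilator is detected, then invoke Lemma~\ref{lwelldefup} to realize the filtered ultraproduct as an object of $\Ind\cc_t$, and finally combine the finiteness of the filtration of Example~\ref{exfiltr} with the ultrafilter axiom to conclude non-vanishing.

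For the first claim, set $k := \max\bigl(\sup_n d(\l^\nn),\ \sup_n d(\bar\l^\nn)\bigr)$; this is finite by the hypothesis that the diagonals are universally bounded. For every sufficiently large $n$, the simple $GL_n$-module $V_{\bl^\nn}$ embeds into the module $R_k$ constructed right before Lemma~\ref{lann}, so $\mathrm{Ann}(R_k) \subset \mathrm{Ann}(V_{\bl^\nn})$. Applying Lemma~\ref{lann}, the uniform exponent $N := (2k+1)\bigl(2k(2k+1)+1\bigr)$ works for every such $n$, giving $F^N\mathrm{Ann}(V_{\bl^\nn}) \neq 0$.

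Next, write $I_n := \mathrm{Ann}(V_{\bl^\nn})$ with its induced PBW filtration $F^m I_n := F^m U(\gl_n) \cap I_n$. By Example~\ref{expbw}, $F^m I_n \subset F^m U(\gl_n)$ lies in the $m$-th filtered piece of $\cc_n$ coming from Example~\ref{exfiltr}, so the second hypothesis of Lemma~\ref{lwelldefup} is satisfied (with the PBW filtration playing the role of $\widetilde F$ there). Uniform boundedness of simple multiplicities follows from $[F^m I_n : V_\bn] \le [F^m U(\gl_n) : V_\bn]$ together with the fact that $U(\gl_t) = \prod_\cf^F U(\gl_n)$ is already a well-defined ind-object of $\cc_t$ (Section~\ref{suniv}), which forces these multiplicities to be eventually constant in $n$. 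Lemma~\ref{lwelldefup} then yields $\prod_\cf^F I_n \in \Ind\cc_t$.

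It remains to show that this ultraproduct is nonzero. Let $F^N S$ denote the $N$-th level of the filtration of Example~\ref{exfiltr} on the index set $S$ of simples: this is a \emph{finite} set. For each $\bn \in F^N S$, set
$$
U_\bn := \{\,n \in \N : [F^N I_n : V_\bn] > 0\,\}.
$$
The first step shows that $\bigcup_{\bn \in F^N S} U_\bn$ contains all sufficiently large $n$ and thus lies in $\cf$; finiteness of $F^N S$ together with the ultrafilter axiom forces some $U_{\bn_0} \in \cf$. Combined with the uniform bound on $[F^N I_n : V_{\bn_0}]$, the argument of Lemma~\ref{lwelldefup} then delivers $[\prod_\cf^F I_n : V_{\bn_0}] > 0$, so the ultraproduct is nonzero. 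The main subtlety I expect lies in cleanly separating the two uses of the symbol $F$ (PBW versus simple-type filtration of Example~\ref{exfiltr}) and in verifying that stabilization of multiplicities for $U(\gl_n)$ transfers to its submodule $\mathrm{Ann}(V_{\bl^\nn})$; the ultrafilter pigeonhole at the end is then routine.
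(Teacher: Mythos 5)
Your proposal is correct and follows essentially the same route as the paper's proof: both take $k$ to be the uniform bound on diagonal lengths, invoke Lemma~\ref{lann} with $N=(2k+1)(2k(2k+1)+1)$, verify the hypotheses of Lemma~\ref{lwelldefup} using $F^{m}\mathrm{Ann}(V_{\bl^\nn})\subset F^{m}U(\gl_n)$, and conclude. Your proposal merely spells out two points that the paper leaves implicit, namely (i) that the multiplicity bound required by Lemma~\ref{lwelldefup} is inherited from $U(\gl_t)$ being a well-defined ind-object, and (ii) the ultrafilter pigeonhole over the finite index set $F^{N}S$ that upgrades $F^{N}\mathrm{Ann}(V_{\bl^\nn})\neq 0$ for each $n$ to $\prod_\cf F^{N}\mathrm{Ann}(V_{\bl^\nn})\neq 0$.
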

\begin{proof}
The first part of the statement is just a consequence of Lemma \ref{lann}: we can take $k = \max(d(\l^\nn), d(\bar\l^\nn))$ and 
$$
N  = (2k+1)(2k(2k+1)+1).
$$

Now, since for any $l$
$$
F^l \mathrm{Ann}(V^\nn_{\bl^\nn})\subset F^l U(\mathfrak{g}_n)
$$
and, consequently, bimodules $(\mathrm{Ann}(V^\nn_{\bl^\nn}), F)$ satisfy the conditions of Lemma \ref{lwelldefup}, we have
$$
\underline{\mathrm{Ann}}(\bl)= \prod^F_\cf \mathrm{Ann}(V^\nn_{\bl^\nn})\in \Ind\cc_t.
$$

Moreover, $F^N\underline{\mathrm{Ann}}(\bl) = \prod_\cf F^N\mathrm{Ann}(V_{\bl^\nn}) \neq 0.$
\end{proof}

\begin{remark}
Let $\bl^\nn$ be as in Definition \ref{dend} and $\bl = \prod_\cf \bl^\nn$.  Then
$$
\underline \End(\bl) = U(\mathfrak{g}_t)/\underline{\mathrm{Ann}}(\bl),
$$
where 
$$
\underline{\mathrm{Ann}}(\bl) = \prod_\cf^F \mathrm{Ann}(V_{\bl^\nn}).
$$
\end{remark}

\begin{remark}\label{rdiag}
Suppose $\l^\nn$ is a sequence of partitions, such that for all $n$
\begin{itemize}
    \item $d(\l^\nn)<D$ for some $D\in\Z_{\ge 0}$,
    \item $\ell(\l^\nn)\ll n$.
\end{itemize}

 By the properties of the ultrafilter, we have that $d(\l^\nn)$ is constant for almost all $n$. Denote this constant by $d$.

Then for every $i> d$ we have that $\l^\nn_i, (\l^\nn)'_i \le d$ for almost all $n$. Thus, they are constant for almost all $n$.

Now let $k$ be the maximal integer for which $\l^\nn_k$ is not bounded for almost all $n$, or, in other words, the set of $\{n\in\mathbb N|~\l^\nn_k > K\}$ is in $\cf$ for every $K$ (note the similarity with the condition in Definition \ref{deftheta}). By the above, we have $k\le d$. Note that since $\l^\nn$ are partitions, $\l^\nn_i\ge \l^\nn_k$ for every $i\le k$, so if $\l_k^\nn$ are unbounded, then $\l^\nn_i$ are too. We define the constant $l$ in the same way, but for partitions $(\l^\nn)'$.

Let us carry out the ``cutting" procedure for partitions $\l^\nn$ and $k,l$ as above (see Figure \ref{fig1}). We have $\l^\nn = [\alpha^\nn, \beta^\nn, \gamma^\nn]$. Now since $\l_i^\nn$ are constant for almost all $n$ for $i>k$, we have that $\gamma_i = \l^\nn_{i+k} - l$ are constant as well. So, there exists a partition $\gamma$, such that $\l^\nn = [\alpha^\nn, \beta^\nn, \gamma]$ for almost all $n$.

Then, by our definitions of $k$ and $l$, for the sequences of partitions $\alpha^\nn$ and $\beta^\nn$ we have for every $K>0$ that
$$
\{n\in \mathbb N|~\forall~i\le k:~\alpha_i^\nn>K\},\{n\in \mathbb N|~\forall~i\le l:~\beta_i^\nn>K\}\in\cf.
$$

It differs from the conditions we put on $\l^\nn$ in Definition \ref{dboldlambda}. In this, more general case, for a fixed $i\le k$ we can either have that $\{n\in\mathbb N|~ \alpha_i^\nn - \alpha_{i+1}^\nn>K\}\in\cf $ for all $K>0$ or  that $\alpha_i^\nn - \alpha_{i+1}^\nn$ is constant for almost all $n$. And similarly for $\beta^\nn_i- \beta^\nn_{i+1}$ for $i\le l$. Thus, we can replace the data of $\alpha^\nn, \beta^\nn$ with a \textbf{nice} sequence of partitions $A^\nn=(A^\nn_1, \ldots, A^\nn_{p})$ of some length $p\le k$ and a $p$-tuple of non-increasing integer sequences $(\alpha[1], \ldots, \alpha[p])$ with $\alpha[i]\in \mathbb Z^{k_i}$, where $\sum_{i=1}^p k_i = k$. We then recover $\alpha^\nn$ from the conditions that 
$$
\alpha_{i}^\nn = A_j^\nn + \alpha[j]_{s},
$$
if $i = k_1+\ldots + k_{j-1} + s$ (where $1\le s\le k_j$). See Figure \ref{fig2}.

For $\beta^\nn$ we similarly define the data $B^\nn = (B^\nn_1, \ldots, B^\nn_q), (\beta[1],\ldots, \beta[q])$ where $B^\nn$ is a \textbf{nice} sequence of partitions and $\beta[j]\in \mathbb Z^{l_j}$ are $l_j$-tuples of non-increasing sequences with $\sum_{j=1}^r l_j = l$. We have
$$
\beta_{i}^\nn = B_j^\nn + \beta[j]_{s},
$$
if $i = l_1+\ldots + l_{j-1} + s$ (where $1\le s\le l_j$).

\begin{figure}[h]\caption{Unbounded and bounded differences}\label{fig2}
\begin{tikzpicture}
\filldraw[fill=red!20!white, draw=black] (0,0) rectangle (9,1.5); 
\filldraw[fill=white, draw = white] (0,2) rectangle (1, 1.6);
\draw (9,1.5) -- (11, 1.5);
\draw (11, 1.5) -- (11, 1);
\draw (11, 1) -- (10, 1);
\draw (10,1)-- (10, 0.5);
\draw (10, 0.5) -- (9, 0.5);
\node at (-0.25,0.75){$k_1$};
\node at (4.35, 0.75){$A^\nn_1$};
\draw (0.9, 0.75) -- (4, 0.75);
\draw (4.7, 0.75) -- (8.1, 0.75);
\draw (0.95, 0.85) -- (0.7, 0.75);
\draw (0.7, 0.75) -- (0.95, 0.65);
\draw (8.05, 0.85) -- (8.3, 0.75);
\draw (8.3, 0.75) -- (8.05, 0.65);
\node at (9.5, 1){$\alpha[1]$};

\filldraw[fill=red!20!white, draw=black] (0,0) rectangle (6.5,-1);
\draw (6.5, 0) -- (8, 0);
\draw (8,0) -- (8, -0.5);
\draw (8,-0.5) -- (7.5, -0.5);
\draw (7.5, -0.5) -- (7.5, -1);
\draw (7.5, -1) -- (6.5, -1);
\node at (-0.25, -0.5){$k_2$};
\node at (3.1, -0.5){$A^\nn_2$};
\node at (7, -0.5){$\alpha[2]$};
\draw (0.9, -0.5) -- (2.8, -0.5);
\draw (3.4, -0.5) -- (5.6, -0.5);
\draw (0.95, -0.4) -- (0.7, -0.5);
\draw (0.7, -0.5) -- (0.95, -0.6);
\draw (5.55, -0.4) -- (5.8, -0.5);
\draw (5.8, -0.5) -- (5.55, -0.6);

\filldraw[fill=red!10!white, draw=black] (0,-1) rectangle (5,-2.5);
\node at (2.3,-1.75){$\ldots$};
\node at (-0.25, -1.75){$\vdots$};

\filldraw[fill=red!20!white, draw=black] (0,-2.5) rectangle (2.5, -5.5);
\draw (4.5,-2.5)--(4.5, -3);
\draw (4.5, -3)--(4,-3);
\draw (4,-3)--(4,-3.5);
\draw (4,-3.5)--(3.5,-3.5);
\draw (3.5,-3.5)--(3.5,-4.5);
\draw (3.5,-4.5)--(3,-4.5);
\draw (3,-4.5)--(3,-5);
\draw (3,-5)--(2.5,-5);

\node at (-0.25, -4){$k_p$};
\node at (1.25, -4){$A^\nn_p$};
\node at (3.1, -3.2){$\alpha[p]$};
\draw (1, -4) -- (0.5, -4);
\draw (1.5, -4) -- (2, -4);
\draw (0.55, -3.9) -- (0.4, -4);
\draw (0.4, -4) -- (0.55, -4.1);
\draw (1.95, -3.9) -- (2.1, -4);
\draw (2.1, -4) -- (1.95, -4.1);

\node at (13,0){We have $\alpha_{i}^\nn = A_j^\nn + \alpha[j]_{s}$,};
\node at (13, -1){if $i = k_1+\ldots + k_{j-1} + s$,};
\node at (13, -2){where $1\le s\le k_j$.};
\node at (12, -3){Then the difference $\alpha_i^\nn-\alpha^\nn_{i+1}$ is unbounded};
\node at (12, -4){if and only if $i = k_1+\ldots + k_{j}$ for some $j$.} ;
\draw[decorate,decoration={brace,amplitude=10pt, mirror, raise=4pt}](-0.2, 1.5) -- (-0.2,-5.5);
\node at (-0.9, -1.95){$k$};
\end{tikzpicture}
\end{figure}

 The statement of Proposition \ref{cgupta} is also true for this weaker condition on $\l^\nn$ (i.e. $d(\l^\nn)=d$ for almost all $n$). The generalization of the proof is straightforward, however, the resulting computation is rather bulky.
 
 The appropriate definition of $\mu^\nn$ then requires some conditions on $\mathbf a$ and $\mathbf b$. Namely, we need to make sure that $\alpha[j]_s+a_i\ge \alpha[j]_{s+1} + a_{i+1}$, whenever  $i = k_1+\ldots + k_{j-1} + s$ with $1\le s< k_j$. Similarly, we require that $\beta[j]_s+b_i\ge \beta[j]_{s+1}+a_{i+1}$, whenever $i = l_1+\ldots + l_{j-1}+s$ with $1\le s< l'_j$.
 
 The multiplicities will then depend only on $\gamma, k, l, \mathbf a, \mathbf b, \alpha[j], $ for $1\le j\le m,$ and $\beta[s]$, for $1\le s\le r$. 
\end{remark}

\section{Categorical action of $\mathfrak{sl}_{\Z}$}\label{sslz}

In this section we closely follow the work of Inna Entova-Aizenbud \cite{EA} and define two commuting categorical type $A$ actions on the category $\hc(\gl_t)$.

In the first part of this section we give the basic definitions and restate some of the results from \cite{EA}. 

 We then study the categorical type $A$ action on the subcategory with objects $\underline \Hom(\bm,\bl)$ with fixed $\bm$. It turns out that these categorical actions yield multiple commuting actions of $\ssl_\Z$ on its Grothendieck group (the number of copies of $\ssl_\Z$ acting on it will depend on $\bm$).
 
 In the last part of the section we briefly describe a more general construction to obtain a slightly more interesting $\ssl_\Z$-action. That is, in the light of Remark \ref{rdiag}, we define the bimodules $\underline\Hom(\bm,\bl) = \prod_\cf^F \Hom_\ck (V_{\bm^\nn},V_{\bl^\nn})$, where we require that the lengths of the diagonals of partitions $\l^\nn,\bar\l^\nn, \mu^\nn, \bar\mu^\nn$ are bounded. It is a weaker condition on $\bl^\nn,\bm^\nn$ than  the one posed in Theorem \ref{dhom}, so this construction yields a wider range of bimodules.

\subsection{Basic definitions}

\begin{definition}
The degenerate affine Hecke algebra $aAHA_d$ is the vector space 
$$
\mathbb C[x_1,\ldots, x_d]\otimes\mathbb C[S_d],
$$
with multiplication defined so that $\mathbb C[x_1,\ldots, x_d]\otimes 1$ and $1\otimes \mathbb C[S_d]$ are subalgebras isomorphic to the polynomial algebra $\mathbb C[x_1,\ldots, x_d]$ and the group algebra $\mathbb C[S_d]$ of the symmetric group $S_d$ correspondingly. And with additional relation that
$$
t_j x_k - x_{t_j(k)} t_j = \begin{cases} 1, \text{ if } k = j+1,\\
-1, \text{ if } k = j,\\
0, \text{ otherwise.}
\end{cases}
$$
Here $t_j$ denotes the transposition $(j,j+1)$ in $S_d$.
\end{definition}

The following definition is taken from \cite{EA}, Definition 6.1.1.

\begin{definition}
Let $\mathcal A$ be a Karoubian $\mathbb C$-linear category. A categorical type $A$ action on $\mathcal A$ consists of the data $(F,E,x, \tau)$, where $(E,F)$ are an adjoint pair of (additive) exact endofunctors of $\mathcal A$, $x\in \End(F), \tau\in \End(F^2)$, which satisfy the following conditions:
\begin{itemize}
    \item $F$ is isomorphic to the left adjoint of $E$,
    \item For any $d\ge 2$, the natural transformations $x, \tau$ define an action of the degenerate affine Hecke algebra on $F^d$ by
    \begin{align*}
        dAHA_d = \C[x_1,\ldots, x_d]\otimes \C[S_d]\to \End(F^d)\\
        x_i\mapsto F^{d-i}xF^{i-1}\\
        (i,i+1)\mapsto F^{d-i-1}\tau F^{i-1}.
    \end{align*}
\end{itemize}
\end{definition}

Let $F_c$ be the generalized eigenspace of $x$ corresponding to the eigenvalue $c\in\mathbb C$, i.e. for each $M$ in $ob~\mathcal A$ we define $F_cM = \sum_{k\ge 0} \Ker(x_M - c)^k$ . Then 
    $$
    F = \bigoplus_{c\in\mathbb C} F_c.
    $$
    
Similarly, we get a decomposition $E = \bigoplus_{c\in\C} E_c$, where $E_c$ is left adjoint to $F_c$.

\begin{lemma}
\label{lcategactiononrepglv}
It was shown in \cite{EA},
Section 6.2 that for any rigid symmetric tensor category $\cc$ and any object $V\in \cc$, the category $\Rep_\cc(\gl(V))$ of objects with an action of $\gl(V)$ enjoys a categorical type $A$ action given by the following data:
\begin{itemize}
    \item $F=V\otimes(-), E = V^*\otimes (-)$;
    \item $x_M\in\End(V\otimes M) \cong \Hom(V\otimes V^*\otimes M, M)$ is the action map $\gl(V)\otimes M\to M$;
    \item $\tau = \sigma_{V,V}\times Id \in \End(F^2)$, where $\sigma_{-,-}$ is the symmetric braiding on $\cc$.
\end{itemize}
\end{lemma}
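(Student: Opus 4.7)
The statement is essentially a recollection from \cite{EA}, so my plan is to verify the stated data assemble into a categorical type $A$ action, checking the two bullet points of the definition.

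First, I would verify the biadjunction. Since $\cc$ is rigid symmetric monoidal, $V$ is dualizable with dual $V^*$, which gives canonical unit and counit maps $\coev_V\colon \one\to V\otimes V^*$ and $\ev_V\colon V^*\otimes V\to \one$ (and their mirror images). Tensoring with these produces natural isomorphisms $\Hom(V\otimes X,Y)\cong \Hom(X,V^*\otimes Y)$ and $\Hom(V^*\otimes X,Y)\cong \Hom(X,V\otimes Y)$ for $X,Y\in\Rep_\cc(\gl(V))$; naturality with respect to the $\gl(V)$-action follows because the structure maps on $V\otimes M$ and $V^*\otimes M$ are built from the diagonal action, which commutes with coevaluation/evaluation. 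Thus $F$ is both left and right adjoint to $E$, giving the first bullet for free.

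Second, I would verify that $x$ and $\tau$ generate the dAHA action on $F^d$. The relations to check are (i) the symmetric group relations for the $\tau_i$, (ii) commutativity $[x_i,x_j]=0$, (iii) commutation $\tau_i x_j = x_j \tau_i$ for $j\neq i,i+1$, and (iv) the key mixed relation $\tau_i x_i - x_{i+1}\tau_i = 1$ (equivalently $\tau_i x_{i+1}-x_i \tau_i = -1$). Items (i) and (iii) reduce to standard symmetric-monoidal diagrammatic identities: $\tau_i^2 = \mathrm{id}$ since $\sigma_{V,V}^2=\mathrm{id}$ in a symmetric category, the braid relation is the hexagon axiom, and $\tau_i$ obviously commutes with $x_j$ acting on a non-adjacent factor. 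For (ii), the action of $x_i$ on $V^{\otimes d}\otimes M$ comes from the coevaluation inserting a $V^*$ against the $i$-th $V$-slot followed by the global $\gl(V)$-action on $V^{\otimes d}\otimes M$; because the $\gl(V)$-action on the tensor product is the sum of actions on each factor and coevaluations at disjoint slots commute, the resulting endomorphisms commute.

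The serious step is (iv). Here one must compute $\tau_i x_i - x_{i+1}\tau_i$ as an endomorphism of $V^{\otimes d}\otimes M$. Unpacking the definitions, $x_i$ places a $\gl(V)=V\otimes V^*$ so its $V^*$ pairs with the $i$-th $V$, and the output $V$ replaces it; $\tau_i$ swaps factors $i$ and $i+1$. The difference $\tau_i x_i - x_{i+1}\tau_i$ then equals the evaluation-coevaluation loop $V\otimes V\to V\otimes V$ coming from applying $\ev$ and $\coev$ at adjacent factors, which in a rigid symmetric category equals the identity (this is the Jucys--Murphy computation lifted to a categorified setting). This identity is the real content of the lemma, and it is the one computation I expect to be the main obstacle — but it is exactly the calculation performed in \cite{EA}, so I would simply reproduce it via the standard string-diagram manipulation.

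Finally, once all dAHA relations are in hand, the decomposition $F=\bigoplus_c F_c$ into generalized eigenspaces of $x$ and the adjoint decomposition of $E$ follow formally, completing the verification.
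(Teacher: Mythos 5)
The paper's own ``proof'' of this lemma is a bare citation: \emph{See \cite{EA}, Section 6.2}. So there is nothing in the paper itself to compare your argument against; what you have done is sketch the verification that Entova-Aizenbud carries out in the cited section. Your plan is faithful to that source: the biadjunction $F\dashv E\dashv F$ follows formally from rigidity of $V$ in $\cc$; the braid relations, $\tau^2 = \mathrm{id}$, and the commutativity $[x_i,x_j]=0$, $\tau_i x_j = x_j\tau_i$ for $j\neq i,i+1$ are standard string-diagram identities; and you correctly isolate the mixed Jucys--Murphy relation $\tau_i x_i - x_{i+1}\tau_i = \pm 1$ as the nontrivial step, noting that it expresses the split tensor-product action of $\gl(V)$ on $V\otimes(-)$.

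One small imprecision worth tightening if you actually wrote this out: you describe the residual term as ``the evaluation--coevaluation loop $V\otimes V\to V\otimes V$\ldots which equals the identity.'' The cleaner statement is that when one pulls $\tau$ through $xF$, the $\gl(V)$-action on the inner $V$-factor contributes the ``split-Casimir'' endomorphism of $V\otimes V$, which in a rigid symmetric category is precisely the symmetry $\sigma_{V,V}$ (the categorical avatar of $\sum_{i,j}E_{ij}\otimes E_{ji}$); composing it with $\tau = \sigma_{V,V}\otimes\mathrm{id}$ gives the identity via $\sigma_{V,V}^2 = \mathrm{id}$. The zigzag axiom enters in unwinding the adjunction that identifies $x_M$ with the action map, but it is the symmetry, not a bare snake, that accounts for the extra term. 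Since you explicitly defer the diagram chase to \cite{EA}, this is a point of exposition rather than a real gap.
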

\begin{proof}
See \cite{EA}, Section 6.2. 
\end{proof}

Let us quickly review a few notions associated to $\ssl_\Z$ and its representations. Recall that $\ssl_\Z$ is generated by the elements $f_c= E_{c+1,c}$ and $e_c= E_{c,c+1}$, $c\in\Z$. We denote the tautological representation of $\ssl_\Z$ by $\C^\Z$. 

Let us introduce some notations. For a cell $(i,j)$ (that is, a square with vertices $(i,j), (i-1,j), \\ (i, j-1), (i-1,j-1)$) in the Young diagram of a partition $\l$ we denote its content by $ct(\square(i,j)) = j - i.$ More generally, if $\bl$ is a bipartition, we can define a Young diagram of $[\bl]_n = \sum \l_ie_i-\sum \bar\l_i e_{n-i+1} = \sum a_i e_i \in \h_n^*$ by adding cells with coordinates $(i, j)$ for $-\bar \l_1 + 1 \le j\le a_i$ (see Figure \ref{fig3}). Note that when $n\ge \ell(\l)+\ell(\bar \l)$ we can uniquely recover the bipartition $\bl$ from the Young diagram of $[\bl]_n$.

\begin{figure}[h]\caption{The diagram of a bipartition $(\l,\bar\l)$}\label{fig3}
    \begin{tikzpicture}
    
    \draw[pattern = crosshatch,  pattern color = blue!15!white!, draw = blue!7!white!] (-3,-4) rectangle (0,-5);
    \draw[pattern = crosshatch,  pattern color = blue!15!white!, draw = blue!10!white!] (-1,-3) rectangle (0,-4);
    \draw (0,-6)--(0,1);
    \draw (0,-6)--(-0.05, -5.94);
    \draw (0,-6)--(0.05, -5.94);
    \draw (-3.5,0)--(4,0);
    \draw (4,0)--(3.94,0.05);
    \draw (4,0)--(3.94,-0.05);
    \node at (0,-6.4){$j$};
    \node at (4.15,0){$i$};
    \draw[pattern = crosshatch,  pattern color = green!20!white!, draw = black] (0,0) rectangle (1,-1);
    \draw[pattern = crosshatch,  pattern color = green!20!white!, draw = black] (0,-1) rectangle (1,-2);
    \draw[pattern = crosshatch,  pattern color = green!20!white!, draw = black] (0,-2) rectangle (1,-3);
    \draw[pattern = crosshatch,  pattern color = green!20!white!, draw = black] (1,0) rectangle (2,-1);
    \draw[pattern = crosshatch,  pattern color = green!20!white!, draw = black] (2,0) rectangle (3,-1);
    \draw[pattern = crosshatch,  pattern color = green!20!white!, draw = black] (1,-1) rectangle (2,-2);
    
    \draw[decorate,decoration={brace,amplitude=10pt, mirror, raise=4pt}](-3.6, 0) -- (-3.6,-5);
    \draw[decorate,decoration={brace,amplitude=10pt, mirror, raise=4pt}](-3, -5) -- (0,-5);
    \node at (-4.35,-2.55){$n$};
    \node at (-1.45, -5.8){$\bar\l_1$};
    
    \draw (-3,0) rectangle (-2,-1);
    \draw (-2,0) rectangle (-1,-1);
    \draw (-1,0) rectangle (0,-1);
    \draw (-3,-1) rectangle (-2,-2);
    \draw (-2,-1) rectangle (-1,-2);
    \draw (-1,-1) rectangle (0,-2);
    \draw (-3,-2) rectangle (-2,-3);
    \draw (-2,-2) rectangle (-1,-3);
    \draw (-1,-2) rectangle (0,-3);
    \draw (-3,-3) rectangle (-2,-4);
    \draw (-2,-3) rectangle (-1,-4);
    
    \draw (5.8,-0.2) rectangle (6.6,-1);
    \node at (10,-0.6){$-$ cells added to the diagram of $(\l,\bar\l)$};
    \filldraw[fill= green!20!white!] (5.8,-1.2) rectangle (6.6,-2);
    \node at (9.15,-1.6){$-$ cells of the diagram of $\l$};
    \draw[pattern = crosshatch,  pattern color = blue!20!white!, draw = blue!20!white!] (5.8,-2.2) rectangle (6.6,-3);
    \node at (10,-2.6){$-$ the diagram of $\bar\l$ rotated $180$ degrees};
    \node at (9.75, -3.4){ around the point $(0,n/2)$.};
    
    \draw[dotted][draw = black!40!white] (-3,-5)--(0,-5);
    \draw[dotted][draw = black!40!white] (-3,-5)--(-3,-4);
    
    \end{tikzpicture}
\end{figure}

Let us denote by $\l+\square_c$ (resp. $\l-\square_c$) the set of (bi)partitions obtained by adding (resp. removing) a cell of content $c$ to (resp. from) the Young diagram of $\l$. This set consists of at most one (bi)partition. 

Let us now consider some representations of $\ssl_\Z$. First of all, let us look at the module $\C^\Z$ with a basis $v_c, c\in \Z$. We have
$$
f_c v_b = \delta_{c,b} v_{c+1},
$$
$$
e_c v_b = \delta_{c+1, b} v_c.
$$

Now, the module  $\Lambda^n \C^\Z$ has a basis $v_{i_1}\wedge \ldots\wedge v_{i_n}$ indexed by decreasing sequences $i_1>\ldots >i_n$. For any integral dominant weight $\nu$ of $\gl_n$ let us denote by $v_\nu$ the vector corresponding to the sequence $\nu_1>\nu_2-1>\ldots>\nu_n-n+1$. Thus, $\Lambda^n \C^\Z$ has a basis indexed by dominant integral weights of $\gl_n$, or, equivalently, by bipartitions $\bl$ with $n\ge \ell(\l)+\ell(\bar\l)$.  We get that
$$
f_c v_\nu  = v_{\nu+\square_c},
$$
$$
e_c v_\nu = v_{\nu-\square_c},
$$
where $v_{\nu\pm \square_c}=0$ whenever $\nu\pm\square_c$ is empty. 

We recall that the Fock module $\mathfrak F$ is the vector space with a basis consisting of infinite wedges $v_{i_0}\wedge v_{i_{-1}}\wedge v_{i_{-2}}\wedge \ldots$, where $i_0>i_{-1}>i_{-2}>\ldots$ and $i_{-s}=-s$ for $s\mathfrak{g} 0$. For any partition $\nu$ we define
$$
v_\nu = v_{\nu_1}\wedge v_{\nu_2-1} \wedge v_{\nu_3-2}\wedge\ldots
$$
Thus, $\mathfrak F$ has a basis indexed by partitions of arbitrary size. We have
$$
f_c v_\nu  = v_{\nu+\square_c},
$$
$$
e_c v_\nu = v_{\nu-\square_c}.
$$

Now, for any $\ssl_\Z$-module $M$ denote by $M^\vee$ its twist by the automorphism of $\ssl_\Z$ sending $f_c$ to $e_{c}$ and vice versa. Let $M^\tau$ denote the twist of $M$ by the automorphism of $\ssl_\Z$ sending $f_c$ to $f_{-c}$ and $e_c$ to $e_{-v}$. Note that $M^\vee, M^\tau$ have the same underlying vector spaces as $M$. When $M = \Lambda^n\C^\Z$ (resp. $\mathfrak F$) and $\nu $ is an integral dominant weight of $\gl_n$ (resp. a partition) we have for $v_\nu\in M^\vee$ that
$$
f_c v_\nu  = v_{\nu-\square_{c}},
$$
$$
e_c v_\nu = v_{\nu+\square_{c}}.
$$

The categorical type $A$ action induces an action of $\ssl_\Z$ on the complexified Grothendieck group whenever all eigenvalues of $x$ are integer. Let us consider a few examples.

\begin{ex}
All objects of $\cc_n$ enjoy a natural action of $\gl(V)=\gl_n$. So, we can consider the categorical type $A$ action on $\cc_n$ given by the tuple $(F, E, x, \tau)$ as in Lemma \ref{lcategactiononrepglv}.  We have for any integral dominant weight $\bl$
$$
F(V_\bl) = \bigoplus_{\bm\in \bl+\square} V_\bm.
$$
It was proved in \cite{EA} that if $\bm\in\bl+\square_{c}$, then $V_\bm$ is the eigenspace of $x_{V_\bl}$ with  eigenvalue $c$. Then we have
$$
F = \bigoplus_{c\in \Z} F_c
$$
with
$$
F_c(V_\bl) = \bigoplus_{\bm\in \bl+\square_c} V_\bm.
$$

This decomposition induces an action of $\ssl_\Z$ on the complexified Grothendieck group of $\cc_n$. By direct comparison of the formulas, it is easy to deduce that $\C\tens{\Z}Gr(\cc_n)$ is isomorphic to $\Lambda^n\C^\Z$ as an $\ssl_\Z$-module. 
\end{ex}

\begin{ex}
Instead of $\cc_n$, we can consider the category $\cc_t$, whose objects enjoy the natural action of $\gl(V) = \gl_t$. For any bipartition $\bl$ and large enough $n$ the set of bipartitions obtained from $[\bl]_n$ by adding/removing a cell doesn't depend on $n$. Moreover, the content of the added/removed cell depends linearly on $n$, therefore we can extend this procedure to bipartitions considered as weights of $\gl_t$. We get that adding a cell of content $c$ to a bipartition $\bl=(\l,\bar\l)$ is the same as either adding a cell of content $c$ to $\l$ (and leaving $\bar\l$ intact) or removing a cell of content $t - c$ from $\bar\l$ (and leaving $\l$ intact). We can split the operator $x$ into the sum of two other operators 
$$
x = x'+x'',
$$
where the eigenvalues of $x'$ are integer and the eigenvalues of $x''$ lie in $t+\Z$. The data $(E,F, x', \tau)$ and $(E,F,x''- t, \tau)$ define two categorical type $A$ actions on $\cc_t$.  As an $\ssl_\Z\oplus \ssl_\Z$-module $\C\tens{\Z}Gr(\cc_t)$ is isomorphic to $\mathfrak F\otimes (\mathfrak F^\tau)^\vee$ (see \cite{EA} Section 8).
\end{ex}

\subsection{Categorical type $A$ action on Harish-Chandra bimodules}

Lemma \ref{lcategactiononrepglv} allows us to define two categorical type $A$ actions on $\hc(\gl_t)$ by considering its objects as left (or right) $\gl_t$-modules. Since these actions are clearly symmetrical, we will stick to studying only the left action.

\begin{definition}
Let $\mathcal D_\bm$ be the Serre subcategory of $\hc(\gl_t)$ generated by $\underline \Hom(\bm, \bl)$ with $\bm$ fixed.
\end{definition}

 Recall that $\bl = (\l, \bar \l)$ and $\l, \bar\l$ denote the tuples $(\alpha, \beta, \gamma),  (\bar\alpha, \bar\beta, \bar\gamma)$, where $ \gamma, \bar\gamma\in \cp$,
\begin{align*}
    \alpha = \prod_\cf \alpha^\nn,\\
    \beta = \prod_\cf \beta^\nn,\\
    \bar\alpha = \prod_\cf \bar\alpha^\nn,\\
    \bar\beta = \prod_\cf \bar\beta^\nn,
\end{align*}
$\alpha^\nn,\beta^\nn,
\bar\alpha^\nn,\bar\beta^\nn$ are \textbf{nice} sequences of partitions, and we require that $\ell(\alpha^\nn), \ell(\beta^\nn), \ell(\bar \alpha^\nn), \ell(\bar\beta^\nn)$ are constant for almost all $n$.

We get that $\bl$ is in some sense the ultraproduct of bipartitions $\bl^\nn$. The conditions on these bipartitions are such that the set $\bl^\nn\pm \square$ has the same number of elements for almost all $n$. Moreover, we can list all the elements of this set. For any $\bm\in\bl^\nn\pm\square$ with
$$
\bm^\nn = ([\sigma^\nn, \tau^\nn, \epsilon],[\bar\sigma^\nn,\bar\tau^\nn,\bar\epsilon])
$$
we have (for almost all $n$) that either
\begin{align}
    \sigma^\nn_i = \alpha^\nn_i\pm 1,\text{ or}\label{eqfirstcond}\\
    \bar\sigma^\nn_i = \bar\alpha^\nn_i\mp 1,\text{ or}\label{eqsecondcond}\\
    \tau^\nn_i = \beta^\nn_i\pm 1, \text{ or}\label{eqthirdcond}\\
    \bar\tau^\nn_i = \bar \beta^\nn_i \mp 1, \text{ or}\label{eqfourthcond}\\
    \epsilon \in \gamma\pm\square, \text{ or}\label{eqfifthcond}\\
    \bar\epsilon\in \bar\gamma\mp\square,\label{eqlastcond}
\end{align}
and only one of the conditions holds for exactly one $i$. The change of sign in (\ref{eqsecondcond}), (\ref{eqfourthcond}) and (\ref{eqlastcond}) reflects the fact that removing a cell from $\bar\alpha, \bar\beta $ or $\bar\epsilon$ results in adding a cell to $\bl$.  Note that because our sequences of partitions are \textbf{nice}, we can add/remove a cell to/from any row of $\alpha^\nn, \beta^\nn, \bar\alpha^\nn,\bar\beta^\nn$  (for almost all $n$) and obtain a partition. The content of the added/removed cell depends linearly on $\alpha_i^\nn, \beta_i^\nn, \bar\alpha_i^\nn, \bar\beta_i^\nn$, and $n$, so we can extend this notion algebraically to $\Rep(GL_t)$ and tuples $\bl$. 

So, let $\bl\pm \square$ denote the set of tuples $\bm$ obtained from $\bl$ by adding/removing a cell. That is, $\bm=\prod_\cf \bm^\nn$, where each $\bm^\nn\in \bl^\nn\pm\square$ satisfies one and the same of the conditions \ref{eqfirstcond}-\ref{eqlastcond} with the same value $i$ for almost all $n$. We have 
\begin{align}\label{eqfofhom}
F(\underline\Hom(\bm, \bl)) = \bigoplus_{\bn\in\bl+\square} \underline\Hom(\bm, \bn).
\end{align}

Each $\underline\Hom(\bm, \bn)$ is the eigenspace of $x_{\underline\Hom(\bm, \bl)}$ with eigenvalue $c$ if $\bn\in \bl+\square_{c}$. 

\begin{lemma}\label{leigenv}
Let $\bn\in \bl+\square$. Then $\bn^\nn$ satisfies one of the conditions (\ref{eqfirstcond})--(\ref{eqlastcond}) for almost all $n$. Let $c(\bn)$ be the eigenvalue of $x_{\underline\Hom(\bm, \bl)}$ on $\underline\Hom(\bn, \bl)$. Then
$$
c(\bn) = \begin{cases}
\alpha_i+\ell(\beta)+1-i, \text{ if } \bn^\nn \text{ satisfies (\ref{eqfirstcond})} ,\\
-\bar \alpha_i-\ell(\bar\beta)-t+i, \text{ if } \bn^\nn \text{ satisfies (\ref{eqsecondcond})},\\
-\beta_i - \ell(\alpha)-1+i, \text{ if } \bn^\nn \text{ satisfies (\ref{eqthirdcond})},\\
\bar\beta_i+\ell(\bar\alpha)-t-i, \text{ if } \bn^\nn \text{ satisfies (\ref{eqfourthcond})},\\
c'+\ell(\beta)-\ell(\alpha), \text{ if } \bn^\nn \text{ satisfies (\ref{eqfifthcond}) and }\epsilon\in \gamma+\square_{c'},\\
-c'-\ell(\bar\beta)+\ell(\bar\alpha)-t, \text{ if } \bn^\nn \text{ satisfies \ref{eqlastcond}) and }\bar\epsilon\in \bar\gamma-\square_{c'}.
\end{cases}
$$
\end{lemma}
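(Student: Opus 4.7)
The plan is to reduce the eigenvalue computation to the classical $\gl_n$ situation by filtered ultraproducts. Since $\underline\Hom(\bm, \bl) = \prod_\cf^F \Hom_\ck(V_{\bm^\nn}, V_{\bl^\nn})$ and the data $(F, x, \tau)$ of the categorical type $A$ action (Lemma \ref{lcategactiononrepglv}) is built from tensor-natural pieces—the object $V$, the action map $\gl_t \otimes (-) \to (-)$, and the braiding on $V \otimes V$—which all descend via the ultraproduct equivalence of Theorem \ref{t1} to their counterparts on $\cc_n$, the decomposition (\ref{eqfofhom}) is the $\cf$-ultraproduct of the classical decompositions
$$
V^\nn \otimes \Hom_\ck(V_{\bm^\nn}, V_{\bl^\nn}) \;=\; \bigoplus_{\bn^\nn \in \bl^\nn + \square} \Hom_\ck(V_{\bm^\nn}, V_{\bn^\nn})
$$
as left $\gl_n$-modules. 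Consequently the eigenvalue of $x$ on $\underline\Hom(\bm, \bn)$ equals $\prod_\cf c^\nn(\bn^\nn)$, where $c^\nn(\bn^\nn)\in\ck$ is the classical eigenvalue on the summand indexed by $\bn^\nn$.

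Next I would invoke the classical content formula: for $\bn^\nn = \bl^\nn + e_m$ a neighbor of $\bl^\nn$ in the weight lattice of $\gl_n$, the scalar $c^\nn(\bn^\nn)$ equals $\bl^\nn_m + 1 - m$, i.e.\ the column-minus-row content of the cell that the Young diagram of $[\bl^\nn]_n$ gains when passing to $[\bn^\nn]_n$. This is the standard calculation of the canonical element of $\gl_n$ acting on a multiplicity space inside $V^\nn \otimes V^\nn_{\bl^\nn}$. With this in hand, each of the six cases becomes a direct substitution: one reads off $m$ and $\bl^\nn_m$ from the decomposition $\l = [\alpha,\beta,\gamma]$ and then takes $\prod_\cf$, using $\prod_\cf n = t$ in the three cases where $m = n+1-i$ depends on $n$. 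For example, in case (\ref{eqfirstcond}) we have $m = i$ and $\bl^\nn_m = \alpha^\nn_i + \ell(\beta)$, producing $\alpha_i + \ell(\beta) + 1 - i$; in case (\ref{eqsecondcond}) we have $m = n + 1 - i$ and $\bl^\nn_m = -\bar\alpha^\nn_i - \ell(\bar\beta)$, producing $-\bar\alpha_i - \ell(\bar\beta) - t + i$. Cases (\ref{eqthirdcond})--(\ref{eqfourthcond}) use the transpose identities $\l_j{}' = \beta_j + \ell(\alpha)$ and $\bar\l_j{}' = \bar\beta_j + \ell(\bar\alpha)$ to identify the row $m$ at which the weight changes, while cases (\ref{eqfifthcond})--(\ref{eqlastcond}) require expressing the content of the altered cell inside $\gamma$ (resp.\ $\bar\gamma$) standalone as $c'$ and then adding the shift $\ell(\beta) - \ell(\alpha)$ (resp.\ $\ell(\bar\alpha) - \ell(\bar\beta) - t$) that accounts for the $k$-by-$l$ rectangle and, in the $\bar\gamma$ case, the offset by $n$.

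The main obstacle is the compatibility in the first step: one must check that $(F, x, \tau)$ on $\hc(\gl_t)$ applied to $\underline\Hom(\bm, \bl)$ coincides with the filtered ultraproduct of the classical $(F, x, \tau)$ on $\cc_n$ applied to $\Hom_\ck(V_{\bm^\nn}, V_{\bl^\nn})$. This amounts to unpacking that tensoring with $V = \prod_\cf V^\nn$ commutes with filtered ultraproducts at each filtration level, and that the $\gl_t$-action map and the braiding on $V\otimes V$ are likewise compatible with the ultraproduct structure. Given the ultraproduct constructions of $U(\g_t)$ and the tensor structure on $\cc_t$ provided by Theorem \ref{t1}, this is essentially a direct verification. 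Once granted, the classical content formula combined with the case-by-case substitution above yields the lemma.
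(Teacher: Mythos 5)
Your proposal is correct and follows essentially the same route as the paper: the paper's proof is a one-liner stating that the eigenvalue is the content of the added cell classically (citing \cite{EA}) and one takes the ultraproduct. You unfold this: you invoke the classical content formula (eigenvalue $= [\bl^\nn]_m + 1 - m$ for the box added in row $m$, i.e.\ the standard column-minus-row content of the box — note the paper's stated convention $ct(\square(i,j))=i-j$ appears to be a typo, since its own eigenvalue formulas are consistent only with $j-i$), you identify the row $m$ and the weight entry $[\bl^\nn]_m$ in each of the six cases from the decomposition $\l = [\alpha,\beta,\gamma]$ (using $m = n+1-i$ for the $\bar\l$ cases so that $\prod_\cf n = t$ enters), and you take the ultraproduct. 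The case-by-case substitutions you sketch are all correct. The compatibility you flag — that the type $A$ data $(F,E,x,\tau)$ on $\hc(\gl_t)$ is the filtered ultraproduct of the classical data on each $\hc(\gl_n)$ — is indeed the implicit hinge, and your treatment of it as a verification along the lines of Section 2's machinery is reasonable; the paper takes it entirely for granted.
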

\begin{proof}
It follows from the calculation of the content of the added cell for $\bn^\nn$ and then taking the ultraproduct of the obtained values.
\end{proof}

\begin{definition}
We say $\bn\in\bl\pm\square_c$ if $\bn\in\bl\pm\square$ and $c(\bn)=c$.
\end{definition}

Let us, following Lemma \ref{rextdefinitionofhom}, require that  $\rm{trdeg}_{\mathbb Q}\mathbb Q(\alpha_{i_1},\beta_{j_1},\bar\alpha_{i_2},\bar\beta_{j_2}, t)_{i_1\le \ell(\alpha), i_2\le \ell(\bar\alpha), j_1\le\ell(\beta), j_2\le\ell(\bar\beta)} = \ell(\alpha)+\ell(\beta)+\ell(\bar\alpha)+\ell(\bar\beta)+1$. Then the numbers $$\{\alpha_{i_1}, -\bar\alpha_{i_2}-t, -\beta_{j_1}, \bar\beta_{j_2}-t, \ell(\beta)-\ell(\alpha), -\ell(\bar\beta)+\ell(\bar\alpha)-t\}_{i_1\le \ell(\alpha), i_2\le \ell(\bar\alpha), j_1\le\ell(\beta), j_2\le\ell(\bar\beta)}$$ lie in separate $\Z$-cosets of $\C$. For any coset $r+\Z\in \C/\Z$ we can define the operator $x^{(r)}$, such that 
$$
x = \sum_{r+\Z\in\C/\Z} x^{(r)}
$$
and if $N$ is an eigenspace of $x_M$ with eigenvalue $c$, then 
$$
x_M^{(r)}|_N = \begin{cases} c, \text{ if } c \in r+\Z,\\ 0, \text{ otherwise.} \end{cases}
$$

Equation \ref{eqfofhom} implies that the categorical type $A$ action given by $(F,E,x,\tau)$ restricts to the category $\mathcal D_\bm$.  Moreover, the tuples $(F,E,x^{(r)}-r, \tau)$ for cosets $r+\Z$ and some coset representatives $r\in\C$ induce  well-defined commuting categorical $\ssl_\Z$-actions. 

Let $\bm=(\mu,\bar\mu)$ with
\begin{align*}
    \mu=(\alpha,\beta,\gamma),\\
    \bar\mu=(\bar\alpha,\bar\beta,\bar\gamma).
\end{align*}
Then for any $\bl$, such that $\underline\Hom(\bm, \bl)\in \mathcal D_\bm$ we have that $\bl=(\l,\bar\l)$ and
\begin{align*}
    \l=(\alpha+\mathbf a, \beta+\mathbf b, \delta),\\
    \bar\l = (\bar\alpha+\bar{\mathbf a}, \bar\beta+\bar{\mathbf b}, \bar\delta)
\end{align*}
for some integer sequences $\mathbf a, \mathbf b, \bar{\mathbf a}, \bar{\mathbf b}$ and some $\delta, \bar\delta\in\cp$.

The calculation in Lemma \ref{leigenv} shows that $(F,E,x^{(r)}-r, \tau)$ induces a trivial $\ssl_\Z$-action unless $r$ satisfies one of the following conditions for some $i$:
\begin{align}
    r=\alpha_i + \ell(\beta) - i, \text{ or}\\
    r =-\bar \alpha_i-t-\ell(\bar\beta) + i, \text{ or}\\
    r=-\beta_i-\ell(\alpha)+i, \text{ or}\\
    r=\bar\beta_i-t+\ell(\bar\alpha)-i, \text{ or}\\
    r = \ell(\beta)-\ell(\alpha), \text{ or}\\
    r= -\ell(\bar\beta)+\ell(\bar\alpha)-t.
\end{align}

In all of the above cases the eigenvalues of $x^{(r)}-r$ are integers and the decomposition
$$
F=\bigoplus_{i\in \Z} F_i
$$
gives the $\ssl_\Z$-action on the complexified Grothendieck group $\C\tens{\Z}Gr(\mathcal D_\bm)$. Let us denote by $\ssl_\Z^{(r)}$ the copy of $\ssl_\Z$, which acts on $\C\tens{\Z}Gr(\mathcal D_\bm)$ via $(F,E,x^{(r)}-r, \tau)$.

\begin{theorem} \label{tgrmod1}
 Let $S_1 = \{\alpha_i + \ell(\beta) - i\}, S_2=\{-\bar \alpha_i~-~t~-~\ell(\bar\beta)~+~i\}, S_3 = \{-\beta_i-\ell(\alpha)+i\}, S_4 = \{\bar\beta_i-t+\ell(\bar\alpha)-i\}$,  i.e. $S_k$ is the set of values of $r$ satisfying one of the conditions (7.8.$k$).\\ Then as a $\left(\bigoplus_{r\in S_1\cup S_2\cup S_3\cup S_4}\ssl_\Z^{(r)}\right)\oplus \ssl_\Z^{(\ell(\beta)-\ell(\alpha))}\oplus \ssl_\Z^{(-\ell(\bar\beta)+\ell(\bar\alpha)-t)}$-module 
 $$
 \C\tens{\Z}Gr(\mathcal D_\bm) \simeq \left(\bigotimes_{r\in S_1} \C^\Z\otimes \bigotimes_{r\in S_2} ((\C^\Z)^\tau)^\vee\otimes \bigotimes_{r\in S_3} (\C^\Z)^\tau\otimes \bigotimes_{r\in S_4} (\C^\Z)^\vee\right)\otimes  \mathfrak F\otimes (\mathfrak F^\tau)^\vee.
 $$

\end{theorem}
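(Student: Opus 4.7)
The plan is to produce the isomorphism by writing down an explicit identification of the two underlying vector spaces on bases and then verifying, one coset at a time, that each $\ssl_\Z^{(r)}$ acts as the claimed twist of $\C^\Z$ or $\mathfrak F$.

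First I would record the basis. Since the bimodules $\underline\Hom(\bm,\bl)$ are simple (Remark 3.14) and, by definition, generate $\mathcal D_\bm$ as a Serre subcategory, the Grothendieck group $\C\tens{\Z}Gr(\mathcal D_\bm)$ has a basis indexed by those $\bl$ with $\underline\Hom(\bm,\bl)\in\mathcal D_\bm$. With $\bm$ fixed with components $\alpha,\beta,\gamma,\bar\alpha,\bar\beta,\bar\gamma$, such $\bl$ are precisely the tuples parametrized by $(\mathbf a,\bar{\mathbf a},\mathbf b,\bar{\mathbf b},\delta,\bar\delta)\in \Z^{\ell(\alpha)}\times\Z^{\ell(\bar\alpha)}\times\Z^{\ell(\beta)}\times\Z^{\ell(\bar\beta)}\times\cp\times\cp$, which matches the basis of the proposed tensor product. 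The identification I would use sends $\underline\Hom(\bm,\bl)$ to $\bigotimes_i v_{a_i+1}\otimes\bigotimes_i v_{\bar a_i+1}\otimes\bigotimes_j v_{b_j+1}\otimes\bigotimes_j v_{\bar b_j+1}\otimes v_\delta\otimes v_{\bar\delta}$, the factors living in the six specified modules.

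Next I would argue that each $\ssl_\Z^{(r)}$ acts on exactly one tensor factor. From the decomposition (\ref{eqfofhom}) and Lemma 7.1, adding a cell to $\bl$ modifies exactly one of $\mathbf a,\bar{\mathbf a},\mathbf b,\bar{\mathbf b},\delta,\bar\delta$ by one unit, and the content of the added cell lies in one of six cosets of $\Z$ in $\C$. The algebraic independence of $\{\alpha_i,\beta_i,\bar\alpha_i,\bar\beta_i,t,1\}$ ensures that the cosets indexed by $S_1\sqcup S_2\sqcup S_3\sqcup S_4$ together with $\ell(\beta)-\ell(\alpha)+\Z$ and $-\ell(\bar\beta)+\ell(\bar\alpha)-t+\Z$ are pairwise distinct in $\C/\Z$. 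Hence for each fixed $r$, the operators $F_c$ with $c\in r+\Z$ modify only the component attached to that coset, so $\ssl_\Z^{(r)}$ factors through the corresponding tensor factor.

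The heart of the argument is identifying each factor. I would compare, on the chosen basis, whether $F$ raises or lowers the discrete parameter and whether the content is an increasing or decreasing function of that parameter. The components $\bar{\mathbf a},\bar{\mathbf b},\bar\delta$ yield a $\vee$-twist, because an $F$ on $\bl$ corresponds to removing a cell from the bar-component and thus lowers the parameter, mirroring $f_c v_\nu=v_{\nu-\square_c}$ in $(\C^\Z)^\vee$ or $(\mathfrak F)^\vee$. The components $\mathbf b,\bar{\mathbf a}$ yield a $\tau$-twist, because the content formulas in Lemma 7.1 for these cases carry a minus sign in front of the varying parameter; this is exactly what the automorphism $f_c\mapsto f_{-c}$ compensates for. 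Combining the two observations case by case produces the factors $\C^\Z,\,((\C^\Z)^\tau)^\vee,\,(\C^\Z)^\tau,\,(\C^\Z)^\vee,\,\mathfrak F,\,(\mathfrak F^\tau)^\vee$ exactly as in the statement; on each factor the $\ssl_\Z$-relations then hold because adding/removing a cell in a single row realizes the defining action on $\C^\Z$ (or on $\mathfrak F$ for the free partitions $\delta,\bar\delta$) tautologically.

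The main obstacle is this last bookkeeping: six combinatorial cases have to be matched with four variants of $\C^\Z$ and two variants of $\mathfrak F$, and a consistent shift of the basis index (by $+1$ in my convention) in each $\C^\Z$-type factor is needed to make the categorical $F$-operator agree with the generator $f_c$ on the nose. Once the sign conventions are settled this is a routine but careful check, and the decomposition of the theorem follows.
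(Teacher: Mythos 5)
Your proposal is correct and follows essentially the same route as the paper's proof: both identify $\C\tens{\Z}Gr(\mathcal D_\bm)$ with the right-hand side by sending $[\underline\Hom(\bm,\bl)]$ to the pure tensor indexed by $(\mathbf a,\bar{\mathbf a},\mathbf b,\bar{\mathbf b},\delta,\bar\delta)$ with the same $+1$ index shift in the $\C^\Z$-type factors (the paper writes this as $a_j = i_r - 1$). Your extra case-by-case explanation of where the $\tau$- and $\vee$-twists come from (bar-components give $\vee$ because adding a cell to $\bl$ removes one from the bar part; $\beta$- and $\bar\alpha$-components give $\tau$ because their content formulas in Lemma \ref{leigenv} have a minus sign in front of the varying integer) is exactly the verification the paper leaves implicit after stating the map.
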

\begin{proof}
The tensor product on the right hand side has a basis $$\bigotimes_{r\in S_1\cup S_2\cup S_3\cup S_4} v_{i_r}^{(r)}\otimes v_\delta^{(\ell(\beta)-\ell(\alpha))}\otimes v_{\bar\delta}^{(-\ell(\bar\beta)+\ell(\bar\alpha)-t)}.$$ The isomorphism takes this basis element to the class of $\underline\Hom(\bm, \bl)$ with $\bl=(\l,\bar\l)$,
\begin{align*}
    \l=(\alpha+\mathbf a, \beta+\mathbf b, \delta),\\
    \bar\l = (\bar\alpha+\bar{\mathbf a}, \bar\beta+\bar{\mathbf b}, \bar\delta),\\
    \mathbf a = (a_1,\ldots, a_{\ell(\alpha)}), \mathbf b = (b_1,\ldots, b_{\ell(\beta)}),\\
    \bar{\mathbf a} = (\bar a_1,\ldots,\bar a_{\ell(\bar\alpha)}), \bar{\mathbf b} = (\bar b_1, \ldots, \bar b_{\ell(\bar \beta)}).
\end{align*}
such that 
\begin{align*}
    a_j = i_{\alpha_i+\ell(\beta)-j}-1,\\
    \bar a_j = i_{-\bar\alpha_i-t-\ell(\bar \beta)+j}-1,\\
    b_j = i_{-\beta_i - \ell(\alpha)  + j}-1,\\
    \bar b_j = i_{\bar\beta_i - t+\ell(\bar\alpha)-j}-1.
\end{align*}
\end{proof}

\subsection{A more general construction}

Recall that Remark \ref{rdiag} implied that there is a wider range of Harish-Chandra bimodules of finite K-type than was constructed in Section \ref{s3}. That is, we could take an ultraproduct of bimodules $\Hom_\ck(V_{\bm^\nn},V_{\bl^\nn})$ with partitions $\l^\nn,\overline \l^\nn,\mu^\nn,\bar\mu^\nn$ having a bounded diagonal length. This construction is more general but requires a slightly bulkier combinatorial data. We will now describe this construction in more detail.

Suppose we have a sequence of partitions $\l^\nn$ with bounded length of the diagonal. Then for almost all $n$ we have $d(\l^\nn)=d$ for some fixed $d$. We define $k$ to be the maximal index $i$ for which the sequence $\l_i^\nn$ is not bounded for almost all $n$. Similarly, $l$ is defined as the maximal $j$ for which the sequence $(\l\nn)'_j$ is not bounded for almost all $n$. Recall that to  $\l^\nn = [\alpha^\nn, \beta^\nn, \gamma]$, where $\ell(\alpha^\nn) = k, \ell(\beta^\nn) =l$ for almost all $n$, we assigned the data of \textbf{nice} sequences of partitions $A^\nn = (A_1^\nn, \ldots,  A_p^\nn), B^\nn = (B_1^\nn, \ldots B_q^\nn)$ of some lengths $p\le k, q\le l$, a $p$-tuple $(\alpha[1],\ldots,\alpha[p])$ and a $q$-tuple $(\beta[1],\ldots, \beta[q])$ of non-increasing integer sequences with $\alpha[j]\in \mathbb Z^{k_j}, \beta[m]\in \mathbb Z^{l_m}$, where
$$
\sum_{j=1}^{p} k_j = k, ~\sum_{m=1}^q l_m = l.
$$

 Thus, $\l = \prod_\cf \l^\nn$ results in the following combinatorial data: 
 \begin{itemize}
     \item $A=(A_1,\ldots, A_p) \in \mathbb C^p$ with $A = \prod_\cf A^\nn$, where $A^\nn$ is a \textbf{nice} sequence of partitions;
     \item $B = (B_1, \ldots B_q)\in\mathbb C^q$ with $B = \prod_\cf B^\nn$, where $B^\nn$ is a \textbf{nice} sequence of partitions;
     \item a $p$-tuple of non-increasing integer sequences $(\alpha[1], \ldots, \alpha[p])$ with $\alpha[i]\in \mathbb Z^{k_i}$, where $\sum_{i=1}^p k_i = k$;
     \item a $q$-tuple of non-increasing integer sequences $\beta[1], \ldots, \beta[q]$ with $\beta[i]\in\mathbb Z^{l_i}$, where $\sum_{i=1}^q l_i = l$;
     \item a partition $\gamma$.
 \end{itemize}

We then have 
\begin{itemize}
    \item $\alpha_i = A_j + \alpha[j]_m$, if  $i = k_1+\ldots+k_{j-1}+m$ with $1\le m\le k_j$
    \item $\beta_i =  B_j + \beta[j]_m$, if  $i = l_1+\ldots + l_{j-1} + m$ with $m\le l_j$,
    \item $\l^\nn = [\alpha^\nn, \beta^\nn, \gamma]$.
\end{itemize}

Let us denote by $\cp_k'$ the set of non-increasing integer sequences of length $k$. Let $\cp'= \bigcup_k \cp'_k$. For $\alpha\in \cp'$ we say $\ell(\alpha) = k$ if $\alpha\in \cp'_k$.
   
\begin{remark}
In light of Lemma \ref{rextdefinitionofhom}, we can start with the data of $\l = (A,B, (\alpha[1],\ldots,\alpha[p]),(\beta[1],\ldots,\beta[q]),\gamma)$ with $A=(A_1,\ldots, A_p)\in\mathbb C^p, B=(B_1,\ldots, B_q)\in \mathbb C^q$, and $\alpha[i], \beta[j]\in \cp'$. If we now assume that $\rm{trdeg}_{\mathbb Q}\mathbb Q(A_1,\ldots, A_p, B_1, \ldots, B_q, t) = p+q+1$, then we can find a sequence partitions $\l^\nn$ for which $\l = \prod_\cf \l^\nn$ in an appropriate sense.
\end{remark}

Define $\mathbf A = (A_1,\alpha[1],\ldots, A_p,\alpha[p])\in (\C\times \cp')^p$ and put $\mathbf A^1 = (A_1,\ldots , A_p)\in \C^p$ and $\mathbf A^2 = (\alpha[1],\ldots, \alpha[p])\in(\cp')^p$.

When $k_i= l_i =1$ for all $i$ and $\mathbf A^2 = \mathbf B^2 = 0$,  the datum of $\l = (\mathbf A, \mathbf B, \gamma)$ is the same as in Lemma \ref{rextdefinitionofhom}. That is, $\l = (\alpha, \beta, \gamma)$ with $\alpha = \mathbf A^1, \beta = \mathbf B^1$.

So, let $\mu$ be the tuple $(\mathbf A,  \mathbf B, \gamma)$, $\bar\mu = (\bar{\mathbf A}, \bar{\mathbf B}, \bar \gamma)$ and $\bm=(\mu,\bar \mu)$. Let 
$$
\l=(\mathbf A+\mathbbm a, \mathbf B+\mathbbm b, \delta),
$$
$$
\bar\l = (\bar{\mathbf A}+\bar{\mathbbm a},\bar{\mathbf B}+\bar{\mathbbm b}, \bar \delta)
$$
with $\mathbbm a = (\mathbbm a[1], \ldots, \mathbbm a[p])\in (\cp')^p$ and $\ell(\mathbbm a[i]) = \ell(\alpha[i])$ and similar conditions on $\bar{\mathbbm a}, \mathbbm b, \bar{\mathbbm b}$, where the notation $\mathbf A+\mathbbm a$ means that 
$$
(\mathbf A+\mathbbm a)^1=\mathbf A^1
$$
and 
$$
(\mathbf A+\mathbbm a)^2 = \mathbf A^2 + \mathbbm a = (\alpha[1]+\mathbbm a_1,\ldots, \alpha[p]+\mathbbm a_p).
$$
Then we can define the bimodules $\underline\Hom(\bm, \bl) = \prod_\cf^F \Hom_\ck (V_{\bm^\nn}, V_{\bl^\nn})$. This is just a slight generalization of the construction in Section \ref{s3}. 

Given $\l=(\mathbf A, \mathbf B, \gamma), \bar \l = (\bar{\mathbf A}, \bar{\mathbf B}, \delta)$, $\bl = (\l, \bar \l)$ as above, we can still define the sets $\bl\pm\square_c$ (similarly to what was done in the previous part for $\l=(\alpha, \beta, \gamma)$). Let  $\bl\pm\square$ denote the set of all bipartitions obtained from $\bl$ by adding or removing a cell from its Young diagram. Because  $\mathbf A, \bar{\mathbf A}, \mathbf B, \bar{\mathbf B}$ are defined as ultraproducts of \textbf{nice} sequences of partitions, for almost all $n$ the number of bipartitions in $\bl^\nn\pm \square$ is constant and depends only on  $\alpha[i], \beta[j], \gamma, \bar\alpha[i], \bar\beta[j], \delta$, that is, we obtain them by adding/removing cells from the Young diagrams of (bi)partitions $\alpha[i], \beta[j], \gamma, \bar\alpha[i], \bar\beta[j], \delta$. We construct the elements of $\bl+\square$ by taking the ultraproduct of bipartitions in $\bl^\nn+\square$ obtained by applying the same operation of the form ``add a cell to $\theta$" for some $\theta\in\{\alpha[i],\beta[j],\gamma\}$ or ``remove a cell from $\theta$" for some $\theta\in\{\bar \alpha[i],\bar\beta[j],\delta\}$. Similarly, elements of $\bl-\square$ are ultraproducts of bipartitions in $\bl^\nn-\square$ obtained by applying either the operation of the form  ``remove a cell from $\theta$" for some $\theta\in\{\alpha[i],\beta[j],\gamma\}$ or ``add a cell to $\theta$" for some $\theta\in\{\bar \alpha[i],\bar\beta[j],\delta\}$.
We obtain the possible values of $c$ by taking the ultraproduct of contents of added/removed cells in the Young diagrams of $\bl^\nn$.

\begin{lemma}\label{lpossiblecontents}
The sets $\bl\pm\square_c$ can be non-empty only if one of the following holds for some $j$:

\begin{itemize}
    \item $c = A_j+l-k_1-\ldots-k_{j-1}+m$ for some $m\in\mathbb Z$; 
    
    In this case $\bl\pm \square_c$ is non-empty if and only if $\alpha[j]\pm\square_{m}$ is. 
    
    \item $c=-B_j-k+l_1+\ldots+l_{j-1}+m$ for some  $m\in\mathbb Z$;
    
    In this case $\bl\pm\square_c$ is non-empty if and only if $\beta[j]\pm \square_{-m}$ is.

    \item $c = k-l+m$ for some $m\in \Z$;
    
    In this case $\bl\pm \square_c$ is non-empty if and only if $\gamma\pm\square_{m}$ is.
    
    \item $c = - \bar A_j-t-\bar L+\bar k_1+\ldots \bar k_{j-1}+m$ for some $m\in\Z$;
    
    In this case $\bl\pm \square_c$ is non-empty if and only if $\bar\alpha[j]\mp \square_{-m}$ is .
    
    \item $c=\bar B_j-t+\bar k -\bar l_1-\ldots-\bar l_{j-1} +m$ for some  $m\in\mathbb Z$;
    
    In this case $\bl\pm\square_c$ is non-empty if and only if $\bar \beta[j]\mp \square_{m}$ is.
    
    \item $c = -t+\bar k-\bar l+m$ for some $m\in\Z$.
    
    In this case  $\bl\pm \square_c$ is non-empty if and only if $\delta\mp\square_{-m}$ is.
    
\end{itemize}
\end{lemma}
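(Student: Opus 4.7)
The plan is to reduce Lemma \ref{lpossiblecontents} to Lemma \ref{leigenv} by using the block decomposition $\alpha^\nn_i = A^\nn_j + \alpha[j]_m$ for $i = k_1 + \cdots + k_{j-1} + m$ with $1 \le m \le k_j$, together with the analogous identities for $\beta^\nn_i, \bar\alpha^\nn_i, \bar\beta^\nn_i$. Lemma \ref{leigenv} already gives, for each $\bn \in \bl^\nn \pm \square$, the content of the added or removed cell as an explicit formula in the entries of $\alpha^\nn, \beta^\nn, \bar\alpha^\nn, \bar\beta^\nn$ together with $\gamma$ and $\bar\gamma$; what remains is to refine these formulas using the block decomposition and then take the ultraproduct.

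First, I would verify the combinatorial bijection: for $n \gg 0$, the cells that can be added to (or removed from) $\bl^\nn$ are in natural bijection with single-cell modifications of exactly one of the sub-partitions $\alpha[j], \beta[j], \gamma, \bar\alpha[j], \bar\beta[j], \delta$. The key input is the hypothesis $\lim_n(A^\nn_{j-1} - A^\nn_j) = \infty$ and its analogues for $B, \bar A, \bar B$, which guarantees that eventually no single-cell modification of $\alpha^\nn$ straddles the boundary between blocks $\alpha[j-1]$ and $\alpha[j]$, and that the partition condition at block boundaries is automatic. The same observation rules out interference on the bar side.

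Next, I would substitute into each of the six formulas of Lemma \ref{leigenv}. For example, in the $\alpha[j]$ case, writing $i = k_1 + \cdots + k_{j-1} + m$, one has
\[
\alpha^\nn_i + \ell(\beta^\nn) + 1 - i \;=\; A^\nn_j + L - (k_1 + \cdots + k_{j-1}) + (\alpha[j]_m + 1 - m),
\]
where the last parenthetical is precisely the content of a cell added to row $m$ of $\alpha[j]$; taking the ultraproduct in $n$ replaces $A^\nn_j$ by $A_j$ and yields the claimed form $c = A_j + L - (k_1 + \cdots + k_{j-1}) + m$, with $m$ ranging over the contents of admissible single-cell modifications of $\alpha[j]$. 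The cases of $\beta[j], \bar\alpha[j], \bar\beta[j]$ proceed identically; the bar cases pick up the extra shift by $-t$ already present in Claim \ref{cl1}, and the sign in the $\beta$ cases arises from the transposition convention (since $\beta$ stores column rather than row lengths). The $\gamma$ and $\delta$ cases are direct specializations of Lemma \ref{leigenv} after the substitutions $k = K$, $l = L$, $\bar k = \bar K$, $\bar l = \bar L$.

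The main obstacle is really just bookkeeping: keeping the content convention consistent, and correctly tracking the signs produced by the transposition of the $\beta$-components and the reflection of the bar-components. There is no substantive representation-theoretic step beyond Lemma \ref{leigenv}; the content of Lemma \ref{lpossiblecontents} is the observation that the content formula is affine in the row-index $i$, so that the block decomposition passes transparently through it and produces exactly the six stated families of contents, one for each sub-partition of $\bl$.
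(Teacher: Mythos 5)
The paper states Lemma \ref{lpossiblecontents} without a formal proof; the preceding paragraph only sketches the idea (that, by the growth condition on $A^\nn_j, B^\nn_j, \bar A^\nn_j, \bar B^\nn_j$, the single-cell modifications of $\bl^\nn$ stay inside one block $\alpha[j],\beta[j],\gamma,\bar\alpha[j],\bar\beta[j],\delta$ for $n\gg 0$, and that one then takes the ultraproduct of the cell contents). Your proposal fleshes out exactly this argument: reduce to the explicit eigenvalue formulas of Lemma \ref{leigenv}, verify that the block decomposition is respected by admissible single-cell modifications once $n$ is large (the only issue is at block boundaries, which the divergence hypothesis handles), substitute $\alpha^\nn_i = A^\nn_j + \alpha[j]_m$ etc.\ into each of the six cases, and pass to the ultraproduct. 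The sample computation you carry out for the $\alpha[j]$ case is correct, and the remaining cases follow the same template; your acknowledgement that the $\beta$-side signs come from the transposition is the right bookkeeping remark. One tiny misattribution: the shift by $-t$ in the barred cases is not a consequence of Claim \ref{cl1} (which concerns central characters); it is already built into the formulas of Lemma \ref{leigenv}, which in turn trace back to the content of a box in the Young diagram of $[\bl]_n$ wrapping to the bottom as $n\to\infty$. This does not affect the validity of your argument, since you substitute into Lemma \ref{leigenv} directly. In short, your proof is correct and coincides with what the paper implicitly intends.
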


 Let us fix some $\bm=(\mu,\bar\mu)$ with
\begin{align*}
    \mu= (\mathbf A, \mathbf B, \gamma),\\
    \bar\mu = (\bar{\mathbf A},\bar{\mathbf B}, \bar\gamma).
\end{align*}
Define $\mathcal D_\bm$ to be the Serre subcategory of the category of Harish-Chandra bimodules generated by $\underline \Hom(\bm,\bl)$, where $\bl = (\l,\bar\l)$ runs over all tuples
\begin{align}
    \l = (\mathbf A+\mathbbm a, \mathbf B + \mathbbm b, \delta),\\
    \bar\l = (\bar{\mathbf A}+\bar{\mathbbm a},\bar{\mathbf B}+\bar{\mathbbm b}, \bar\delta)
\end{align}
for some $\delta, \bar\delta, \mathbbm a, \mathbbm b, \bar{\mathbbm a}, \bar{\mathbbm b}$. 

As in the previous part we put
$$
x = \sum_{r+\Z\in \C/\Z} x^{(r)}.
$$

Then the previous calculation shows that the action of $\ssl_\Z^{(r)}$ on $\C\tens{\Z}Gr(\mathcal D_\bm)$ is trivial unless for some $j$ one of the following conditions holds:
\begin{align}
    r = A_j+l-k_1-\ldots-k_{j-1}, \text{ or}\\
    r = -B_j - k+l_1+\ldots+l_{j-1}, \text{ or}\\
    r = k-l, \text{ or}\\
    r = -\bar A_j -t -\bar l + \bar k_1+\ldots+\bar k_{j-1}, \text{ or}\\
    r = \bar B_j - t + \bar k - \bar l_1-\ldots-\bar l_{j-1}, \text{ or}\\
    r = -t + \bar k - \bar l.
\end{align}

\begin{theorem} \label{tgrmod2}
 Let
 \begin{align*}
 S_1 = \{A_j + l - k_1 - \ldots - k_{j-1}\}_{j=1}^p,\\
 S_2=\{-B_j-k+l_1+\ldots+l_{j-1}\}_{j=1}^q,\\
 S_3 = \{-\bar A_j-t-\bar l+\bar k_1+\ldots \bar k_{j-1}\}_{j=1}^{\bar p},\\
 S_4 = \{\bar B_j - t+\bar k - \bar l_1 - \ldots  - \bar l_{j-1}\}_{j=1}^{\bar q}.
 \end{align*}
 Then as a $\left(\bigoplus_{r\in S_1}\ssl_\Z^{(r)}\right)\oplus \left( \bigoplus_{r\in S_2}\ssl_\Z^{(r)}\right)\oplus\left(\bigoplus_{r\in S_3}\ssl_\Z^{(r)}\right)\oplus\left(\bigoplus_{r\in S_4}\ssl_\Z^{(r)}\right)\oplus \ssl_\Z^{(K-L)}\oplus \ssl_\Z^{(-t+\bar K - \bar L)}$-module 
 $$
 \C\tens{\Z}Gr(\mathcal D_\bm) \simeq \left( \bigotimes_{j=1}^p \Lambda^{k_j}\C^\Z\right)\otimes \left( \bigotimes_{j=1}^q (\Lambda^{l_j}\C^\Z)^\tau\right)\otimes\left( \bigotimes_{j=1}^{\bar p} ((\Lambda^{\bar k_j}\C^\Z)^\tau)^\vee\right)\otimes \left( \bigotimes_{j =1}^{\bar q} (\Lambda^{\bar l_j}\C^\Z)^\vee\right)\otimes  \mathfrak F\otimes (\mathfrak F^\tau)^\vee.
 $$

\end{theorem}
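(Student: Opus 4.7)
The plan is to construct an explicit isomorphism by bijecting bases and verifying that each $\ssl_\Z^{(r)}$ action is intertwined, following the same strategy as the analogous theorem earlier in this section but with length-$k_j$ tuples $a[j]$ in place of single integers. First identify a basis of $\C \tens{\Z} Gr(\mathcal D_\bm)$: since each $\underline\Hom(\bm, \bl)$ is simple (by the Remark after Hypothesis~\ref{h}) and $\mathcal D_\bm$ is by definition the Serre subcategory they generate, their classes form a $\C$-basis. After fixing a normalization for the representatives $\alpha[j], \beta[j], \bar\alpha[j], \bar\beta[j]$ (e.g., last entry equal to $0$), the admissible $\bl$ are parametrized by tuples of non-increasing integer sequences $a[j] \in \cp'_{k_j}$, $b[j] \in \cp'_{l_j}$, their barred analogues, and partitions $\delta, \bar\delta \in \cp$. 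The shifted tuple $\alpha[j] + a[j]$ is a non-increasing integer sequence of length $k_j$, which is exactly the datum indexing a basis vector of $\Lambda^{k_j} \C^\Z$ (via the usual identification with dominant integral weights of $\gl_{k_j}$); analogously for the other exterior factors, while $\delta, \bar\delta$ index basis vectors of $\mathfrak F$ and $(\mathfrak F^\tau)^\vee$.

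Next, I would define the isomorphism on bases by this rule and verify the $\ssl_\Z^{(r)}$-intertwining for each of the six allowed cosets $r + \Z$. The main computational tool is Lemma~\ref{lpossiblecontents}: for $c$ lying in the coset $r + \Z$, adding or removing a cell of content $c$ to/from $\bl$ corresponds to modifying exactly one of the six components. Comparing the formulas in Lemma~\ref{lpossiblecontents} with the formulas for $f_m, e_m$ on $\Lambda^n \C^\Z$, $\mathfrak F$, and their $\vee$- and $\tau$-twists reveals a clean pattern: the $\vee$-twist appears precisely when adding a cell to $\bl$ corresponds to removing one from the component (i.e., for $\bar\alpha[j], \bar\beta[j]$ and $\bar\delta$), and the $\tau$-twist appears precisely when the content modified on the component has opposite sign to $c - r$ (i.e., for $\beta[j], \bar\alpha[j]$ and $\bar\delta$). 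Combining the two rules yields the correct twist assignment in each of the six tensor factors on the right-hand side.

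The main obstacle will be careful bookkeeping across the six cases together with checking that the cosets labeled by distinct $r$'s are disjoint mod $\Z$; the latter follows from the algebraic-independence assumption on $\{A_j, B_j, \bar A_j, \bar B_j, t\} \cup \{1\}$ and from the fact that $K - L$ and $-t + \bar K - \bar L$ lie in cosets distinct from the $S_i$-values. Once the conventions are set up consistently and the normalizations fixed, each of the six cases reduces to a direct computation entirely parallel to the analogous verification in the basic-case theorem from the previous subsection.
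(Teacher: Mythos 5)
Your plan is correct and follows essentially the same approach as the paper: both define the isomorphism by an explicit bijection of bases (classes of simple bimodules $\underline\Hom(\bm,\bl)$ on one side, tensor products of wedge/Fock basis vectors on the other) and verify intertwining via Lemma~\ref{lpossiblecontents}. Your twist assignments ($\vee$ for $\bar\alpha[j],\bar\beta[j],\bar\delta$; $\tau$ for $\beta[j],\bar\alpha[j],\bar\delta$) agree with the six tensor factors in the statement and correctly read off the sign/direction flips in Lemma~\ref{lpossiblecontents}, and your remarks about coset disjointness (needed to split $x=\sum_r x^{(r)}$ into independent $\ssl_\Z^{(r)}$-actions) and about normalizing $\alpha[j]\in\cp'_{k_j}$ match the conventions the paper sets up just before the theorem.
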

\begin{proof}
The tensor product on the right hand side has a basis $$\bigotimes_{r\in S_1\cup S_2\cup S_3\cup S_4} v_{\nu_r}^{(r)}\otimes v_\delta^{(\ell(\beta)-\ell(\alpha))}\otimes v_{\bar\delta}^{(-\ell(\bar\beta)+\ell(\bar\alpha)-t)}.$$ The isomorphism takes this basis element to the class of $\underline\Hom(\bm, \bl)$ with $\bl=(\l,\bar\l)$,
$$
\l=(\mathbf A+\mathbbm a, \mathbf B+\mathbbm b, \delta),
$$
$$
\bar\l = (\bar{\mathbf A}+\bar{\mathbbm a},\bar{\mathbf B}+\bar{\mathbbm b}, \bar \delta)
$$
\begin{align*}
    \mathbbm a = (\mathbbm a[1],\ldots, \mathbbm a[p]), \mathbbm b = (\mathbbm b[1],\ldots, \mathbbm b[q]),\\
    \bar{\mathbbm a} = (\bar{\mathbbm a}[1],\ldots,\bar{\mathbbm a}[\bar p]), \bar{\mathbbm b} = (\bar{\mathbbm b}[1], \ldots, \bar{\mathbbm b}[\bar q]).
\end{align*}
such that 
\begin{align*}
    \mathbbm a[j] = \nu_{A_j+l-k_1-\ldots -k_{j-1}},\\
    \mathbbm b[j] = \nu_{-B_j-k+l_1+\ldots+l_{j-1}} \\
    \bar{\mathbbm a}[j] = \nu_{-\bar A_j-t-\bar l + \bar k_1 + \ldots+\bar k_{j-1}},\\
    \bar{\mathbbm b}[j] = \nu_{\bar B_j - t + \bar k - \bar l_1 - \ldots - \bar l_{j-1}}.
\end{align*}
\end{proof}

\section{Appendix by Serina Hu and Alexandra Utiralova}

In this section we will generalize all the constructions from Sections $3-5$ for the categories $\Rep(O_t)$ and $\Rep(Sp_t)$.

So, we let $\cc_t = \Rep(O_t)$ or $\Rep(Sp_t)$ and $G_n = O_{2n+1}$ or $Sp_{2n}$ correspondingly.

\subsection{Finite dimensional bimodules}

We would like to construct Harish-Chandra bimodules in $\Ind\cc_t$ generalizing the finite dimensional bimodules
$$
\Hom_\ck (V^\nn_\mu, V^\nn_\l) \simeq V^\nn_\l\otimes V^\nn_\mu.
$$

\begin{claim}\label{clking}\cite{Ki}
If $\nu,\l,\mu\in \cp$, then the multiplicity
\begin{align}\label{fmult}
\dim \Hom_{G_n}(V^\nn_\nu,V^\nn_\l\otimes V^\nn_\mu)
\text{ is equal to }
\sum_{\eta,\omega,\xi} c^\l_{\eta,\omega} c^\mu_{\eta, \xi} c^\nu_{\omega, \xi},
\end{align}
where for three partitions $\l,\mu,\nu$ we denote by $c^\l_{\mu,\nu}$ the corresponding Littlewood-Richardon coefficient. 
\end{claim}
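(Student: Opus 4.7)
The plan is to prove the formula, which is the classical Newell--Littlewood formula (attributed to King in the cited reference), in the stable range $n \gg |\lambda|+|\mu|+|\nu|$. This is precisely the regime relevant for the ultraproduct construction and is where the formula is independent of $n$. The natural framework is that of Koike--Terada's universal orthogonal/symplectic characters, reducing the identity to a product formula in the ring of symmetric functions.

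First I would pass from representation-theoretic multiplicities to symmetric functions. In the stable range the character of $V_\nu^{(n)}$ equals the universal character $o_\nu$ (for $G_n=O_{2n+1}$) or $sp_\nu$ (for $G_n = Sp_{2n}$) evaluated in the appropriate number of variables. These are the elements of $\Lambda$ defined by the Littlewood identities
\[
s_\lambda \;=\; \sum_\mu \Bigl(\sum_{\delta} c^\lambda_{\delta,\mu}\Bigr)\, o_\mu,
\qquad
o_\nu \;=\; \sum_\delta (-1)^{|\delta|/2} s_{\nu/\delta},
\]
where $\delta$ runs over partitions with all parts even (for $O$), and the analogous formulas with $\delta'$ even hold for $Sp$. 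Because the stable-range characters are linearly independent, the multiplicity on the left-hand side of the claim is the coefficient of $o_\nu$ in the product $o_\lambda\cdot o_\mu$ in $\Lambda$.

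The main step is then to expand $o_\lambda \cdot o_\mu$ and collect coefficients. Starting from $o_\lambda = \sum_\delta (-1)^{|\delta|/2} s_{\lambda/\delta}$ and the Littlewood--Richardson expansion $s_{\lambda/\delta} = \sum_\alpha c^\lambda_{\delta,\alpha} s_\alpha$, the product becomes a triple sum of Schur products $s_\alpha s_\beta = \sum_\theta c^\theta_{\alpha,\beta} s_\theta$, which one then rewrites in the $o$-basis via the first Littlewood identity. Reorganizing the resulting sums using the symmetry $c^\nu_{\omega,\xi}=c^\nu_{\xi,\omega}$ and the combinatorial identity
\[
\sum_{\delta,\delta'} (-1)^{(|\delta|+|\delta'|)/2}\,
c^\lambda_{\delta,\alpha}\, c^\mu_{\delta',\beta}\, c^\nu_{\gamma,\sigma}\, c^\theta_{\alpha,\beta}\, c^\theta_{\gamma,\sigma} \;=\; \text{(evaluates to the claimed triple sum)}
\]
yields the desired formula. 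The cleanest way to verify this is via the Cauchy-type identity
\[
\sum_\delta s_{2\delta}(x) \;=\; \prod_i (1-x_i^2)^{-1}\prod_{i<j}(1-x_ix_j)^{-1},
\]
together with the classical Cauchy identity $\sum_\lambda s_\lambda(x)s_\lambda(y) = \prod_{i,j}(1-x_iy_j)^{-1}$, which packages the sign-alternating sums over even partitions into a closed product.

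The main obstacle is the combinatorial bookkeeping in this last step: the triple sum of Littlewood--Richardson coefficients appears only after careful reindexing, and one must verify that the signs coming from the two occurrences of $(-1)^{|\delta|/2}$ do cancel the even-partition contributions correctly. This is where referring to King's original argument or to the symmetric-function proof of Koike--Terada is the most efficient route; the argument for $Sp$ is identical up to transposing partitions in the Littlewood identity, which is why the same formula governs both families of Deligne categories.
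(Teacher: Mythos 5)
The paper does not prove this claim; it is stated with a bare citation to King's 1971 paper \cite{Ki}, where it appears as the stable Newell--Littlewood rule. So there is no in-text argument to compare against, and what you have done is supply a proof sketch for a result the author takes as known. You also correctly note that the identity only holds in the stable range (for small $n$ one needs King's modification rules), which the paper leaves implicit but which is exactly the regime used in the subsequent ultraproduct construction.

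Your proposed route via Koike--Terada universal characters is the standard symmetric-function proof, and the outline is sound: pass to the subring of $\Lambda$ spanned by the $o_\lambda$ (resp.\ $sp_\lambda$), use the two Littlewood transition identities between the $\{s_\lambda\}$ and $\{o_\lambda\}$ bases, and package the sign-alternating sums with Cauchy identities. One technical slip to flag: you attach the condition ``$\delta$ has all parts even'' to both Littlewood identities, but it only governs the branching direction $s_\lambda=\sum_\mu\bigl(\sum_\delta c^\lambda_{\delta,\mu}\bigr)o_\mu$. The inverse expansion $o_\nu=\sum_\delta(-1)^{|\delta|/2}s_{\nu/\delta}$ sums over the different family of partitions with Frobenius coordinates $(\alpha_1,\ldots,\alpha_r\,|\,\alpha_1+1,\ldots,\alpha_r+1)$ (with the transposed family $(\alpha+1\,|\,\alpha)$ appearing on the symplectic side). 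This does not derail the argument --- the Cauchy-type product identities you invoke encode exactly this inversion --- but as written the two displayed formulas are not mutually inverse, and a careful write-up would need the correct index set before the reorganization of the triple sum goes through.
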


We will now proceed in the exact same manner as for the type $A$ case.  We will impose the same conditions on $\l,\mu$ as in Proposition \ref{cgupta}. That is, we let $\l^\nn$ be a sequence of partitions as in Definition \ref{dboldlambda}. Recall that it means that we fix $k, l\in \mathbb Z_{\ge 0}, \gamma\in\mathcal P$ and assume that 
\begin{itemize}
    \item $\ell(\l)\ll n$ (in the sense of Definition \ref{defll});
    
    \item $\l^\nn =[\alpha^\nn,\beta^\nn,\gamma]$ (in the sense of Definition \ref{defcutdiag}) and the sequences $\alpha^\nn, \beta^\nn$ are \textbf{nice} (in the sense of Definition \ref{deftheta}).
\end{itemize}

Let us fix some $\mathbf a=(a_1,\ldots,a_k)\in\Z^k$ and $\mathbf b=(b_1,\ldots,b_l)\in\Z^l$. We denote $|\mathbf a|= \sum_{i=1}^k a_i$ and $|\mathbf b|=\sum_{j=1}^l b_j.$ Then we fix some $\delta\in\mathcal P$ and put
$$
\mu^\nn = [\alpha^\nn+a,\beta^\nn +b,\delta],
$$
which is a well-defined partition  for almost all $n$ as $\alpha^\nn$ and $\beta^\nn$ are \textbf{nice}.

\begin{theorem}
Let $\l^\nn$ and $\mu^\nn$ be as above, then for any $\nu\in\mathcal P$ the multiplicity of $V_\nu^\nn$ in $\Hom_\ck(V^\nn_{\mu^\nn}, V^\nn_{\l^\nn})$ is constant for almost all $n$.
\end{theorem}

\begin{proof}

By Claim \ref{clking}, the desired multiplicity is equal to
$$
\sum_{\eta,\omega,\xi} c^{\l^\nn}_{\eta,\omega} c^{\mu^\nn}_{\eta, \xi} c^\nu_{\omega, \xi}.
$$

As in Proposition \ref{cgupta}, we claim that the sum in \ref{fmult} depends only on $k,l, \mathbf a, \mathbf b, \gamma, \delta.$ Though the sum is over all
partitions $ \eta,\omega,\xi $, we see that for a term to be nonzero, we must have $\eta\subset\l^\nn, \mu^\nn$.
There is a fixed (not depending on $n$) set of pairs $(\omega, \xi)$ so that
$$
c^\nu_{\omega, \xi} \neq 0,
$$
since $\nu $ is fixed and $|\omega|+|\xi|=|\nu|$. Thus, we will focus on $c^{\l^\nn}_{\eta,\omega}, c^{\mu^\nn}_{\eta,\xi}$. As in the proof of Proposition \ref{cgupta}, we again want to classify all possible $\eta=[\sigma, \tau,\epsilon]\subset \l^\nn, \mu^\nn$. Using the same notation as in Definition \ref{defskew}, we establish that
$$
c^{\l^\nn}_{\eta,\omega} = c_\omega(\mathbf c, \mathbf d, \gamma/\epsilon)
$$
and
$$
c^{\mu^\nn}_{\eta, \xi} = c_\xi(\mathbf c+\mathbf a, \mathbf d+\mathbf b, \delta/\epsilon)
$$
for some
\begin{align*}
    \mathbf c\in (-\mathbf a+\Z^k_{\ge 0})\cap \Z^k_{\ge 0},\\
    \mathbf d\in (-\mathbf b+\Z^l_{\ge 0})\cap \Z^l_{\ge 0}
\end{align*}
and $\epsilon\subset \gamma, \delta$.

So,
$$
\dim\Hom_{G_n}(V_\nu^\nn, V^\nn_{\l^\nn}\otimes V^\nn_{\mu^\nn}) = \sum_{\mathbf c,\mathbf d,\epsilon} c_\omega(\mathbf c, \mathbf d, \gamma/\epsilon)c_\xi(\mathbf c+\mathbf a, \mathbf d+\mathbf b, \delta/\epsilon)c^\nu_{\omega, \xi},
$$
where the sum is taken over all $\mathbf c,\mathbf d,\epsilon$ as above. 
\end{proof}

\begin{corollary}
\label{chomosp}
For $\l^\nn, \mu^\nn$ as above we define the triples
$$
\l = (\alpha, \beta, \gamma) = (\prod_\cf \alpha^\nn, \prod_\cf \beta^\nn, \gamma)
$$
and
$$
\mu = (\alpha+\mathbf a, \beta+\mathbf b, \delta)
$$
with $\alpha\in\C^k, \beta\in \C^k$.

Then, similarly to type A case, 
$$
\underline \Hom(\mu,\l) = \prod_\cf^F \Hom_\ck(V^\nn_{\mu^\nn},V^\nn_{\l^\nn})
$$
(where $F$ is a filtration on $\cp$ by finite subsets, e.g. $F^k\cp =\{\l|~|\l|\le k\}$) is a well-defined object of $\Ind\cc_t$ and is a Harish-Chandra bimodule of finite K-type.

Clearly, Lemma \ref{rextdefinitionofhom} applies in this case.
\end{corollary}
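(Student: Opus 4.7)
The plan is to mirror exactly the argument used to establish Theorem \ref{thom} in the type $A$ case, substituting the multiplicity result of the preceding theorem for the application of Corollary \ref{cgupta}. Because $\Hom_\ck(V_{\mu^\nn}, V_{\l^\nn}) \simeq V_{\l^\nn}\otimes V_{\mu^\nn}$ as $\g_n^{\rm diag}$-modules (using self-duality of $V_\mu$ for $O_{2n+1}$ and $Sp_{2n}$), the multiplicity $[V_{\l^\nn}\otimes V_{\mu^\nn}:V_\nu^\nn]$ computed above is exactly $[\Hom_\ck(V_{\mu^\nn}, V_{\l^\nn}):V_\nu^\nn]$.

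First I would fix the filtration $F$ on $S=\cp$ by finite subsets given by $F^kS = \{\l\in\cp :|\l|\le k\}$. Under this filtration we have $\End(V^{\otimes l})\in F^l\cc_n$, and by the standard argument from Example \ref{expbw} the PBW-filtration $\widetilde F^k U(\g_n)$ lies inside $F^k\cc_n$, so after passing to the quotient $U(\g_n)\twoheadrightarrow \End_\ck(V_{\l^\nn})\supset \Hom_\ck(V_{\mu^\nn}, V_{\l^\nn})$ we still have $\widetilde F^k\Hom_\ck(V_{\mu^\nn},V_{\l^\nn})\in F^k\cc_n$. Next, the preceding theorem shows that for each fixed $\nu$, the multiplicity $[\Hom_\ck(V_{\mu^\nn},V_{\l^\nn}):V_\nu^\nn]$ stabilizes for $n\gg 0$, and in particular there is a universal bound $c(\nu)$ independent of $n$.

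With these two ingredients in hand, Lemma \ref{lwelldefup} applies directly to the sequence $\Hom_\ck(V_{\mu^\nn},V_{\l^\nn})$ with its PBW-induced filtration, yielding that $\prod_\cf^{\widetilde F}\Hom_\ck(V_{\mu^\nn},V_{\l^\nn})$ is a well-defined object of $\Ind\cc_t$. Then Lemma \ref{lequivup} (whose hypotheses are satisfied since each individual $\Hom_\ck(V_{\mu^\nn},V_{\l^\nn})$ already lies in $\cc_n$ with universally bounded simple multiplicities) identifies this with $\prod_\cf^F \Hom_\ck(V_{\mu^\nn},V_{\l^\nn})$. The Harish-Chandra bimodule structure is inherited from the natural bimodule structure on each term: finite generation comes from the fact that $\Hom_\ck(V_{\mu^\nn}, V_{\l^\nn})$ is a quotient of $U(\g_n)$ (acting on the left) tensored with a fixed object (the module itself as a $\g^{\rm diag}$-module embeds in a fixed $F^m\cc_n$ up to twist), and both copies of $Z(U(\g_t))$ act via fixed central characters, hence locally finitely.

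Finite $K$-type is then immediate: for any simple $V_\nu\in\cc_t$, choose $k$ with $\nu\in F^k S$, so that
\[
\dim\Hom_{\Ind\cc_t}(V_\nu,\underline\Hom(\mu,\l))=[F^k\underline\Hom(\mu,\l):V_\nu]=\prod_\cf[\Hom_\ck(V_{\mu^\nn},V_{\l^\nn}):V_\nu^\nn],
\]
which is a finite integer by the multiplicity theorem. Finally, exactly as in Remark \ref{rextdefinitionofhom}, given abstract transcendental data $\alpha\in\C^k,\beta\in\C^l$ with $\{\alpha_i\}\cup\{\beta_j\}\cup\{t\}$ algebraically independent, one picks representative sequences $\alpha^\nn,\beta^\nn$ satisfying $(\star)$; there is no real obstacle here, the only point requiring care being that in the $O/Sp$ setting one should check that the multiplicity formula from the previous theorem is genuinely independent of $n$ (and not merely of stabilizing multiplicities appearing in intermediate Littlewood--Richardson sums) — but this is precisely what the proof above delivered, since each of the coefficients $c_\omega(\mathbf c,\mathbf d,\gamma/\epsilon)$ and $c_\xi(\mathbf c+\mathbf a,\mathbf d+\mathbf b,\delta/\epsilon)$ depends only on $k,l,\mathbf a,\mathbf b,\gamma,\delta$.
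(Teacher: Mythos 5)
Your overall strategy is exactly the one the paper uses (implicitly — the corollary is stated without a proof block precisely because the argument mirrors Theorem \ref{tend}/Theorem \ref{thom} verbatim, with the preceding multiplicity theorem replacing Corollary \ref{cgupta}): establish a universal multiplicity bound, invoke Lemma \ref{lwelldefup} for well-definedness, and read off finite $K$-type from the stabilized multiplicities. The self-duality remark is correct and that is indeed why $\Hom_\ck(V_{\mu^\nn},V_{\l^\nn})\simeq V_{\l^\nn}\otimes V_{\mu^\nn}$ so the theorem applies.

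One step is misstated, though. You write that one induces a filtration on $\Hom_\ck(V_{\mu^\nn},V_{\l^\nn})$ by ``passing to the quotient $U(\g_n)\twoheadrightarrow\End_\ck(V_{\l^\nn})\supset\Hom_\ck(V_{\mu^\nn},V_{\l^\nn})$.'' When $\mu\neq\l$ the bimodule $\Hom_\ck(V_{\mu^\nn},V_{\l^\nn})$ is \emph{not} a subobject of $\End_\ck(V_{\l^\nn})$, and it is not a cyclic quotient of $U(\g_n)$ either, so there is no canonical PBW-induced filtration on it. The PBW route is genuinely available only for $\End_\ck(V_{\l^\nn})$ (it is used in the type $A$ case exactly to identify $\underline\End(\bl)$ with a quotient of $U_\chi$). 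For general $\Hom(\mu,\l)$ the paper (cf.\ Definition \ref{dhom}) instead works directly with the coarse filtration $F$ on $\cp$ by finite subsets, for which $F^k\Hom_\ck(V_{\mu^\nn},V_{\l^\nn})\in F^k\cc_n$ is automatic; the only remaining input for Lemma \ref{lwelldefup} is the multiplicity bound, which the preceding theorem provides. With that replacement your argument is complete and agrees with the intended proof. (If you do want a PBW filtration on $\Hom(\mu,\l)$, it can be obtained from the surjection $U(\g_n)\otimes U(\g_n)^{\rm op}\otimes X\twoheadrightarrow\Hom_\ck(V_{\mu^\nn},V_{\l^\nn})$ for a suitable generating subobject $X$, but one then needs $X$ to have $n$-independent socle, which is an additional step; using $F$ directly avoids that.) A further minor point: in the $O/Sp$ setting $\g_t\subset V\otimes V$, so $\widetilde F^kU(\g_t)\in F^{2k}\cc_t$ rather than $F^k\cc_t$, but this only changes the indexing and does not affect any of the conclusions.
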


\subsection{Central characters}

Let $C_{2k}$ be the central elements defined in Section \ref{suniv}.  

\begin{definition}
Given a central character $\chi:U(\mathfrak{g}_t)\to \C$ we define the exponential central character to be the generating function
$$
\chi(z)= \frac{1}{e^{\frac{z}{2}}-e^{-\frac{z}{2}}} \sum_{k} \frac{\chi(C_{2k})}{(2k)!} z^{2k}.
$$
\end{definition}

The proof of Theorem 3.17 in \cite{U} (stated in Example \ref{exhcnonzero}) can be easily generalized to the case of the categories $\Rep(O_t)$ and $\Rep(Sp_t)$. One obtains the following result: 

\begin{claim}
If $\mathfrak{g}_t = \mathfrak o_t$ or $\mathfrak{sp}_t$, the category $\hc_{\chi,\psi}(\mathfrak{g}_t)$ is non-zero if and only if there exist some complex numbers $b_1,\ldots, b_r$, such that
$$
\chi(z)-\psi(z) = \frac{1}{e^{\frac{z}{2}}-e^{-\frac{z}{2}}} \sum_{i=1}^r \left( \mathrm{cosh}((b_i+1)z) - \mathrm{cosh}(b_iz)\right) = \sum_{i=1}^r\mathrm{sinh}\left(\frac{(2b_i+1)z}{2}\right).
$$
\end{claim}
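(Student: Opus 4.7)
The plan is to follow the proof of Theorem~3.17 in \cite{U} (the $GL$ case) and adapt the key computations to the orthogonal/symplectic setting. The proof splits into sufficiency and necessity.

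\textbf{Sufficiency.} Given $\chi(z)-\psi(z) = \sum_{i=1}^r \sinh((2b_i+1)z/2)$, I would construct a nonzero bimodule as a filtered ultraproduct of classical finite-dimensional bimodules. The key ingredient is the $OSp$ analog of Claim~\ref{cl1}, which is a routine calculation: from the formula $C_{2k}\vert_{L(\nu)} = \sum_i((\nu_i+\rho_i)^{2k}-\rho_i^{2k})$ (resp.\ with the $-1$ shift for $Sp$) together with $\sum_k \tfrac{(az)^{2k}}{(2k)!} = \cosh(az)$, one obtains
$$\chi_\nu(z) = \frac{1}{e^{z/2}-e^{-z/2}} \sum_i \bigl(\cosh((\nu_i+\rho_i)z) - \cosh(\rho_i z)\bigr),$$
with $\rho_i$ replaced by $\rho_i-1$ in the $Sp$ case. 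Applying the identity $\cosh A - \cosh B = 2\sinh(\tfrac{A-B}{2})\sinh(\tfrac{A+B}{2})$ and using $2\sinh(z/2) = e^{z/2}-e^{-z/2}$, each difference $\chi_\l(z) - \chi_\mu(z)$ telescopes into a finite sum of $\sinh((2b+1)z/2)$ terms. Conversely, given the prescribed form of $\chi-\psi$, I would invoke the $\ck$-automorphism trick from Remark~\ref{rextdefinitionofhom} to realize the $b_i$'s as ultraproducts of integers and choose sequences of partitions $\l^\nn,\mu^\nn$ matching them, then form $\underline\Hom(\bm,\bl) = \prod_\cf^F \Hom_\ck(V_{\mu^\nn}, V_{\l^\nn})$ via Corollary~\ref{chomosp}.

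\textbf{Necessity.} Suppose $M \in \hc_{\chi,\psi}(\g_t)$ is nonzero. Mirroring \cite{U}, the idea is to compute $C_{2k}^{\mathrm{left}} - C_{2k}^{\mathrm{right}}$ via a Casimir-type identity that expresses this difference in terms of the $\g_t^{\mathrm{diag}}$-action. By finite generation and semisimplicity of $\cc_t$, one reduces to the case where $M$ is generated as a bimodule by a simple subobject $V_\nu \hookrightarrow M$; on the whole of $M$ the operator acts as the scalar $\chi(C_{2k}) - \psi(C_{2k})$, while the diagonal-action expression restricts cleanly to $V_\nu$. Assembling these equalities into the exponential generating function and taking the ultraproduct yields precisely the prescribed form for $\chi(z)-\psi(z)$; the $(\Z/2)^n$ part of the Weyl group forces weights to combine in $\pm$-pairs, which is exactly what turns the single exponentials of the $GL$ case into $\cosh$ terms and, after division by $e^{z/2}-e^{-z/2}$, into $\sinh$ terms.

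\textbf{Main obstacle.} The most delicate point is bookkeeping around the half-integer structure of $\rho$, which is exactly what forces the argument of $\sinh$ to be of the form $(2b_i+1)z/2$ rather than a generic linear expression in $z$. For $O_{2n+1}$ this half-integer shift is visible directly in $\rho_i \in \Z + \tfrac12$; for $Sp_{2n}$ the shift is absorbed in the $\rho_i - 1$ appearing in the formula for $C_{2k}\vert_{L(\nu)}$, but the net effect is the same. A secondary technical point in the necessity direction is writing the Casimir difference in a form that makes sense uniformly in the Deligne category $\cc_t$, but this is a straightforward transcription of the classical identity into the universal tensor-categorical language and should follow from the ultraproduct description of $Z(U(\g_t))$ given in Section~\ref{suniv}.
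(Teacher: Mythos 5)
Your overall approach—adapt the proof of Theorem~3.17 from \cite{U}—coincides with the paper's, which explicitly declares that "the proof ... can be easily generalized'' and omits details. The formal $\cosh$/$\sinh$ manipulation is correct: from the Appendix's formula for $\chi_\nu(z)$ the telescoping $\cosh((m+k+1)z)-\cosh((m+k)z)=2\sinh(z/2)\sinh((m+k+\tfrac12)z)$ indeed produces the $\sinh\bigl(\tfrac{(2b+1)z}{2}\bigr)$ terms, with the half-integer coming from $\rho$ exactly as you describe.

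There is a genuine gap in the sufficiency direction. You propose to realize an arbitrary pair $(\chi,\psi)$ with $\chi-\psi=\sum_i\sinh\bigl(\tfrac{(2b_i+1)z}{2}\bigr)$ by building $\underline\Hom(\bm,\bl)=\prod_\cf^F\Hom_\ck(V_{\mu^\nn},V_{\l^\nn})$ after choosing the $b_i$ as ultraproducts of integers. But the $\underline\Hom$ construction only produces bimodules whose right central character is $\chi_\mu$ for a tuple $\mu=(\alpha,\beta,\delta)$, i.e.\ $\psi(z)$ is forced to be a finite sum of exponentials determined by finitely many parameters and a partition. Since $Z(U(\g_t))=\C[C_2,C_4,\ldots]$ is a free polynomial algebra, the central character $\psi$ is allowed to be an \emph{arbitrary} (even) power series, and for a generic such $\psi$ no $\underline\Hom(\bm,\bl)$ has right central character $\psi$—no matter which automorphism of $\C$ over $\ck$ you apply. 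The claim, however, asserts non-vanishing of $\hc_{\chi,\psi}(\g_t)$ for \emph{all} such pairs. The correct route (as in the $GL$ case in \cite{U}) is a translation-functor argument: take $U_\psi\otimes V_\nu$ for a finite-dimensional $V_\nu\in\cc_t$ and project onto the generalized eigenspace for the left $Z(U(\g_t))$-action with the desired $\chi$; each single-box translation shifts the exponential central character by precisely one $\sinh\bigl(\tfrac{(2b+1)z}{2}\bigr)$ term, and this works for arbitrary $\psi$. Your necessity sketch (reduce to a cyclic sub-bimodule generated by some $V_\nu\hookrightarrow M$ and compare the left and right central characters via the $\g_t^{\rm diag}$-action on that generator) is the right idea and is where the $\underline\Hom$-type computation you invoke actually belongs.
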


\begin{lemma}
Let $\nu\in \cp$ and let $V_\nu$ be the corresponding simple object in $\cc_t$. Then $Z(U(\mathfrak{g}_t))$ acts on $V_\nu$ via the character $\chi_\nu(z)$ with
\begin{align*}
\chi_\nu(z) = \frac{1}{e^{\frac{z}{2}}-e^{-\frac{z}{2}}} \sum_{i=1}^{\ell(\nu)}\left(\cosh\left(\nu_iz+\tfrac{t}{2}z-iz\right)-\cosh\left(\tfrac{t}{2}z -iz\right)\right)
\end{align*}

\end{lemma}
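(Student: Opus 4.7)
The plan is to mirror the proof of Claim \ref{cl1}, working with the explicit formulas for $C_{2k}$ given in Section \ref{suniv} and then summing the resulting generating function. First I would note that, since $\chi_\nu=\prod_\cf \chi_\nu^\nn$, we have $\chi_\nu(C_{2k})=\prod_\cf \chi_\nu^\nn(C_{2k})$, so it suffices to compute each $\chi_\nu^\nn(C_{2k})$ and pass to the ultraproduct.

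For $G_n=O_{2n+1}$ the positive-root computation gives $\rho_i=n-i+\tfrac12$, so the displayed formula for $C_{2k}$ yields
$$
\chi_\nu^\nn(C_{2k})=\sum_{i=1}^{\ell(\nu)}\!\left((\nu_i+n-i+\tfrac12)^{2k}-(n-i+\tfrac12)^{2k}\right).
$$
For $G_n=Sp_{2n}$, $\rho_i=n-i+1$, so $\rho_i-1=n-i$ and the $Sp$ version of the formula reads
$$
\chi_\nu^\nn(C_{2k})=\sum_{i=1}^{\ell(\nu)}\!\left((\nu_i+n-i)^{2k}-(n-i)^{2k}\right).
$$
Now I would pass to the ultraproduct. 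The key point, and the place where one must be careful, is that the convention $t=\prod_\cf(2n+1)$ in the orthogonal case converts $n-i+\tfrac12=\tfrac{2n+1}{2}-i$ into $\tfrac{t}{2}-i$, while the convention $t=\prod_\cf 2n$ in the symplectic case converts $n-i$ into $\tfrac{t}{2}-i$ as well. So the ``$-1$'' shift in the $Sp$ normalization of $C_{2k}$ is exactly what is needed to make both cases produce the uniform identity
$$
\chi_\nu(C_{2k})=\sum_{i=1}^{\ell(\nu)}\!\left((\nu_i+\tfrac{t}{2}-i)^{2k}-(\tfrac{t}{2}-i)^{2k}\right).
$$

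Finally I would substitute this into the generating function $\chi_\nu(z)=\frac{1}{e^{z/2}-e^{-z/2}}\sum_{k\ge 0}\frac{\chi_\nu(C_{2k})}{(2k)!}z^{2k}$, swap the order of summation, and recognize each inner sum as the Taylor series of $\cosh$: since $\cosh(ax)=\sum_{k\ge 0}\frac{a^{2k}}{(2k)!}x^{2k}$, we get
$$
\chi_\nu(z)=\frac{1}{e^{z/2}-e^{-z/2}}\sum_{i=1}^{\ell(\nu)}\!\left(\cosh\!\bigl((\nu_i+\tfrac{t}{2}-i)z\bigr)-\cosh\!\bigl((\tfrac{t}{2}-i)z\bigr)\right),
$$
which is exactly the claimed formula. (The $k=0$ term contributes $0$ in both summands, so including or excluding it is harmless.)

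The main obstacle is purely bookkeeping: one must verify that the $Sp$-specific shift and the two different identifications of $t$ with ultraproducts of $2n$ versus $2n+1$ conspire to yield a single uniform expression. Once this compatibility is established, the rest of the argument is a routine application of the $\cosh$ Taylor series, and the result follows in both the orthogonal and the symplectic case simultaneously.
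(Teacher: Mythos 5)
Your proof is correct and follows essentially the same route as the paper: compute $\chi_\nu^{(n)}(C_{2k})$ from $\rho$ in types $B$ and $C$, observe that the two normalization conventions ($t=\prod_\cf(2n+1)$ versus $t=\prod_\cf 2n$, together with the $-1$ shift in the $Sp$ definition of $C_{2k}$) both yield $\nu_i+\tfrac{t}{2}-i$ and $\tfrac{t}{2}-i$, and then recognize the generating function as a sum of $\cosh$ terms. No discrepancy with the paper's argument.
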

\begin{proof}
Let us calculate the vector $\rho\in E$ explicitly. Our choice of simple roots will be standard: in type $B$ simple roots are
$$
\{e_i-e_{i+1}\}_{i<n}\cup\{e_n\},
$$
and in type $C$ we have:
$$
\{e_i-e_{i+1}\}_{i<n}\cup\{2e_n\}.
$$
Thus, in type $B$
$$
\rho = (n-\tfrac{1}{2}, n-\tfrac{3}{2},\ldots, \tfrac{3}{2}, \tfrac{1}{2}),
$$
and in type $C$:
$$
\rho =  (n, n-1,\ldots, 2, 1).
$$

In the case $G_n = O_{2n+1}$, we get
$$
\chi^\nn_\nu(C_{2k})= \sum_{i=1}^{\ell(\nu)} \left(\nu_i + \tfrac{2n+1-2i}{2}\right)^{2k}-\left(\tfrac{2n+1-2i}{2}\right)^{2k}.
$$

Thus,
$$
\chi^\nn(z) = \sum_{i=1}^{\ell(\nu)}\cosh\left(\nu_i z+\tfrac{2n+1-2i}{2}z\right)-\cosh\left(\tfrac{2n+1-2i}{2}z\right). 
$$

Now, since $t=\prod_\cf (2n+1)$, 
$$
\chi(z) = \frac{1}{e^{\frac{z}{2}}-e^{-\frac{z}{2}}} \sum_{i=1}^{\ell(\nu)}\left(\cosh\left(\nu_i z+\tfrac{t}{2}z-i z\right)-\cosh\left(\tfrac{t}{2}z -iz\right)\right).
$$

In the case $G_n= Sp_{2n}$:
$$
\chi^\nn_\nu(C_{2k})=\sum_{i=1}^{\ell(\nu)}\left(\nu_i+(n-i+1)-1\right)^{2k}-\left((n-i+1)-1\right)^{2k}.
$$
So, since $t=\prod_\cf 2n$, we get
$$
\chi_\nu(z) = \frac{1}{e^{\frac{z}{2}}-e^{-\frac{z}{2}}} \sum_{i=1}^{\ell(\nu)}\left(\cosh\left(\nu_iz+\tfrac{t}{2}z-iz\right)-\cosh\left(\tfrac{t}{2}z -iz\right)\right).
$$

\end{proof}

\begin{remark}
We showed that
$$
\chi_\nu(z) = \frac{1}{2(e^{\frac{z}{2}}-e^{-\frac{z}{2}}) }\sum_{i=1}^{\ell(\nu)}\left(e^{\nu_iz+\frac{t}{2}z-iz}-e^{\frac{t}{2}z-iz} + e^{-\nu_iz-\frac{t}{2}z+iz} - e^{-\frac{t}{2}z+iz}\right)
$$
(by definition of $\cosh$). Thus, if we define
$$
\widetilde\chi_\nu(z) =  \frac{1}{e^{\frac{z}{2}}-e^{-\frac{z}{2}}} \sum_{i=1}^{\ell(\nu)} e^{\frac{t}{2}z-iz}\left( e^{\nu_iz}-1 \right)
$$
we get
$$
\chi_\nu(z) = \frac{1}{2}(\widetilde \chi_\nu(z)-\widetilde\chi_\nu(-z)).
$$
\end{remark}

We see that the expression for $\widetilde \chi_\nu(z)$ is very similar to the exponential central character of the simple $GL_t$ module $V_{(\nu,\emptyset)}$. In particular, for $\l=[\alpha,\beta,\gamma]$ we can use Claim \ref{cl2} directly to express $\widetilde \chi_\l(z)$ in terms $\alpha_i,\beta_i,\gamma_i$. 

\begin{claim}
Let $\l=[\alpha, \beta, \gamma]$  with $\ell(\alpha) = k, \ell(\beta) = l, \ell(\gamma)=m$ and $q = e^z$. Then
$$
\widetilde\chi_\l(\log(q)) = q^{\frac{t+1}{2}}( \sum_{j=1}^k [\alpha_j+l]_q q^{-j} + \sum_{j=1}^l [\beta_j-m]_q q^{-\beta_j+j-1-k} + \sum_{j=1}^{m} [\gamma_j+l]_q q^{-k-j}).
$$
\end{claim}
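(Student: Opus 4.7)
The plan is to observe that $\widetilde\chi_\l(z)$, as defined by
$$
\widetilde\chi_\l(z) = \frac{1}{e^{z/2}-e^{-z/2}}\sum_{i=1}^{\ell(\l)} e^{(t/2-i)z}\bigl(e^{\l_i z}-1\bigr),
$$
has exactly the same functional form as the exponential central character $\chi_\l(z)$ of the polynomial $GL_t$-module $V_{(\l,\emptyset)}$ computed in Claim \ref{cl1}. Indeed, using the identity $e^{z/2}-e^{-z/2} = e^{-z/2}(e^z-1)$, one can move the factor $e^{z/2}$ into the sum and rewrite
$$
\widetilde\chi_\l(z) = \frac{1}{e^z-1}\sum_{i=1}^{\ell(\l)} e^{\bigl(\frac{t+1}{2}-i\bigr)z}\bigl(e^{\l_i z}-1\bigr) = \frac{q^{(t+1)/2}}{q-1}\sum_{i=1}^{\ell(\l)}(q^{\l_i}-1)q^{-i} = q^{\frac{t+1}{2}}\sum_{i=1}^{\ell(\l)}[\l_i]_q\, q^{-i}.
$$
This is precisely the starting expression used in Claim \ref{cl2} (see the example preceding Lemma \ref{ltransposeddiagram}).

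Given this reduction, the desired identity for $\widetilde\chi_\l(\log q)$ when $\l=[\alpha,\beta,\gamma]$ follows by the same combinatorial manipulation already carried out in Claim \ref{cl2}. The key steps to repeat, in order, are: split the sum $\sum_j [\l_j]_q q^{-j}$ into the contributions from the first $k$ rows (giving $\sum_{j=1}^k [\alpha_j+l]_q q^{-j}$ by definition of $\alpha$); from the next $m$ rows (giving $q^{-k}\sum_{j=1}^m [\gamma_j+l]_q q^{-j}$ by definition of $\gamma$); and from the remaining rows, which make up the conjugate-shifted partition $\mu$ whose transpose is $\beta-(m,\ldots,m)$. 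Applying Lemma \ref{ltransposeddiagram} to the tail then converts the row sum over $\mu$ into a column sum, producing $\sum_{j=1}^l [\beta_j]_q q^{-\beta_j+j-1-k}$ after accounting for the shift by $m$.

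Since every ingredient of the proof of Claim \ref{cl2} is a purely algebraic identity in the variable $q$ depending only on the combinatorial data $(\alpha,\beta,\gamma)$, and since the prefactor $q^{(t+1)/2}$ matches in both cases, no additional argument is needed: the formula is inherited verbatim. In particular, the main obstacle one might anticipate -- reproving the transpose identity in Lemma \ref{ltransposeddiagram} or redoing the bookkeeping for the three regions of the Young diagram -- is entirely avoided by the above observation. The only genuine content of the claim is the initial rewriting of $\widetilde\chi_\l(z)$ to match $\chi_\l(z)$ of type $A$; once this is noted, the statement is a corollary of Claim \ref{cl2}.
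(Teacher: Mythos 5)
Your proposal is correct and matches the paper's own (one-line) argument: the Remark immediately before this Claim notes that "$\widetilde\chi_\l$ is very similar to the exponential central character of the simple $GL_t$-module $V_{(\l,\emptyset)}$," and the paper then invokes Claim \ref{cl2} directly, exactly as you do. Your explicit rewriting $\frac{1}{e^{z/2}-e^{-z/2}}=\frac{e^{z/2}}{e^z-1}$, which turns $\widetilde\chi_\l(z)$ into $\frac{1}{e^z-1}\sum_i e^{(\frac{t+1}{2}-i)z}(e^{\l_i z}-1)=q^{\frac{t+1}{2}}\sum_i[\l_i]_qq^{-i}$, is precisely the detail the paper leaves implicit, and it is the only content needed beyond citing Claim \ref{cl2}.
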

\begin{corollary}\label{ccentrcharnottypea}
Let $\l=(\alpha, \beta, \gamma)$ and $\mu=(\alpha+\mathbf a, \beta+\mathbf b,\delta)$. The Harish-Chandra bimodule $\underline \Hom(\mu,\l)$ constructed in Corollary \ref{chomosp} lies inside the category $\hc_{\chi,\psi}$ with
$$
2\chi(\log(q)) = \widetilde\chi(\log(q))-\widetilde\chi(-\log(q));~2\psi(\log(q)) = \widetilde\psi(\log(q))-\widetilde\psi(-\log(q))
$$
and
$$
\widetilde\chi(\log(q)) = q^{\frac{t+1}{2}}( \sum_{j=1}^k [\alpha_j+l]_q q^{-j} + \sum_{j=1}^l [\beta_j-m]_q q^{-\beta_j+j-1-k} + \sum_{j=1}^{\ell(\gamma)} [\gamma_j+l]_q q^{-k-j}),
$$
$$
\widetilde\psi(\log(q))= q^{\frac{t+1}{2}}( \sum_{j=1}^k [\alpha_j+l+a_j]_q q^{-j} + \sum_{j=1}^l [\beta_j+b_j]_q q^{-\beta_j-b_j+j-1-k} + \sum_{j=1}^{\ell(\delta)} [\delta_j+l]_q q^{-k-j})
$$
\end{corollary}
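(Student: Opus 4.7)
The plan is to follow the blueprint of the analogous type $A$ theorem at the end of Section 4, transferring the classical formula from each finite level $n$ to $\Ind\cc_t$ via a filtered ultraproduct, and then applying the symmetrization identity $\chi_\nu(z)=\tfrac{1}{2}\bigl(\widetilde\chi_\nu(z)-\widetilde\chi_\nu(-z)\bigr)$ recorded in the preceding Remark.

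First, by Corollary \ref{chomosp} together with the obvious analog of Remark \ref{rextdefinitionofhom} in the $O$/$Sp$ setting, one can realize
$$
\underline\Hom(\mu,\l) = \prod_\cf^F \Hom_\ck(V_{\mu^\nn}, V_{\l^\nn})
$$
for sequences of partitions $\l^\nn=[\alpha^\nn,\beta^\nn,\gamma]$ and $\mu^\nn=[\alpha^\nn+\mathbf a,\beta^\nn+\mathbf b,\delta]$ satisfying condition $(\star)$, with $\alpha_i=\prod_\cf \alpha^\nn_i$ and $\beta_j=\prod_\cf \beta^\nn_j$. At each finite level, the classical Harish-Chandra bimodule $\Hom_\ck(V_{\mu^\nn},V_{\l^\nn})$ lies in $\hc_{\chi^\nn,\psi^\nn}(\g_n)$ with $\chi^\nn=\chi_{\l^\nn}$ and $\psi^\nn=\chi_{\mu^\nn}$, and by construction of the filtered ultraproduct the left (resp.\ right) copy of $Z(U(\g_t))$ acts on $\underline\Hom(\mu,\l)$ via $\chi=\prod_\cf \chi^\nn$ (resp.\ $\psi=\prod_\cf\psi^\nn$).

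Second, the preceding Claim is purely a combinatorial manipulation of the expression $\sum_j [\l_j]_q q^{-j}$ (via Lemma \ref{ltransposeddiagram}, which is independent of the underlying group) and therefore applies verbatim at each finite level to give
$$
\widetilde\chi^\nn(\log q) = q^{\rho_n}\Bigl(\sum_{j=1}^k[\alpha^\nn_j+l]_q q^{-j} + \sum_{j=1}^l[\beta^\nn_j]_q q^{-\beta^\nn_j+j-1-k} + \sum_{j=1}^{\ell(\gamma)}[\gamma_j+l]_q q^{-k-j}\Bigr),
$$
where $\rho_n=\tfrac{2n+1}{2}$ in the $O_{2n+1}$ case and $\rho_n=\tfrac{2n+1}{2}$ after absorbing the $\rho_i-1$ shift into the sum in the $Sp_{2n}$ case, exactly as in the two branches of the preceding Lemma. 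The same formula holds for $\widetilde\psi^\nn(\log q)$ with $\alpha^\nn,\beta^\nn,\gamma$ replaced by $\alpha^\nn+\mathbf a,\beta^\nn+\mathbf b,\delta$.

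Third, pass to the ultraproduct: since $t=\prod_\cf(2n+1)$ (resp.\ $t=\prod_\cf 2n$) the prefactor $q^{\rho_n}$ tends to $q^{(t+1)/2}$, while each bracket $[\alpha^\nn_j+l]_q$ and each exponent $-\beta^\nn_j+j-1-k$ tends term-by-term to the claimed entries. Applying the identity $2\chi^\nn(\log q)=\widetilde\chi^\nn(\log q)-\widetilde\chi^\nn(-\log q)$ at each $n$ and taking the ultraproduct yields the asserted $2\chi(\log q)=\widetilde\chi(\log q)-\widetilde\chi(-\log q)$, and likewise for $\psi$. The argument is essentially ultraproduct bookkeeping and presents no genuine obstacle; the only slightly delicate point is confirming that the $B$ and $C$ normalizations of $\rho$ both reduce to the common prefactor $q^{(t+1)/2}$, which is ensured by the uniform case split already performed in the preceding Lemma.
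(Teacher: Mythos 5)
Your proposal is correct and takes essentially the same route the paper implicitly intends: realize $\underline\Hom(\mu,\l)$ as a filtered ultraproduct via Corollary \ref{chomosp}, observe that each finite-level factor lies in $\hc_{\chi_{\l^\nn},\chi_{\mu^\nn}}(\g_n)$, apply the purely combinatorial manipulation of Lemma \ref{ltransposeddiagram}/Claim \ref{cl2} to the generating function $\widetilde\chi$ (which has the same algebraic form $q^{\cdot}\sum_j[\l_j]_q q^{-j}$ as the $GL$ case), and pass to the ultraproduct together with the symmetrization identity $\chi_\nu = \tfrac12(\widetilde\chi_\nu(z)-\widetilde\chi_\nu(-z))$ from the Remark. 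The only slip is in your stated prefactor for the orthogonal case: for $G_n=O_{2n+1}$ one has $t_n=2n+1$, so the finite-level prefactor is $q^{(t_n+1)/2}=q^{n+1}=q^{(2n+2)/2}$, not $q^{(2n+1)/2}$; the latter is the correct value only in the $Sp_{2n}$ case, where $t_n=2n$. Since in both cases this becomes $q^{(t+1)/2}$ upon taking the ultraproduct, the argument and the final statement are unaffected.
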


\subsection{Spherical bimodules}

In this subsection we will construct the bimodules $\underline \End(\l)= \underline\Hom(\l,\l)$ as a quotient of $U_\chi$ with $\chi$ as in Corollary \ref{ccentrcharnottypea}. 

We want to construct a subbimodule of $U_\chi$ which we will denote $\underline{ \mathrm{Ann}}(\l)$ and that fits into a short exact sequence
$$
0\to \underline{\mathrm{Ann}} (\l) \to U_\chi\to \underline \End(\l) \to 0.
$$
Our plan is to take the filtered ultraproduct of the two-sided ideals $\mathrm{Ann}(V^\nn_{\l^\nn})$ with respect to the PBW-filtration and show that it is a well-defined object of $\Ind(\cc_t)$.

For the ease of notation we will assume further that $G_n$ is isomorphic either to $O_{2n}$  or to $Sp_{2n}$ and is embedded naturally into $GL_{2n}=GL_{V^\nn}$. Let $Q$ and $\omega$ be the symmetric and the skew-symmetric bilinear forms preserved by $G_n$ in each case. For the sake of computation we put
$$
    Q(v_i, v_j) = \delta_{n+i,j};
$$
$$
    \omega(v_i, v_j) = \begin{cases} \delta_{n+i,j}, \text{ if } i\le n,\\
    -\delta_{i, n+j}, \text{ otherwise},
    \end{cases}
$$
where $v_i$ is a basis of $V$ indexed by numbers $\{1,\ldots,2n\}$ modulo $2n$.

\begin{remark}
All the following computations work also for $G_n= O_{2n+1}$, where we define the symmetric bilinear form $Q$ in the same way as above and add the condition that
$$
Q(v_{2n+1}, v_i) = \delta_{i, 2n+1}.
$$
\end{remark}

The forms $Q,\omega$ provide an isomorphism between $\gl_{V^\nn}$ and $V^\nn\otimes V^\nn$ given by
$$
\widetilde Q: E_{ij}\mapsto v_i\otimes v_{n+j};
$$
$$
\widetilde \omega: E_{ij} \mapsto\begin{cases} -v_i\otimes v_{n+j}, \text{ if } j\le n;\\
v_i\otimes v_{n+j}, \text{ otherwise}.
\end{cases}
$$

The images of $\mathfrak {so}_{(V^\nn,Q)}$ and $\mathfrak {sp}_{(V^\nn,\omega)}$ under these maps are isomorphic to $\Lambda^2 V^\nn$ and  $S^2V^\nn$ correspondingly. Let $\epsilon = -1$  if $G_n= O_{2n}$ and  $\epsilon = 1$  if $G_n = Sp_{2n}$. Put $a_{ij} = v_i\otimes v_j + \epsilon v_j\otimes v_i$. Then $\{a_{ij}\}_{i,j\le 2n}$ span $\mathfrak{g}_n$. 

\begin{definition}
Define $A = \sum a_{ij} \otimes E_{ij} \in U(\mathfrak{g}_n)\otimes Mat_{2n}$ to be a matrix with coefficients in $U(\mathfrak{g}_n)$.
\end{definition}

\begin{lemma}\label{lannsimnontypea} (The analogue of Lemma \ref{lannsim}).
Let $I = \{i_1, i_2, i_3\}$ and $J = \{j_1, j_2, j_3\}$ be any two three-element subsets of $\{1,\ldots, n\}$. Then the minor $A_{I,J}$ annihilates $S^mV^\nn$ for all $m$.
\end{lemma}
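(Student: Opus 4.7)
The plan is to compute the image of $A_{I,J}$ in $\End(S^m V)$ and show that it vanishes. Realize $S^\bullet V = \ck[v_1,\ldots,v_{2n}]$, on which each $E_{pq} \in \gl_V$ acts as the first-order operator $v_p\,\partial_{v_q}$.

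First I would unpack the embedding $\g_n \hookrightarrow \gl_V$ induced by $Q$ (in the orthogonal case) or $\omega$ (in the symplectic case). A direct computation with the explicit forms given in the excerpt shows that, for $i,j \in \{1,\ldots,n\}$, the element $a_{ij} \in \g_n$ corresponds in $\gl_V$ to $E_{i,\,j+n} + \epsilon\,E_{j,\,i+n}$, and hence acts on $S^m V$ as the first-order differential operator
\[
D_{ij} \;:=\; v_i\,\partial_{v_{j+n}} \;+\; \epsilon\, v_j\,\partial_{v_{i+n}}.
\]

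The crucial observation is that, because the row and column index sets $I,J$ are both contained in $\{1,\ldots,n\}$, every operator $D_{i_p j_q}$ lies in the commutative polynomial subalgebra $\ck[v_1,\ldots,v_n,\,\partial_{v_{n+1}},\ldots,\partial_{v_{2n}}] \subset \End(S^m V)$: each $D_{i_p j_q}$ involves only the ``top'' coordinates $v_k$ and the ``bottom'' derivatives $\partial_{v_{k+n}}$, and these never interact. Consequently the images of the nine entries $a_{i_p,j_q}$ pairwise commute in $\End(S^m V)$, so the image of the minor $A_{I,J}$ is independent of any choice of ordering and equals the ordinary commutative determinant $\det\bigl(D_{i_p j_q}\bigr)_{p,q=1}^{3}$.

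Finally, I would observe the rank-two decomposition
\[
M \;=\; \bigl(v_{i_p}\bigr)_{p}\,\bigl(\partial_{v_{j_q+n}}\bigr)_{q} \;+\; \epsilon\,\bigl(\partial_{v_{i_p+n}}\bigr)_{p}\,\bigl(v_{j_q}\bigr)_{q},
\]
where each summand is an outer product of a column vector by a row vector, hence of rank one over the commutative subalgebra above. Therefore $M$ has rank at most two, so $\det M = 0$ in $\End(S^m V)$, and $A_{I,J}$ indeed annihilates $S^m V$. The only real obstacle is the sign bookkeeping required to derive the formula for $D_{ij}$ from the explicit shapes of $Q$ and $\omega$; once that is carried out, the rank argument is elementary and uniform in $\epsilon$, subsuming both the orthogonal and symplectic cases at once.
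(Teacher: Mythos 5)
Your proof is correct and follows the same overall strategy as the paper's: both reduce to the observation that for $i,j\le n$ the operators $v_i$ and $\partial_{v_{n+j}}$ all commute, so the nine entries $a_{i_pj_q}$ land in a commutative polynomial subalgebra of $\End(S^mV)$ and the minor $A_{I,J}$ becomes an honest $3\times 3$ commutative determinant. Where you part ways is the last step: the paper simply writes out that $3\times 3$ determinant with entries $x_{i_p}y_{j_q}+\epsilon x_{j_q}y_{i_p}$ and asks the reader to verify by expansion that it vanishes, whereas you observe that the matrix factors as $u\,w^{T}+\epsilon\,u'\,w'^{T}$, a sum of two rank-one outer products, so it has rank at most $2$ and its $3\times 3$ determinant vanishes by Cauchy--Binet (over any commutative ring). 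Your rank-two decomposition is cleaner and more conceptual than brute-force expansion, it makes the uniformity in $\epsilon$ transparent, and it shows exactly why $3$ is the right minor size; it is a genuine, if minor, improvement on the paper's ``compute and check'' phrasing.
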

\begin{proof}
As in the proof of Lemma \ref{lannsim} we want to consider the image of elements $a_{ij}$ in the algebra of differential operators on $(V^\nn)^*$. We get that $a_{ij}$ acts on $S^m V^\nn$ as $x_i\frac{\partial}{\partial x_{n+j}} + \epsilon x_j\frac{\partial}{\partial x_{n+i}}$
if $i,j\le n$.

Since $x_i$ and $\frac{\partial}{\partial x_{n+j}}$ commute for all $i,j\le n$, we denote $\frac{\partial}{\partial x_{n+j}}$ by $y_j$ and consider the image of $A_{I,J}$ in the commutative algebra $\ck[x_i,y_j]_{i, j\in I\cup J}$.

It is left to compute
$$
\det \begin{pmatrix}
x_{i_1}y_{j_1}+\epsilon x_{j_1} y_{i_1}& x_{i_1}y_{j_2}+\epsilon x_{j_2} y_{i_1}& x_{i_1}y_{j_3}+\epsilon x_{j_3} y_{i_1}\\
x_{i_2}y_{j_1}+\epsilon x_{j_1} y_{i_2}& x_{i_2}y_{j_2}+\epsilon x_{j_2} y_{i_2}& x_{i_2}y_{j_3}+\epsilon x_{j_3} y_{i_2}\\
x_{i_3}y_{j_1}+\epsilon x_{j_1} y_{i_3}& x_{i_3}y_{j_2}+\epsilon x_{j_2} y_{i_3}& x_{i_3}y_{j_3}+\epsilon x_{j_3} y_{i_3}
\end{pmatrix}
$$
and verify that the resulting expression is indeed equal to zero in $k[x_i, y_j]$.

We note that
$$
\begin{pmatrix}
x_{i_1}y_{j_1}+\epsilon x_{j_1} y_{i_1}& x_{i_1}y_{j_2}+\epsilon x_{j_2} y_{i_1}& x_{i_1}y_{j_3}+\epsilon x_{j_3} y_{i_1}\\
x_{i_2}y_{j_1}+\epsilon x_{j_1} y_{i_2}& x_{i_2}y_{j_2}+\epsilon x_{j_2} y_{i_2}& x_{i_2}y_{j_3}+\epsilon x_{j_3} y_{i_2}\\
x_{i_3}y_{j_1}+\epsilon x_{j_1} y_{i_3}& x_{i_3}y_{j_2}+\epsilon x_{j_2} y_{i_3}& x_{i_3}y_{j_3}+\epsilon x_{j_3} y_{i_3}
\end{pmatrix}
 = \begin{pmatrix}
x_{i_1}& y_{i_1}\\
x_{i_2}& y_{i_2}\\
x_{i_3}& y_{i_3}
\end{pmatrix}  \begin{pmatrix}
y_{j_1}& y_{j_2}& y_{j_3}\\
\epsilon x_{j_1}& \epsilon x_{j_2} & \epsilon x_{j_3}
\end{pmatrix},
$$
so its rank is at most two.
\end{proof}

\begin{corollary}\label{cannnontypea}
(The analogue of Corollary \ref{cann}). Let $I=\{i_1,\ldots,i_{2k+1}\}, J=\{j_1,\ldots, j_{2k+1}\}$ be any two $2k+1$-element subsets of $\{1,\ldots, n\}$. For any $\l_1,\ldots, \l_{k}\in \mathbb N$ the minor $A_{I,J}$ annihilates 
$$
W_\l = S^{\l_1}V^\nn\otimes \ldots \otimes S^{\l_k}V^\nn.
$$
\end{corollary}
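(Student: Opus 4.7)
The plan is to mirror the proof of Corollary \ref{cann} in the type $A$ setting, substituting the $3\times 3$ base case of Lemma \ref{lannsimnontypea} for the $2\times 2$ base case of Lemma \ref{lannsim}. The minor size $2k+1$ is chosen precisely so that the accompanying pigeonhole argument forces three repeated slots: $\lceil(2k+1)/k\rceil = 3$ for every $k\ge 1$, matching exactly the input required by Lemma \ref{lannsimnontypea}.

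Concretely, I would first pass to the iterated coproduct. Since $U(\g_n)^{\otimes k}$ acts on $W_\l$, the action of $a_{ij}$ on $W_\l$ agrees with that of $\Delta^{k-1}(a_{ij}) = \sum_{m=1}^k a_{ij}^{(m)}$, where $a_{ij}^{(m)}$ has $a_{ij}$ in the $m$-th slot and $1$ elsewhere; entries $a_{ij}^{(m)}$ and $a_{i'j'}^{(m')}$ commute whenever $m\neq m'$. Setting $A^{(m)} \coloneqq \sum_{i,j} a_{ij}^{(m)}\otimes E_{ij}$ and $B \coloneqq A^{(1)}+\cdots+A^{(k)}$, the action of $A_{I,J}$ on $W_\l$ coincides with that of $B_{I,J}$, which I read off as the pairing
$$
B_{I,J} = \langle v_{i_1}\wedge\cdots\wedge v_{i_{2k+1}}\,\big|\,\Lambda^{2k+1}B\,(v_{j_1}\wedge\cdots\wedge v_{j_{2k+1}})\rangle.
$$
Expanding the right-hand side produces a sum over tuples $(m_1,\ldots,m_{2k+1})\in\{1,\ldots,k\}^{2k+1}$ of terms $A^{(m_1)}v_{j_1}\wedge\cdots\wedge A^{(m_{2k+1})}v_{j_{2k+1}}$. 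By pigeonhole, in every such tuple some $m$ appears in at least three positions $t_1,t_2,t_3$; commutativity of entries in distinct slots lets me move those three factors $A^{(m)}$ to the leftmost positions, and antisymmetrization in the first three slots extracts a $3\times 3$ minor $A^{(m)}_{K,\{j_{t_1},j_{t_2},j_{t_3}\}}$ with $K\subset I\subset\{1,\ldots,n\}$ in front of the remaining wedge.

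Lemma \ref{lannsimnontypea} then guarantees that every such minor annihilates $S^{\l_m}V$, and hence kills $W_\l$ through the $m$-th tensor factor, so each summand of $B_{I,J}$ vanishes on $W_\l$ and $A_{I,J}\in\mathrm{Ann}(W_\l)$. The only step requiring real computation is the $3\times 3$ base case itself, which has already been dispensed with in Lemma \ref{lannsimnontypea} via an explicit determinant identity; the remaining steps---coproduct, pigeonhole, commuting-slot rearrangement, and wedge expansion---transcribe directly from the proof of Corollary \ref{cann}. The sole conceptual point to double-check is that permuting $A^{(m)}$ factors across distinct slots introduces no sign beyond the usual wedge signs, which follows from pairwise commutation of entries in distinct slots $m\ne m'$.
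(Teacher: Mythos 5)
Your proposal matches the paper's proof essentially verbatim: iterated coproduct, pigeonhole over $k$ slots forcing a thrice-repeated slot among $2k+1$ factors, commuting-slot rearrangement, extraction of a $3\times 3$ minor, and invocation of Lemma \ref{lannsimnontypea}. The only cosmetic difference is your explicit $\lceil(2k+1)/k\rceil=3$ remark, which the paper leaves implicit.
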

\begin{proof}
The proof is analogous to the proof of Corollary \ref{cann}.

$U(\mathfrak{g}_n)^{\otimes k}$ acts naturally on $W_{\l}$ and the action of $a_{ij}\in U(\mathfrak{g}_n)$ on $W_{\l}$ coincides with the action of $\Delta^{k-1}(a_{ij})\subset (U(\mathfrak{g}_n))^{\otimes k}$, where 
$$
\Delta(a_{ij})= a_{ij}\otimes 1+1\otimes a_{ij},
$$
i.e. $\Delta$ is the comultiplication on $U(\mathfrak{g}_n)$. 

For $m = 1, \ldots, k$, let $a_{ij}^{(m)}\subset (U(\mathfrak{g}_n))^{\otimes k}$ be the elements
$$
a_{ij}^{(m)} := \underbrace{1\otimes\ldots \otimes 1}_{m-1} \otimes~
a_{ij} \otimes 1 \otimes \ldots \otimes 1.
$$
Then
$$\Delta^{k-1}(a_{ij}) = \sum_{m=1}^{k} a_{ij}^{(m)}. $$

We note that $a_{ij}^{(m)}$ and $a_{kl}^{(n)}$ commute for all $i,j,k,l$ and distinct $m$ and $n$.

Let $A^{(m)} = (a_{ij}^{(m)})_{ij} \in (U(\mathfrak{g}_n))^{\otimes k}\otimes Mat_{2n}$. Then the action of the minor $A_{I,J}$ on $W_{\l}$ coincides with the action of $(A^{(1)}+\ldots+A^{(k)})_{I,J}$.

Let $B:= \sum_{m=1}^k A^{(m)}$. Then $\Lambda^{2k+1} B$ is well-defined and acts naturally on $$(U(\mathfrak{g}_n))^{\otimes k}\otimes \Lambda^{2k+1} V^\nn.$$ We have $$B_{I,J} = \langle v_{i_1}\wedge \ldots \wedge v_{i_{2k+1}}| B(v_{j_1}\wedge \ldots\wedge v_{j_{2k+1}})\rangle. $$
Now 
$$
B(v_{j_1}\wedge \ldots\wedge v_{j_{2k+1}}) = \sum_{1\le m_1,\ldots,m_{2k+1}\le k} A^{(m_1)}(v_{j_1})\wedge\ldots\wedge A^{(m_{2k+1})}(v_{j_{2k+1}}).
$$

In each summand at least one of the matrices $A^{(m)}$ occurs thrice. Since matrix elements of distinct $A^{(m)}$ commute, we can move these three to the left. So, up to permutation of indices, each summand is equal to
$$
A^{(m)}v_{j_1}\wedge A^{(m)} v_{j_2}\wedge A^{(m)} v_{j_3}\wedge\ldots = \sum A^{(m)}_{\{i_1,i_2,i_3\},\{j_1,j_2, j_3\}}v_{i_1}\wedge v_{i_2}\wedge v_{i_3} \wedge \ldots
$$

We get that by Lemma \ref{lannsimnontypea}, $A^{(m)}_{\{i_1,i_2, i_3\},\{j_1,j_2. j_3\}} $ acts trivially on $W_{\l}$. Hence, $A_{I,J}\in \mathrm{Ann}(W_{\l})$.

\end{proof}

Now let us take some $\l \in \cp$ and let $d = d(\l)$. For $1\le i\le d$ put 
$$
\mu_i= \l_i - d,
$$
$$
\nu_i= \l'_i. 
$$

Then $V^\nn_\l$ is a submodule in $\mathbb S_\l$ and hence in
$$
\bigotimes_{i=1}^{d} S^{\mu_i}V^\nn\otimes  \bigotimes_{i=1}^{d} \Lambda^{\nu_i}V^\nn 
$$
 Thus, $V^\nn_\l$ is a submodule in
$$
R_d= (S^{\bullet}V^\nn)^{\otimes d}\otimes(\Lambda^{\bullet}V^\nn)^{\otimes d}.
$$

\begin{lemma}\label{lannnontypea}(The analogue of Lemma \ref{lann}).  If $R_d$ is as above, then
$$F^{(2d+1)(d(2d+1)+1)}\mathrm{Ann}(R_{d}) \neq 0$$
\end{lemma}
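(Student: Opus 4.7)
The plan is to run the argument of Lemma \ref{lann} almost verbatim in the orthogonal/symplectic setting. The three ingredients will be (i) the differential-operator description of $a_{ij}$ acting on $R_d$, lifted from the proof of Lemma \ref{lannsimnontypea}; (ii) Corollary \ref{cannnontypea}, which says that for disjoint $(2d+1)$-element subsets $I,J\subset\{1,\dots,n\}$ the minor $A_{I,J}$ annihilates $S^\bullet(V^{\oplus d})$; and (iii) a nilpotency count in a suitable free supercommutative algebra through which $\phi(A_{I,J})$ factors.

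Concretely, I will write $R_d=S^\bullet(V^{\oplus d})\otimes\Lambda^\bullet(V^{\oplus d})$ with even coordinates $x_{i,a}$ and odd coordinates $\xi_{i,a}$ (for $1\le i\le 2n$, $1\le a\le d$), set $y_{k,a}=\partial/\partial x_{n+k,a}$ and $\eta_{k,a}=\partial/\partial \xi_{n+k,a}$, and record that for $i,j\le n$,
$$
\phi(a_{ij})=\sum_{a=1}^{d}\bigl(x_{i,a}y_{j,a}+\epsilon\,x_{j,a}y_{i,a}+\xi_{i,a}\eta_{j,a}+\epsilon\,\xi_{j,a}\eta_{i,a}\bigr).
$$
I will then fix disjoint subsets $I,J\subset\{1,\dots,n\}$ of size $2d+1$. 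Because every index $k\in I\cup J$ lies in $\{1,\dots,n\}$ while the derivations are centered at indices in $n+(I\cup J)\subset\{n+1,\dots,2n\}$, the relevant Weyl commutators $[x_{k,a},y_{l,a'}]$ and Clifford anticommutators $\{\xi_{k,a},\eta_{l,a'}\}$ all vanish on the operators in play. Consequently the subalgebra of $\End(R_d)$ generated by $\{\phi(a_{ij})\}_{i\in I,\,j\in J}$ is supercommutative, and $\phi$ factors through the free supercommutative $\ck$-algebra on the even letters $x_{k,a},y_{k,a}$ and odd letters $\xi_{k,a},\eta_{k,a}$ indexed by $k\in I\cup J$, $a\le d$.

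Two properties of $\phi(A_{I,J})$ in this free algebra will then follow. Setting all odd letters to zero collapses $\phi(A_{I,J})$ to its action on $S^\bullet(V^{\oplus d})$, which vanishes by Corollary \ref{cannnontypea}; hence every monomial of $\phi(A_{I,J})$ has positive odd degree. Moreover, each summand of $\phi(a_{ij})$ has $(\xi,\eta)$-bidegree either $(0,0)$ or $(1,1)$, so every monomial of $\phi(A_{I,J})$ has equal positive $\xi$- and $\eta$-degree. A direct count of the $\xi$-letters available in the free algebra then forces a sufficiently high power $\phi(A_{I,J})^{M}$ to vanish, so the same power $A_{I,J}^{M}$ lies in $\mathrm{Ann}(R_d)$. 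Since $\deg A_{I,J}=2d+1$ in the PBW filtration and the symbol of $A_{I,J}^{M}$ in $\gr U(\g_n)=S(\g_n)$ is a nonzero power of a commutative minor, $A_{I,J}^{M}$ itself is nonzero in $U(\g_n)$, producing the required nonzero element of the prescribed filtered piece of $\mathrm{Ann}(R_d)$.

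The main obstacle, exactly as in the proof of Lemma \ref{lann}, will be the supercommutation claim in step one: one has to verify carefully that the shift-by-$n$ in the differential-operator formulas makes every Weyl/Clifford cross term vanish, and that the $(\xi,\eta)$-bidegree is correctly tracked through the Laplace expansion of $A_{I,J}$. Once this is established, the count of available odd letters yields the stated exponent exactly as in the type-$A$ case.
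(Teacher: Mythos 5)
Your proposal follows the paper's proof essentially step by step: the same differential-operator realization of $a_{ij}$, the same passage to the free supercommutative algebra on the letters $x_{k,a},y_{k,a},\xi_{k,a},\eta_{k,a}$ indexed by $k\in I\cup J$ (using the observation that the shift by $n$ in the derivative indices kills all cross commutators, so that $\{\phi(a_{ij})\}_{i\in I,j\in J}$ supercommute), the same reduction modulo odd letters to invoke Corollary \ref{cannnontypea}, the same tracking of the equal $\xi$- and $\eta$-degrees through the Laplace expansion, and the same pigeonhole on odd letters. The one step you add that the paper leaves implicit is the observation that $A_{I,J}^M\neq 0$ in $U(\g_n)$; your symbol argument in $\gr U(\g_n) = S(\g_n)$ is correct and worth keeping, since for $i\in I$, $j\in J$ with $I,J\subset\{1,\dots,n\}$ disjoint the entries $a_{ij}$ are algebraically independent elements of $S(\g_n)$, so the commutative minor is a nonzero element of an integral domain and its powers do not vanish.

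One issue to flag before you finalize: your last sentence claims the odd-letter count ``yields the stated exponent.'' It does not. The free algebra here has $\xi_{k,a}$ for $k\in I\cup J$ (a set of size $2(2d+1)$) and $a\in\{1,\dots,d\}$, hence $2d(2d+1)$ letters $\xi$ (and the same number of $\eta$'s). Since every monomial of $\phi(A_{I,J})$ has $\xi$-degree and $\eta$-degree at least one, $\phi(A_{I,J})^{M}=0$ whenever $M>2d(2d+1)$, i.e.\ for $M=2d(2d+1)+1$. Combined with $\deg A_{I,J}=2d+1$, the argument produces a nonzero element of $F^{(2d+1)(2d(2d+1)+1)}\mathrm{Ann}(R_d)$ --- this is identical to the type-$A$ formula from Lemma \ref{lann} with $k$ replaced by $d$, as one should expect, and is also exactly what the paper's proof body concludes. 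The exponent $(2d+1)(d(2d+1)+1)$ displayed in the statement of Lemma \ref{lannnontypea} is smaller (with $2d(2d+1)+1$ replaced by $d(2d+1)+1$) and is not established by this counting argument; it appears to be a typographical slip in the paper. Be careful not to assert that you have proved the smaller-index claim; what you prove is the bound with $2d(2d+1)+1$, which is all the paper's proof establishes as well.
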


\begin{proof}

We have $R_d = S^\bullet(V^{\nn\oplus d})\otimes \Lambda^\bullet(V^{\nn\oplus d})$.
 The elements $a_{ij}$ with $i,j\le n$ act on the symmetric algebra  as 
$$
\sum_{a=1}^{d} x_{i,a}\frac{\partial}{\partial x_{(n+j),a}} +\epsilon  x_{j,a}\frac{\partial}{\partial x_{(n+i),a}},
$$
where $x_{i,a}$ are even variables.

Similarly, they act on the exterior algebra as 
$$
\sum_{a=1}^{k} \xi_{i,a}\frac{\partial}{\partial \xi_{(n+j),a}} +\epsilon  \xi_{j,a}\frac{\partial}{\partial \xi_{(n+i),a}},
$$
where $\xi_{i,a}$ are odd variables. 

Let us take some subsets $I=\{i_1,\ldots,i_{2d+1}\},J=\{j_1,\ldots,j_{2d+1}\}$ inside  $\{1,\ldots, n\}$ with $I\cap J=\emptyset$. For any $i,j\le n$ multiplication by $x_{i,a}$ (or $\xi_{i,a}$) commutes with differentiation $\frac{\partial}{\partial x_{(n+j),a}}$ (or $\frac{\partial}{\partial \xi_{(n+j),a}}$). Let us denote $$y_{j,a}= \frac{\partial}{\partial x_{(n+j),a}},$$ $$\eta_{j,a}= \frac{\partial}{\partial \xi_{(n+j),a}}.$$

Then the representation of the subalgebra $U_{I,J}$ of $U(\mathfrak{g}_n)$ generated by $a_{ij}$ with $i\in I$ and $j\in J$ on $R_{d}$ is given by a homomorphism
$$
\phi:U_{I,J} \to \mathbbm{k}[x_{i,a},y_{i,a},\xi_{i,a},\eta_{i,a}],
$$
$$
a_{ij}\mapsto \sum_{a=1}^{d} (x_{i,a} y_{j,a} + \epsilon x_{j,a}y_{i, a}) + \sum_{a=1}^{d} (\xi_{i,a}\eta_{j,a} +\epsilon   \xi_{j,a}\eta_{i,a}),
$$
where the algebra on the right-hand-side is a free supercommutative algebra with even generators $x_{i,a},y_{i,a}$  and odd generators $\xi_{i,a},\eta_{i,a}$, and the indices run over the following sets:$$i\in I\cup  J, a\in \{1,\ldots,d\}.$$

By Corollary \ref{cannnontypea}, $A_{I,J}$ (the corresponding $(2d+1)$ by $(2d+1)$ minor) annihilates $S^\cdot(V^{\oplus d})$. Therefore, $\phi(A_{I,J})$ lies in the nilradical of our algebra (i.e. in the ideal generated by the variables $\xi,\eta$). Moreover, each nonzero monomial in $\phi(A_{I,J})$ must contain equal number of variables $\xi$ and $\eta$, because it is true for $\phi(a_{ij})$. We note that the number of odd variables in our algebra is equal to $2N=4d(2d+1)$. Thus, $\phi(A^{N+1}_{I,J})=0$.

Finally, since $\deg A_{I,J} = 2d+1$ and $A_{I,J}^{2d(2d+1)+1}\in \mathrm{Ann}(R_d)$ we get that
$$
F^{(2d+1)(2d(2d+1)+1)}R_d\neq 0.
$$
\end{proof} 

\begin{corollary}
If $\l^\nn$ is a sequence of partitions such that $d(\l^\nn)$ are bounded, then there exists $N\in\N$ with
$$
F^N\mathrm{Ann}(V^\nn_{\l^\nn})\neq 0
$$
for almost all $n$.
Moreover, the filtered ultraproduct
$$
\prod^F_\cf \mathrm{Ann}(V^\nn_{\l^\nn})
$$
is nonzero and lies in $\Ind\cc_t$.
\end{corollary}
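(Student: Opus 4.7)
The plan is to mimic the type $A$ corollary from Section \ref{sspher} almost verbatim, substituting the orthogonal/symplectic annihilator bound (Lemma \ref{lannnontypea}) for Lemma \ref{lann}, and then invoking Lemma \ref{lwelldefup} to produce an object of $\Ind\cc_t$.

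First I would extract a uniform bound on the diagonal: by hypothesis there exists $D\in\Z_{\geq 0}$ with $d(\l^\nn)\leq D$, hence (by discarding a set of measure zero in $\cf$) we may take $d(\l^\nn)=D$ for almost all $n$. The discussion preceding Lemma \ref{lannnontypea} then embeds $V_{\l^\nn}$ as a subobject of $R_D=(S^\bullet V)^{\otimes D}\otimes(\Lambda^\bullet V)^{\otimes D}$, so $\mathrm{Ann}(R_D)\subset \mathrm{Ann}(V_{\l^\nn})$. Setting
\[
N\coloneqq (2D+1)\bigl(2D(2D+1)+1\bigr),
\]
Lemma \ref{lannnontypea} supplies a nonzero element of $F^N\mathrm{Ann}(R_D)\subset F^N\mathrm{Ann}(V_{\l^\nn})$, proving the first assertion.

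For the second assertion, I would check the hypotheses of Lemma \ref{lwelldefup} applied to the filtered ind-objects $X_n\coloneqq \mathrm{Ann}(V_{\l^\nn})$ with the PBW filtration. By construction $F^k\mathrm{Ann}(V_{\l^\nn})\subset F^kU(\g_n)$, and by Example \ref{expbw} (whose argument carries over verbatim to $\mathfrak{o}_n$ and $\mathfrak{sp}_n$, since $\g_n\subset V\otimes V$ as an object of $\cc_n$) we have $F^kU(\g_n)\in F^k\cc_n$ for the filtration of Example \ref{exfiltr}. Thus $F^k\mathrm{Ann}(V_{\l^\nn})\in F^k\cc_n$. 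Moreover, the multiplicities $[F^kU(\g_n):V_\nu]$ are universally bounded in $n$, so a fortiori the multiplicities in the submodule $F^k\mathrm{Ann}(V_{\l^\nn})$ are bounded by a function $c(\nu,k)$ independent of $n$. Lemma \ref{lwelldefup} therefore yields $\underline{\mathrm{Ann}}(\l)\coloneqq\prod^F_\cf \mathrm{Ann}(V_{\l^\nn})\in\Ind\cc_t$.

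Finally, nonvanishing of the ultraproduct is immediate from the nonvanishing at level $N$: since $F^N\underline{\mathrm{Ann}}(\l)=\prod_\cf F^N\mathrm{Ann}(V_{\l^\nn})$ and each factor is nonzero in $\cc_n$ for almost all $n$, the ultraproduct is nonzero in $\cc_t$. No step here is a genuine obstacle; the only mild subtlety is confirming that the argument of Example \ref{expbw} (that the PBW filtration on $U(\g_t)$ lies in the filtration $F$ on $\cc_t$) applies equally well to $\mathfrak{o}_t$ and $\mathfrak{sp}_t$, which is transparent because $\g_t$ is a subobject of $V\otimes V\in F^1\cc_t$ in both cases.
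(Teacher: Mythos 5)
Your proof is correct and follows essentially the same approach as the paper: invoke Lemma \ref{lannnontypea} to get the degree bound $N=(2D+1)(2D(2D+1)+1)$, then apply Lemma \ref{lwelldefup} using the containment $F^k\mathrm{Ann}(V_{\l^\nn})\subset F^kU(\g_n)$ to show the ultraproduct lies in $\Ind\cc_t$, and read off nonvanishing from the nonzero filtration piece in degree $N$. You fill in a few details the paper leaves implicit (the orthogonal/symplectic adaptation of Example \ref{expbw} and the uniform multiplicity bound), but the argument is the same.
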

\begin{proof}
The first part of the statement is just a consequence of Lemma \ref{lannnontypea}: we can take $d = \max(d(\l^\nn))$ and 
$$
N  = (2d+1)(2d(2d+1)+1).
$$

Now, since for any $l$
$$
F^l \mathrm{Ann}(V^\nn_{\l^\nn})\subset F^l U(\mathfrak{g}_n)
$$
and since, consequently, $(\mathrm{Ann}(V^\nn_\l), F)$ satisfy the conditions of Lemma \ref{lwelldefup},
$$
\underline{\mathrm{Ann}}(\l)= \prod^F_\cf \mathrm{Ann}(V^\nn_{\l^\nn})\in \Ind\cc_t.
$$

Moreover, $F^N\underline{\mathrm{Ann}}(\l) = \prod_\cf F^N\mathrm{Ann}(V^\nn_{\l^\nn}) \neq 0.$
\end{proof}

\begin{remark}
Let $\l$ be as in Corollary \ref{chomosp} and $\l = \prod_\cf \l^\nn$.  Then
$$
\underline \End(\l) = U(\mathfrak{g}_t)/\underline{\mathrm{Ann}}(\l),
$$
where 
$$
\underline{\mathrm{Ann}}(\l) = \prod_\cf^F \mathrm{Ann}(V^\nn_{\l^\nn})
$$
and
$$
\underline \End(\l)= \underline \Hom(\l,\l).
$$
\end{remark}

$$
$$

\end{document}